\tikzset{->-/.style={decoration={markings, mark=at position 0.5 with {\arrow{stealth}}},postaction={decorate}}}
\tikzset{-<-/.style={decoration={markings, mark=at position 0.5 with {\arrow{stealth reversed}}},postaction={decorate}}}
\theoremstyle{plain}
\newtheorem*{maintheorem}{Main Theorem}
\newtheorem{theorem}{Theorem}[section]
\newtheorem{lemma}[theorem]{Lemma}
\newtheorem{corollary}[theorem]{Corollary}
\newtheorem{proposition}[theorem]{Proposition}
\theoremstyle{definition}
\newtheorem{definition}[theorem]{Definition}
\newtheorem{remark}[theorem]{Remark}
\newtheorem{lists}[theorem]{List}
\newtheorem{setup}[theorem]{Setup}
\newtheorem{question}[theorem]{Question}
\newtheorem{example}[theorem]{Example}
\newcommand{\lcm}{\mbox{lcm}}
\newcommand{\A}{\mathcal{A}}
\newcommand{\C}{\mathcal{C}}
\newcommand{\G}{\mathcal{G}}
\newcommand{\T}{\mathcal{T}}
\newcommand{\U}{\mathcal{U}}
\newcommand{\X}{\mathcal{X}}
\newcommand{\Y}{\mathcal{Y}}
\newcommand{\M}{\mathcal{A}}
\newcommand{\set}[1]{\left\{#1\right\}}
\newcommand{\gen}[1]{\left ( #1 \right )}
\newcommand{\facets}[1]{\left\langle #1 \right\rangle}
\newcommand{\LCM}{\mathrm{LCM}}
\newcommand{\qand}{\quad \mbox{and} \quad}
\newcommand{\qif}{\quad \mbox{if} \quad}
\newcommand{\qwhere}{\quad \mbox{where} \quad}
\newcommand{\qfor}{\quad \mbox{for} \quad}
\newcommand{\qforall}{\quad \mbox{for all} \quad}
\newcommand{\qor}{\quad \mbox{or} \quad}
\newcommand{\m}{\mathbf{m}}
\newcommand{\ssm}{\setminus}
\newcommand{\KK}{\mathbb{K}}
\newcommand{\TT}{\mathbb{T}}
\newcommand{\bu}{\mathbf{u}}
\newcommand{\comp}[1]{\overline{#1}}
\begin{document}
\title{Cellular resolutions of monomial ideals and their Artinian reductions}

\author[S. Faridi]{Sara Faridi}
\thanks{Sara Faridi's research is supported by the Natural Sciences and
Engineering Research Council of Canada (NSERC)}
\email{faridi@dal.ca}
\address{Department of Mathematics and Statistics, Dalhousie University, Halifax, Canada}

\author[M. Farrokhi D. G.]{Mohammad Farrokhi D. G.}
\email{m.farrokhi.d.g@gmail.com,\ farrokhi@iasbs.ac.ir}
\address{ Research Center for Basic Sciences and Modern Technologies (RBST), Institute for Advanced Studies in Basic Sciences (IASBS), 
Zanjan 45137-66731, Iran}

\author[R. Ghorbani]{Roghayyeh Ghorbani}
\email{ghorbani.r@iasbs.ac.ir}

\author[A. A. Yazdan Pour]{{Ali Akbar} {Yazdan Pour}}
\email{yazdan@iasbs.ac.ir}
\address{Department of Mathematics, Institute for Advanced Studies in Basic Sciences (IASBS), Zanjan 45137-66731, Iran}

\subjclass[2010]{Primary 13C70, 13D02, 13F55; Secondary 05E40, 05E45}
\keywords{Free resolution, monomial ideal, Artinian reduction,
  discrete Morse theory, level algebra}

\begin{abstract} 
The question we address in this paper is: which monomial ideals have
minimal cellular resolutions, that is, minimal resolutions obtained
from homogenizing the chain maps of CW-complexes? Velasco gave
families of examples of monomial ideals that do not have minimal
cellular resolutions, but those examples have large minimal generating
sets. In this paper, we show that if a monomial ideal has at most four
generators, then the ideal and its (monomial) Artinian reductions have
minimal cellular resolutions. When the ideal is generated by two
monomials, we can give a precise description of the CW-complex
supporting minimal free resolution of the ideal and its Artinian
reduction. Also, in this case, we compute the multigraded Betti
numbers, Cohen-Macaulay type and determine when the corresponding
algebra is a level algebra.
\end{abstract}

\maketitle
\section{Introduction}

The general theme of this paper is to use chain maps of cell complexes
to describe free resolutions of monomial ideals in polynomial
rings. Let $\KK$ be a field and $M$ be an ideal in the polynomial ring
$S=\KK[x_1,\ldots,x_n]$ generated by $q$ monomials. Diana
Taylor~\cite{T} shows that the simplicial chain complex of a simplex
on $q$ vertices can be ``homogenized'' (see~\cite{P} for a description
of homogenization) into a free resolution of $M$. Taylor's resolution
works for any monomial ideal and as a result it is often far from
being minimal. But her insight was further developed by researchers
(see~\cite{BS,P}) to find smaller topological objects, such as
subcomplexes of the simplex or more generally CW-complexes, whose
chain complexes can be homogenized to smaller resolutions for specific
(classes of) ideals.

While every monomial ideal has a Taylor resolution which comes from the chain complex of a simplex, there are monomial ideals whose \emph{minimal} resolutions cannot be obtained from any simplicial or even CW-complexes (\cite{mv}). So a natural question to ask is: what classes of monomial ideals have
minimal cellular resolutions? Or, could one find a cellular resolution for a given class of monomial ideals that is very close to being minimal?

The idea here is to start from the most structured cellular resolution
-- the Taylor resolution -- and systematically reduce the size of the
Taylor complex by deleting redundant faces while ensuring that the
remaining faces still support a resolution. This method of pruning
extra faces is most effectively carried out by tools from discrete
homotopy theory. The specific tool we use in this paper is discrete
Morse theory, which encodes the faces of a cell complex in a graph,
and uses ``acyclic matchings'' to prune this graph, and obtain a
smaller topological object that is homotopy equivalent to the first
one. Discrete Morse theory was developed by Froman~\cite{For} as a
combinatorial counterpart of Morse theory for manifolds, and was
interpreted in terms of matchings in the poset lattice by
Chari~\cite{Char}. Batzies and Welker~\cite{Ba,BW,OW} applied these
methods to cellular free resolutions of monomial ideals; see also
\cite{AFG}.

The premise of this paper is Artinian monomial ideals. Specifically, let 
$$J=(u_1,\ldots,u_r) \qand I=J+({x_1}^{e_1},\ldots,{x_n}^{e_n})$$
where $u_1,\ldots,u_r$ are monomials in $S$. We show that if $r \leq
4$, $I$ and $J$ will both have cellular minimal resolutions. More precisely

\begin{maintheorem}[\cref{t:main11}]
Let $J$ be a monomial ideal with at most four monomial generators in
the polynomial ring $S=\KK[x_1,\ldots,x_n]$ over a field $\KK$, and
$$I=J+({x_1}^{e_1}, \ldots, {x_n}^{e_n})$$
be an Artinian reduction of $J$, where $e_1,\ldots,e_n$ are positive
integers. Then both of $I$ and $J$ have minimal free resolutions
supported on a CW-complex.
\end{maintheorem}

Our investigations of Artinian monomial ideals were inspired by the
work of Alesandroni~\cite{GA}, who looked for characterizations of
ideals with Scarf resolutions.

This paper is organized as follows. In \cref{s:preliminaries}, we
briefly review cellular resolutions, Taylor and Scarf resolutions, and
multigraded Betti numbers of monomial ideals. In \cref{s:discrete
  morse theory}, we discuss how discrete Morse theory leads to a free
resolution of a monomial ideal $M$ using acyclic matchings on a graph
$G_M$ built from the generators of $M$ (\cref{t:BW}). In \cref{s:main
  theorem}, we analyze the local structure of the lcm lattice of a set
of monomials (see \cref{shift} and \cref{p2:int-hyp}) and apply it to
prove that when $J$ has fewer than five generators, both of $I$ and
$J$ have minimal cellular resolutions.

It is worth highlighting that \cref{p2:int-hyp} offers a method to
reduce the scope of search for acyclic matchings to a much smaller
structure, and can be applied as a tool to find Morse matchings for
Artinian reductions of any monomial ideal.

When the ideal $J$ is generate by two monomials, in \cref{s:2-gen} we
describe homogeneous acyclic matchings that produce minimal free
resolutions of $I$ and $J$ via a concrete algorithm
(\cref{t:main2}). Finally, we show in \cref{s:algebraic invariants}
that $\beta_{i,\bu}(I)\in\{0,1\}$ when $I$ is an Artinian reduction of
a monomial ideal with two generators. As a result, we compute the
Cohen-Macaulay type of $S/I$ and determine when $S/I$ is a level
algebra.

\section{Preliminaries}\label{s:preliminaries}
In this section, we will introduce the tools used later in the paper.
\subsection{Simplicial and cell complexes}
A {\bf simplicial complex} $\Delta$ over a set of vertices $V=\{v_1,\cdots,v_n\}$ is a collection of subsets of $V$, with the property that $\{v_i\} \in \Delta$ for all $i$, and if $F       \in \Delta$ then all subsets of $F$ are also in $\Delta$. An element of $\Delta$ is called a {\bf face} of $\Delta$, and the {\bf dimension} of a face $F$ of $\Delta$, denoted by $\dim(F)$, is defined as $|F|-1$, where $|F|$ is the size of the set $F$. The faces of dimensions 0 and 1 are called {\bf vertices} and {\bf edges}, respectively, and $\dim (\varnothing) = -1$. The maximal faces of $\Delta$ under inclusion are called {\bf facets} of $\Delta$. The dimension of the simplicial complex $\Delta$ is the maximal dimension of its facets. 

For a positive integer $q$, a {\bf $q$-simplex} is a simplicial complex on $q$ vertices with exactly one facet of dimension $q-1$; in other words, the simplicial complex $2^{[q]}$ consisting of all subsets of $[q]$.

A topological space is called a {\bf cell} of dimension $d$ if it is homeomorphic to the $d$-dimensional open ball
\[\mathrm{int}({B}^{d})=\set{x=(x_1,\dots,x_d)\in \mathbb{R}^d\colon\quad  \sum_{i=1}^{d}{x_i}^2<1}.\]

\begin{definition}
A Hausdorff space $X$ is a {\bf CW-complex}, if there exists a collection $ X^{*}=\{c_{i} \colon i\in I\}$ of cells such that $X=\bigcup_{i\in I} c_{i},$ and for every cell $c\in X^{*}$ of
dimension $d$, there exists a continuous map 
\[\Phi_c\colon B^d:=\set{x=(x_1,\cdots,x_d)\in \mathbb{R}^d\colon\quad  \sum_{i=1}^{d}{x_i}^2\leqslant1}\rightarrow X\]
such that the restriction of $\Phi_{c}$ on ${\mathrm{int}({B}^{d})}$ is a homeomorphism 
\[\Phi_{c} \vert _{\mathrm{int}({B}^{d})}\colon\mathrm{int}({B}^{d})\xrightarrow{\cong}c.\]
	
A subset $A\subset X$ is closed in $X$ if and only if $A \cap \Phi_c(B^d)$ is closed in $\Phi_c(B^d)$ for all $c\in X^{*}$. For a cell $c\in X^{*}$, we call the map 
\[\Phi_c\colon B^d\rightarrow X\]
the {\bf characteristic map} of $c$ and $\Phi_c(B^d)$ the {\bf closed cell} that belongs to $c$.
 
The collection of cells $X^{*}$ of $X$ is a partially ordered set: for cells $\sigma , \sigma' \in X^{*}$ we set
\[\sigma \preceq \sigma' \Longleftrightarrow \Phi_{\sigma}(B^d)
\subseteq \Phi_{\sigma'}(B^d).\]
  
A cell $\sigma$ is a {\bf facet} of $X$ if $\sigma$ is maximal with respect to the above partial order on $X^{*}$.  A CW-complex is also referred to as a {\bf cell complex}.
\end{definition}
 
\subsection{Simplicial and cellular resolutions}
Let $\KK$ be a field and $M$ be a homogeneous ideal in the polynomial ring $S=\KK[x_1,\ldots,x_n]$.  A free resolution of $M$ is an exact sequence of free modules 
\[0 \longrightarrow S^{c_p} \longrightarrow S^{c_{p-1}} \longrightarrow \cdots \longrightarrow S^{c_0} \longrightarrow M \longrightarrow 0,\]
where each $S^{c_i}$ denotes a free $S$-module of rank $c_i$. A free resolution with the smallest possible sequence of ranks $c_0,\ldots,c_p$ (and smallest $p$) is called a {\bf minimal free  resolution} of $M$, and is known to be unique up to isomorphism of complexes. The minimal ranks $c_1,\ldots,c_p$ are then denoted by $\beta_0,\ldots,\beta_p$ and are called the {\bf Betti numbers} of $M$. The minimal length $p$ of a free resolution of $M$ is called the {\bf projective dimension} of $M$. For further details see \cite{P}.

Now suppose $M$ is generated by monomials $m_1,\ldots,m_q$ in $S$. Taylor~\cite{T} shows that the simplicial chain complex of an $q$-simplex can be ``homogenized'' to produce a (most often non-minimal) free resolution of $M$. This process is done by labeling each vertex of the $q$-simplex with one of the monomials $m_1,\ldots,m_q$, and then each face is labeled by the $\lcm$ of its
vertex labels. This labeled $q$-simplex is called the {\bf Taylor  complex} of $M$ and is denoted by $\TT (M)$. The monomial labels of $\TT (M)$ belong to $\LCM(M)$ -- the {\bf lcm lattice} of $M$ -- which is the set of all monomial labels of $\TT(M)$ partially ordered by divisibility. The homogenization of the simplicial chain maps is done using the monomial labels of each face. The resulting free resolution is a {\bf multigraded} resolution, where in each homological degree
$i$, the free module $S^{c_i}$ is written as the direct sum of cyclic $S$-modules
\[S(\m_1)^{c_{i,\m_1}}\oplus\cdots\oplus S(\m_{v_i})^{c_{i,\m_{v_i}}},\]
where $\m_1,\ldots,\m_{v_i}$ are the monomial labels of the $i$-dimensional faces of the Taylor
complex, and $c_i=c_{i,\m_1}+ \cdots+ c_{i,\m_{v_i}}$.

\begin{example}\label{ex:0}
Let $S=\KK[x_1, x_2, x_3]$ be a polynomial ring and $M=\gen{{x_1}{x_2}, {x_1}{x_3}}$. The labeled $2$-simplex 
\begin{center}
\begin{tikzpicture}
\node [draw, circle, fill=white, inner sep=1pt, label=left:{\tiny{$x_1x_2$}}] (1) at (0,1.5) {};
\node [draw, circle, fill=white, inner sep=1pt, label=right:{\tiny{$x_1x_3$}}] (2) at (2,1.5) {};
\node [label=above:{\tiny{$x_1x_2x_3$}}] (3) at (1,1.3) {};
\draw (1)--(2);
\end{tikzpicture}
\end{center}
produces the Taylor resolution of $M$ as follows	
\[0\longrightarrow S(x_1x_2x_3) \longrightarrow S(x_1x_2)\oplus S(x_1x_3) \longrightarrow M \longrightarrow 0.\]
\end{example}

Every monomial ideal has a multigraded \emph{minimal} free resolution contained in the Taylor resolution. More precisely, if $\mathbb{F}$ is a minimal free resolution of $M$, the free module $S^{\beta_i}$ in homological degree $i$ of $\mathbb{F}$ can be refined as a direct sum of
multigraded free modules
\[S(\m_1)^{\beta_{i,\m_1}}\oplus\cdots\oplus
S(\m_{v_i})^{\beta_{i,\m_{v_i}}},\]
where $\m_1,\ldots,\m_{v_i}$ are the monomial labels of the $i$-dimensional faces of the Taylor
complex, and for $i\geq 0$ and $\m \in \LCM(M)$, the number
\[\beta_{i,\m}(M)=\text{number of copies of } S(\m) \text{ in the $i$-th homological degree of }\mathbb{F}\]
is the $i$-th {\bf multigraded Betti number} of $M$ in multidegree $\m$. In particular, the Betti numbers of $M$ are
\[\beta_i(M)=\sum_{\m\in \LCM(M)} \beta_{i,\m}(M)=\sum_{j=1}^{v_i}\beta_{i,\m_j}(M).\]
For more details on multigraded resolutions, we refer the reader to~\cite{MS,P}.

Taylor's homogenization of the chain maps of a simplex can be applied, in the same fashion, to any simplicial or cell complex~(\cite{BPS,BS,P}), though one does not always get a resolution. When $\Delta$ is a cell complex on $q$ vertices whose cellular chain maps can be homogenized to a (minimal) free resolution of an ideal $M$ generated by $r$ monomials, we say that $M$ has a {\bf (minimal) free resolution supported on $\Delta$}. If $\Delta$ supports a minimal free resolution of $M$, then 
\[\beta_{i,\m}(M)=\text{ number of $i$-faces of $\Delta$ labeled with the monomial } \m.\]

In \cref{ex:0} one can verify with the computer algebra software Macaulay2~\cite{M2} that the Taylor resolution is indeed a minimal free resolution of $M=(x_1x_2, x_1x_3)$. We therefore have multigraded Betti numbers
\[\beta_{0,x_1x_2}(M)=\beta_{0,x_1x_3}(M)=1=\beta_{1,x_1x_2x_3}(M)\]
so that the total Betti numbers are 
\[\beta_0(M)=2,\ \beta_1(M)=1.\]

\begin{example}\label{ex:1}
Let $S=\KK[x_1, x_2, x_3]$ be a polynomial ring and
\[J=\gen{{x_1}{x_2}, {x_1}{x_3}}, \qand I=J+\gen{x_1^{2}, x_2^{2}, x_3^{2}}\]
be monomial ideals in $S$. The Taylor complex of $I$ is a simplex of dimension $4$, and the Taylor
resolution of $I$ is:
\begin{align*}
	0\rightarrow 
	\begin{smallmatrix}
	S(x_{1}^2x_{2}^2x_{3}^2)
	\end{smallmatrix}\rightarrow \begin{smallmatrix}
	S^2(x_{1}^2x_{2}^2x_{3}^2)\\\oplus \\ 
	S(x_{1}^2x_{2}^2x_{3}) \\\oplus\\ S(x_{1}^2x_{2}x_{3}^2)\\\oplus\\ S(x_{1}x_{2}^2x_{3}^2)
	\end{smallmatrix}\rightarrow \begin{smallmatrix}
	S(x_{1}^2x_{2}^2x_{3}^2)\\\oplus\\ S(x_{1}^2x_{2}^2x_{3})\oplus S(x_{1}^2x_{2}x_{3}^2)\\\oplus\\ S^2(x_{1}x_{2}^2x_{3}^2)\oplus
	S(x_{1}^2x_{2}^2)\\\oplus\\
	S(x_{1}^2x_{3}^2)\oplus
	S(x_{1}^2x_{2}x_{3})\\\oplus\\ S(x_{1}x_{2}^2x_{3})\oplus S(x_{1}x_{2}x_{3}^2)
	\end{smallmatrix}\rightarrow \begin{smallmatrix}
	S(x_{1}^2x_{2}^2)\oplus S(x_{1}^2x_{3}^2)\\\oplus\\
	S(x_{2}^2x_{3}^2)\oplus S(x_{1}x_{2}^2x_{3})\\\oplus\\ 
	S(x_{1}x_{2}x_{3}^2)\oplus
	S(x_{1}^2x_{2})\\\oplus \\
	S(x_{1}^2x_{3})\oplus S(x_{1}x_{2}^2)\\\oplus\\ S(x_{1}x_{3}^2)\oplus 
	S(x_{1}x_{2}x_{3})
	\end{smallmatrix}\rightarrow \begin{smallmatrix}
	S(x_{1}^2)\\\oplus\\
	S(x_{2}^2)\\\oplus\\ 
	S(x_{3}^2)\\\oplus\\
	S(x_{1}x_{2})\\\oplus\\
	S(x_{1}x_{3})
	\end{smallmatrix}\rightarrow I \to 0.
\end{align*}
The minimal multigraded free resolution of  $I$ is:
	\begin{align*}
	0\rightarrow 
	\begin{smallmatrix}
	S(x_{1}x_{2}^2x_{3}^2)\\\oplus \\S(x_{1}^2x_{2}x_{3})
	\end{smallmatrix}\rightarrow \begin{smallmatrix}
	S(x_{2}^2x_{3}^2)\oplus S(x_{1}^2x_{2})\\\oplus\\
	S(x_{1}^2x_{3})\oplus 
	S(x_{1}x_{2}^2)\\\oplus\\
	S(x_{1}x_{3}^2)\oplus 
	S(x_{1}x_{2}x_{3})
	\end{smallmatrix}\rightarrow \begin{smallmatrix}
	S(x_{1}^2)\oplus
	S(x_{2}^2)\oplus 
	S(x_{3}^2)\\\oplus\\
	S(x_{1}x_{2})\oplus
	S(x_{1}x_{3})
	\end{smallmatrix}\rightarrow I \to 0.
\end{align*}
	
Observe that the Taylor resolution of $I$ is much larger than its minimal free resolution. A natural question is how eliminate the extra summands from the Taylor resolution to get (close) to the minimal multigraded free resolution of $I$. Is there a topological object supporting the minimal resolution? It is not difficult to see that there is no simplicial complex supporting a minimal free resolution of $I$. We will show later in this paper (\cref{exp11}) that $I$ has a minimal free resolution that is supported on the cell complex below.

\begin{center}
\begin{tikzpicture}
\node [draw, circle, fill=white, inner sep=1pt, label=below:{\tiny{$x_2^2$}}] (00) at (0,0) {};
\node [draw, circle, fill=white, inner sep=1pt, label=below:{\tiny{$x_3^2$}}] (20) at (2,0) {};
\node [draw, circle, fill=white, inner sep=1pt, label=left:{\tiny{$x_1x_2$}}] (01) at (0,1.5) {};
\node [draw, circle, fill=white, inner sep=1pt, label=right:{\tiny{$x_1x_3$}}] (21) at (2,1.5) {};
\node [draw, circle, fill=white, inner sep=1pt, label=above:{\tiny{$x_1^2$}}] (12) at (1,2.5) {};
\draw (00)--(20);
\draw (00)--(01);
\draw (20)--(21);
\draw (01)--(21);
\draw (01)--(12);
\draw (21)--(12);
\fill[fill opacity=0.2] (0,0)--(2,0)--(2,1.5)--(0,1.5)-- cycle;
\fill[fill opacity=0.4] (0,1.5)--(2,1.5)--(1,2.5)-- cycle;
\end{tikzpicture}
\end{center}
\end{example}

\cref{ex:1} is the motivating example for this paper. Starting from an (Artinian) monomial ideal $M$, we asked if it is possible to find a topological object supporting the minimal free resolution of
$M$. The natural place to look is the Taylor resolution: how to eliminate the extra summands in each homological degree? These extra summands correspond, in fact, to faces of the Taylor complex that share a label with a subface. An extreme action would be to delete all faces of the Taylor complex that share a label with any other face. The resulting subcomplex of the Taylor complex is the well known \emph{Scarf Complex} ~(\cite{PS}).

\begin{definition}
The {\bf Scarf complex} of a monomial ideal $M$ is a simplicial subcomplex of the Taylor complex of $M$ that is given by the set of those faces $\sigma \in \TT(M)$ such that there is no other face $\tau \in \TT(M)$ with $\tau \neq\sigma$ and $\lcm\ \tau=\lcm \ \sigma$.
\end{definition}

Even though the Scarf complex of a monomial ideal is often too small to support a resolution, but its monomial labels appear in any multigraded resolution of the ideal (see~\cite{M} for a nice overview of simplicial resolutions). It must also be noted that there are classes of ideals with no
cellular minimal resolutions~\cite{mv}.

Upon realizing that an Artinian monomial ideal may not have a simplicial minimal resolution at all, we turned to Discrete Morse Theory: starting from the face poset of the Taylor complex of $M$ and
eliminating faces systematically, we could prove that the cell complex in \cref{ex:1} does in fact support a minimal free resolution of $M$.

The next section is devoted to introducing the main tools of Discrete Morse Theory that are needed for our purposes.

\section{Homogeneous Acyclic Matchings and Discrete Morse Theory}\label{s:discrete morse theory}
For any integer $q\geq1$, let $G_q$ be the directed graph with vertex
and edge sets
\begin{align*}
V(G_q)=&2^{[q]}, \\
E(G_q)= &\{(T,T') \colon T,T' \in V(G_q), \ |T|=|T'|+1 \text{ and } T' \subseteq T \}. 
\end{align*}

The directed graph $G_q$ can be visualized as a directed hypercube~\cite[p. 33]{GR}. Let $\M$ be a {\bf matching} in $G_q$, that is, a subset of $E(G_q)$ where no two edges in $\M$ share a vertex. Let $G_q^{\M}$ be the directed graph on $V(G_q)$ with edge set 
\[E(G_q^\M)=(E(G_q)\ssm \M) \cup \{(T',T) \colon\; (T,T') \in \M\}.\]
An element of $V(G_q) \ssm V(\M)$ is called an {\bf $\M$-critical vertex} of $G_q$.

The matching $\M$ is {\bf acyclic} in $G_q$ if $G_q^\M$ is an acyclic directed graph, or equivalently, the induced subgraph $G_q^\M[V(\M)]$ is acyclic. For a multiset
$\U=\{m_1,\ldots,m_q\}$ of non-trivial monomials in the polynomial ring $S$ over a field $\KK$, we let $G_\U$ denote the directed graph $G_q$ where every vertex $T$ is labeled with the monomial $\m_T=\lcm(m_j \colon \; j\in T)$. By convention we set $\m_{\varnothing}=1$.

For a monomial ideal $M$ in $S$, by $G_M$ we mean $G_{\G(M)}$ where $\G(M)$ is the (unique) minimal monomial generating set of $M$. Note that the vertices of $G_M$ and their monomial labels correspond to the faces of the Taylor complex $\TT(M)$ and their monomial labels. A matching $\M$ of $G_M$ is called {\bf homogeneous} if
\[\m_{T}=\m_{T'} \quad \mbox{for every} \quad (T,T')\in\M.\]
When $\M$ is a homogeneous acyclic matching of $G_M$, Batzies and Welker (see \cref{t:BW} below) show that the $\M$-critical vertices $T$ of $G_M$ are in one-to-one correspondence with cells $\sigma_T$ of a CW-complex $X_\M$ which supports a free resolution of $J$. For any two $\M$-critical vertices $T$ and $T'$ of $G_M$ with $|T|=|T'|+1$ consider the partial order $\preceq$ on $V(G_M)$ as follows:
\begin{equation}\label{directed path}
  \sigma_{T'} \preceq \sigma_T
\iff \begin{cases}
  T' \subseteq T \qor \\\\
  \begin{minipage}{8cm}
  there exists a directed path from $T''$ to $T'$ in $G_M^\M$ \\for some $T'' \subseteq T$ with $|T''|=|T'|$.
  \end{minipage}
\end{cases}
\end{equation}

\begin{theorem}[Batzies-Welker~\cite{BW}]\label{t:BW}
Let $M$ be a monomial ideal in the polynomial ring $S=\KK[x_1,\ldots,x_n]$ over a field $\KK$. If $\M$ is a homogeneous acyclic matching on $G_\M$, then there exists a CW-complex $X_\M$ which supports a multigraded free resolution of $M$.  The $i$-cells $\sigma_T$ of $X_\M$ are in one-to-one correspondence with the $\M$-critical vertices $T$ of $G_M$ of cardinality $i+1$.

The resolution supported on $X_\M$ is minimal if for any two $\M$-critical vertices $T'$ and $T$ of $G_M$ with $|T'|= |T|-1$, the assumption $\sigma_{T'} \preceq \sigma_T$ implies $\m_T \neq \m_{T'}$.
\end{theorem}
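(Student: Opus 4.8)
The plan is to follow the discrete Morse theory argument of Batzies and Welker: collapse the Taylor complex $\TT(M)$ along the matching $\M$ and then homogenize. Recall that $\TT(M)$ is the full simplex $2^{[q]}$ with $q=|\G(M)|$, and that its augmented cellular chain complex, homogenized by the labels $\m_T=\lcm(m_j\colon j\in T)$, is the Taylor resolution of $M$, an acyclic complex of free $S$-modules. Its face poset is exactly $V(G_q)$, with covering relations recorded by $E(G_q)$. Given the homogeneous acyclic matching $\M$, discrete Morse theory produces the \emph{Morse complex}: a complex of free $S$-modules with one generator for each $\M$-critical vertex $T$, placed in homological degree $|T|-1$, together with an explicit chain homotopy equivalence to the Taylor complex, whose differential is a signed count of gradient paths in $G_q^\M$. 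This already gives the asserted bijection between $i$-cells and $\M$-critical vertices of cardinality $i+1$.

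The substantive step is the topological realization. Batzies and Welker show that an acyclic matching on the face poset of a CW-complex can be realized geometrically, collapsing the complex onto a CW-complex $X_\M$ whose cells are exactly the $\sigma_T$ for critical $T$, attached according to the partial order $\preceq$ of \eqref{directed path} via attaching maps read off from the gradient paths, and whose cellular chain complex is the Morse complex. Homogeneity of $\M$ makes the collapse compatible with the $\mathbb{Z}^n$-grading: each $\sigma_T$ carries a well-defined label $\m_T$ and the homogenized cellular differential of $X_\M$ has monomial entries. To check that the homogenized complex is a resolution I would argue one multidegree at a time: for $\m\in\LCM(M)$, homogeneity forces $\M$ to restrict to an acyclic matching on the subsimplex of $\TT(M)$ spanned by $\{j\colon m_j\mid\m\}$, and since that subsimplex is contractible, the corresponding subcomplex of $X_\M$ has trivial reduced homology; by the Bayer--Peeva--Sturmfels criterion for cellular resolutions this yields exactness. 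Hence $X_\M$ supports a multigraded free resolution of $M$.

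For minimality I would examine the entries of the homogenized cellular differential of $X_\M$: the entry from $\sigma_T$ to $\sigma_{T'}$ with $|T|=|T'|+1$ can be nonzero only when $\sigma_{T'}\preceq\sigma_T$, in which case $\m_{T'}$ divides $\m_T$ and the entry equals an integer times the monomial $\m_T/\m_{T'}$. Such an entry lies in the graded maximal ideal $(x_1,\ldots,x_n)$ precisely when $\m_T\neq\m_{T'}$, and a graded free resolution is minimal if and only if every differential entry lies there, which is exactly the stated implication. I expect the main obstacle to be the realization step: producing an honest CW-complex with continuous attaching maps, rather than merely an abstract Morse chain complex, and verifying the degreewise acyclicity of its homogenization. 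This is where acyclicity and homogeneity are genuinely used; the gradient-path bookkeeping and the minimality computation are routine once the realization is granted.
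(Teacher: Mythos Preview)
The paper does not prove this theorem; it is quoted from Batzies and Welker~\cite{BW} and used as a black box throughout. So there is no ``paper's own proof'' to compare against.

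Your sketch is a faithful outline of the original Batzies--Welker argument: start from the Taylor complex, apply discrete Morse theory via the acyclic matching to obtain a homotopy-equivalent CW-complex whose cells are indexed by critical vertices, observe that homogeneity makes the collapse compatible with the multigrading, and read off minimality from whether the differential entries $\m_T/\m_{T'}$ are units. One small remark: you propose to verify exactness of the homogenized complex via the Bayer--Peeva--Sturmfels criterion, checking acyclicity of each $X_{\M,\leq\m}$. That works, but Batzies and Welker obtain exactness more directly: the algebraic Morse construction yields an explicit chain homotopy equivalence between the Taylor resolution and the Morse complex over $S$ (not just over $\KK$), so exactness is inherited immediately without passing through the degreewise topological criterion. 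Your route is correct but slightly longer; the realization step you flag as the main obstacle is indeed the heart of~\cite{BW}.
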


\subsection{The Main Question}
The big general question we are concerned with is the following: given a monomial ideal $I$, how close to a minimal free resolution of $I$ can we get using cellular resolutions?  For example, it is known that all monomial ideals of projective dimension~$1$ have minimal cellular resolutions supported on graphs~\cite{FH}, and powers of square-free monomial ideals of projective dimension $\leq 1$ have minimal cellular resolutions supported on hypercubes~\cite{CEFMMSS1,CEFMMSS2}. On the
other hand, in~\cite{mv} Velasco presented a family of monomial ideals, including a $23$-generated monomial ideal in $\KK[x_1,\ldots,x_{284}]$, none of which have minimal cellular resolutions. What can we say about monomial ideals with fewer generators? In light of \cref{t:BW}, to find minimal cellular resolutions of (Artinian reductions of) monomial ideals, we consider the following question:

\begin{question}[{\bf BW-matchings}]\label{q:main}
Let $u_1,\ldots,u_r$ be monomials in the polynomial ring $S=\KK[x_1,\ldots,x_n]$, where $\KK$ is a field, and 
\[I=J+\gen{{x_1}^{e_1},\ldots, {x_n}^{e_n}}\]
be an Artinian reduction of $J=\gen{u_1, \ldots,u_r}$. Under which conditions, we can find (possibly by an algorithm) a homogeneous acyclic matching $\M$ in $G_J$ (resp. $G_I$) such that for any two $\M$-critical vertices $T,T' \in V(G_J^\M)$ (resp. $T,T' \in V(G_I^\M)$) $\m_T \neq \m_{T'}$ whenever $|T|=|T'|+1$ and $\sigma_{T'} \preceq \sigma_T$?
\end{question}

We call a matching $\M$ satisfying the requirement of Question~\ref{q:main} for an ideal $J$ a {\bf Batzies-Welker  matching}, or simply a {\bf BW-matching} of $G_J$. If a monomial ideal $M$ has a BW-matching, then \cref{t:BW} implies that $M$ has a minimal cellular resolution.

In the remainder of this paper, we give a positive answer to \cref{q:main} when $r\leq 4$ (\cref{t:main11}). Moreover, when $r\leq 2$ we are able to find an explicit BW-matching of $G_I$, which gives us a concrete description of the CW-complex supporting a minimal free resolution of $I$ (\cref{t:main2}). The following lemma is needed in our later discussions.

\begin{lemma}[{\bf Cycles arising from a homogeneous matching}]\label{l:cycle} 
Let $\U$ be a multiset of non-trivial monomials and $\M$ be a homogeneous matching in $G_\U$. Suppose $\C$ is a cycle in $G_\U^\M$. Then
\begin{itemize}
\item[(i)] $|\U| \geq 3$;
\item[(ii)]  there exists an integer $t \in \{2,\ldots, |\U|-1\}$ such that  for every vertex $T \in V(\C)$ either $|T|=t$ or $|T|=t+1$;
\item[(iii)] $\C$ must have at least six edges;
\item[(iv)] any two vertices $T$ and $T'$ of $\C$ have the same monomial label $\m_T=\m_{T'}$.
\end{itemize}
\end{lemma}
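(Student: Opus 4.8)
The plan is to analyze the structure of a cycle $\C$ in $G_\U^\M$ by exploiting the defining property of $G_q^\M$: the only edges in $G_\U^\M$ whose direction has been reversed relative to $G_\U$ are the matching edges, which point ``upward'' (from $T'$ to $T$ with $|T|=|T'|+1$), while all non-matching edges still point ``downward'' (decreasing cardinality by one). Consequently, as one traverses $\C$, each step changes $|T|$ by exactly $\pm1$, and a $+1$ step is always a matching edge. Since $\M$ is a matching, no vertex is incident to two matching edges, so in $\C$ the up-steps and down-steps must alternate: every up-step is immediately followed by a down-step (otherwise two matching edges would meet at a vertex). This already forces $\C$ to have even length and, since the cardinalities return to their starting value after a full traversal, the number of up-steps equals the number of down-steps. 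Moreover, alternation of $\pm1$ steps means all vertices of $\C$ lie in just two consecutive cardinality levels, say $t$ and $t+1$; this will give part~(ii), with $2\le t$ and $t+1\le|\U|-1$ coming from the fact that neither $\varnothing$ (label $1$, which cannot equal any nontrivial monomial label) nor the top vertex $[q]$ can sit on a homogeneous matching edge, hence cannot be a vertex of $\C$ — forcing $t\ge 2$ and $t+1\le q-1$, which in turn needs $|\U|=q\ge 3$, giving part~(i).

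For part~(iv), I would use homogeneity directly: along a matching edge $(T,T')\in\M$ the labels agree by definition of homogeneous matching, $\m_T=\m_{T'}$; along a non-matching downward edge $(T,T')$ with $T'\subseteq T$ we always have $\m_{T'}\mid\m_T$. Traversing the cycle once and returning to the start, the divisibility relations accumulated along the down-steps must be ``undone'' — but divisibility only ever goes one way along down-edges and is an equality along up-edges, so if any down-step were a strict divisibility $\m_{T'}\mid\m_T$ with $\m_{T'}\ne\m_T$, then going around the cycle would produce a monomial strictly dividing itself, a contradiction. Hence every down-edge of $\C$ is also label-preserving, and since $\C$ is connected as a walk, all vertices of $\C$ share one common label. (Equivalently: the labels are non-increasing along each down-step and constant along each up-step as we go around, and they must return to the start, so they are constant throughout.)

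For part~(iii), the bound of six edges: we already know $\C$ alternates up/down, so it has $2k$ edges for some $k\ge1$, i.e.\ $k$ distinct ``bottom'' vertices at level $t$ and $k$ distinct ``top'' vertices at level $t+1$, with consecutive tops and bottoms related by inclusion. I need to rule out $k=1$ and $k=2$. The case $k=1$ is a $2$-cycle on an edge and its reverse, which cannot occur in any $G_q^\M$ by construction (reversing a single matched edge creates no $2$-cycle since the other direction is not present). For $k=2$: a $4$-cycle would consist of bottoms $A,B$ at level $t$ and tops $C,D$ at level $t+1$ with $A,B\subset C$ and $A,B\subset D$ (suitably arranged around the cycle). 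Then $A\cup B\subseteq C\cap D$; since $|A|=|B|=t$ and $A\ne B$ we get $|A\cup B|\ge t+1$, and since $|C|=|D|=t+1$ this forces $C=D=A\cup B$, contradicting $C\ne D$. So $k\ge3$ and $\C$ has at least six edges.

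\textbf{Main obstacle.} The routine combinatorial bookkeeping (parities, alternation, the level being a single value $t$) is straightforward once one fixes the orientation conventions of $G_\U^\M$. The step requiring the most care is making the alternation argument airtight — precisely, showing that a matching edge cannot be immediately followed (in the cycle's direction) by another matching edge, which is where the ``no two edges of $\M$ share a vertex'' hypothesis enters — and then correctly pinning down the boundary inequalities $2\le t\le |\U|-2$ from the exclusion of $\varnothing$ and $[q]$, since those forbidden vertices are exactly the ones whose labels ($1$ and $\m_{[q]}$, with $1$ certainly not appearing on a nontrivial homogeneous edge) prevent them from lying on $\C$. Part~(iii)'s $k=2$ exclusion via the $|A\cup B|$ counting is the other place a clean argument is needed, but it is short.
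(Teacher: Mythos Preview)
Your overall approach matches the paper's: derive alternation of up/down steps from the matching condition, obtain~(iv) from the divisibility chain around the cycle, and handle~(iii) by ruling out cycles of length $2$ and $4$. Two slips in your treatment of~(ii) deserve correction.

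First, the assertion that $[q]$ ``cannot sit on a homogeneous matching edge'' is false: whenever some $m_j$ divides $\lcm(m_i:i\ne j)$, the pair $([q],[q]\setminus\{j\})$ is a legitimate homogeneous edge. You do not actually need this---the lemma only claims $t\le|\U|-1$, i.e.\ $t+1\le q$, which is immediate since every subset of $[q]$ has size at most $q$.

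Second, excluding only $\varnothing$ from $\C$ yields $t\ge1$, not $t\ge2$; your stated conclusion does not follow from your stated reason. The paper argues instead that no vertex of size $1$ can lie on $\C$, since the down arrow from such a vertex would terminate at $\varnothing$ with label~$1$; you need an argument at the level of singletons, not at $\varnothing$ itself.

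For~(iii), your exclusion of the $4$-cycle via $|A\cup B|\ge t+1\Rightarrow C=D=A\cup B$ is a different and tidier route than the paper's, which tracks specific elements $a,b,c$ through the four vertices to derive a containment contradiction.
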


\begin{proof}
If $|\U|=1$ or $|\U|=2$ then we have no directed cycle $\C$ in $G_\U^\M$, so $|\U| \geq 3$. Since $\C$ is a directed cycle, if we start from any vertex $T$ of $\C$, we can follow a directed path in $\C$ bringing us back to $T$:
\[T=T_0 \rightarrow T_1 \rightarrow \cdots \rightarrow T_q=T.\]
So the only way for $\C$ to be a cycle is that it consists of a sequence of alternating ``up'' arrows (arrows in the matching $\M$) and ``down'' arrows (arrows outside $\M$) from $T_0$ to $T_q$.  Since $\M$ is a matching, no vertex can be part of two consecutive ``up'' arrows. Moreover, since $\M$ is homogeneous, every vertex in $T$ of $\M$ satisfies $|T|>1$; otherwise, $|T|=1$ and the down arrow from $T$ has the trivial monomial $1$ at its end, a contradiction. Therefore $\C$ is one of the following two sequences
\begin{equation}\label{eq:cycles}
\begin{aligned}
\begin{tiny}
\xymatrix@=2em{
&T_1\ar[dr]&&\cdots\ar[dr]&\\
T=T_0\ar[ur]&&T_2\ar[ur]&&T_q=T
}
\hspace{1cm}
\xymatrix@=2em{
T=T_0\ar[dr]&&T_2\ar[dr]&&T_q=T\\
&T_1\ar[ur]&&\cdots\ar[ur]&
}
\end{tiny}
\end{aligned}
\end{equation}

So $\C$ must have an even cardinality of edges, and if we set $t=|T|$ in the first scenario, and $t=|T|-1$ in the second, then every vertex of $\C$ has size $t$ or $t+1$. 

From \eqref{eq:cycles} it is also clear that a cycle $\C$ of two edges is not possible. So $\C$ must
contain one of the following two sequence of edges

\begin{center}
\begin{tabular}{ccccccc}
\begin{tikzpicture}
\node [draw, circle, fill=white, inner sep=1pt, label=below:{\tiny $T\ssm\{a\}$}] (00) at (0,0) {};		
\node [draw, circle, fill=white, inner sep=1pt, label=below:{\tiny $T\ssm \{b\}$}] (30) at (2,0) {};
\node [draw, circle, fill=white, inner sep=1pt, label=above:{\tiny $T$}] (01) at (0,1) {};
\node [draw, circle, fill=white, inner sep=1pt, label=above:{\tiny $(T\cup \{c\})\ssm\{b\}$}] (31) at (2,1) {};
\draw [ ->-] (00)--(01);
\draw [ ->-] (30)--(31);	
\draw [->-] (01)--(30);
\end{tikzpicture}&&&&
\begin{tikzpicture}
\node [draw, circle, fill=white, inner sep=1pt, label=below:{\tiny $T$}] (00) at (0,0) {};		
\node [draw, circle, fill=white, inner sep=1pt, label=below:{\tiny $(T\cup \{a\})\ssm \{b\}$}] (30) at (2,0) {};
\node [draw, circle, fill=white, inner sep=1pt, label=above:{\tiny $T\cup \{a\}$}] (01) at (0,1) {};
\node [draw, circle, fill=white, inner sep=1pt, label=above:{\tiny $(T\cup \{a,c\})\ssm\{b\}$}] (31) at (2,1) {};
\draw [ ->-] (00)--(01);
\draw [ ->-] (30)--(31);	
\draw [->-] (01)--(30);
\end{tikzpicture}
\end{tabular}
\end{center}	
for distinct elements $a, b, c$. But then $b \in T \ssm \{ a\} \subseteq T$ and so 
\[T\ssm\{a\}\nsubseteq(T\cup \{c\})\ssm\{b\} \qand T\nsubseteq(T\cup  \{a,c\})\ssm\{b\}.\]
Accordingly, in both cases, the cycles cannot have length four otherwise we should have the thick edges as follows:

\begin{center}
\begin{tabular}{ccccccc}
\begin{tikzpicture}
\node [draw, circle, fill=white, inner sep=1pt, label=below:{\tiny $T\ssm\{a\}$}] (00) at (0,0) {};		
 \node [draw, circle, fill=white, inner sep=1pt, label=below:{\tiny $T\ssm \{b\}$}] (30) at (2,0) {};
\node [draw, circle, fill=white, inner sep=1pt, label=above:{\tiny $T$}] (01) at (0,1) {};
\node [draw, circle, fill=white, inner sep=1pt, label=above:{\tiny $(T\cup \{c\})\ssm\{b\}$}] (31) at (2,1) {};
\draw [ ->-] (00)--(01);
\draw [ ->-] (30)--(31);	
\draw [->-] (01)--(30);
\draw [color=red, line width=2pt, ->-] (31)--(00);
\end{tikzpicture}&&&&
\begin{tikzpicture}
\node [draw, circle, fill=white, inner sep=1pt, label=below:{\tiny $T$}] (00) at (0,0) {};		
\node [draw, circle, fill=white, inner sep=1pt, label=below:{\tiny $(T\cup \{a\})\ssm \{b\}$}] (30) at (2,0) {};
\node [draw, circle, fill=white, inner sep=1pt, label=above:{\tiny $T\cup \{a\}$}] (01) at (0,1) {};
\node [draw, circle, fill=white, inner sep=1pt, label=above:{\tiny $(T\cup \{a,c\})\ssm\{b\}$}] (31) at (2,1) {};
\draw [ ->-] (00)--(01);
\draw [ ->-] (30)--(31);	
\draw [->-] (01)--(30);
\draw [color=red, line width=2pt, ->-] (31)--(00);
\end{tikzpicture}
\end{tabular}
\end{center}	
Thus $\C$ must have at least $6$ edges. Therefore $\C$ is of the form

\begin{center}
\begin{tabular}{ccccccc}
\begin{tikzpicture}
\node [draw, circle, fill=white, inner sep=1pt, label=below:{\tiny $T_{1}=T_{s+1}$}] (00) at (0,0) {};
\node [draw, circle, fill=white, inner sep=1pt, label=below:{\tiny $T_{2}$}] (20) at (1,0) {};
\node [draw, circle, fill=white, inner sep=1pt, label=below:{\tiny $T_{3}$}] (30) at (2,0) {};

\node [draw, circle, fill=white, inner sep=1pt, label=below:{\tiny $T_{s}$}] (40) at (4,0) {};
\node [draw, circle, fill=white, inner sep=1pt, label=above:{\tiny $T'_{1}$}] (01) at (0,1) {};
\node [draw, circle, fill=white, inner sep=1pt, label=above:{\tiny $T'_{2}$}] (21) at (1,1) {};
\node [draw, circle, fill=white, inner sep=1pt, label=above:{\tiny $T'_{3}$}] (31) at (2,1) {};
\node [draw, circle, fill=white, inner sep=1pt, label=above:{\tiny $T'_{s}$}] (41) at (4,1) {};

\node [draw, circle, fill=black, inner sep=.1pt] (250) at (2.75,0) {};		
\node [draw, circle, fill=black, inner sep=.1pt] (340) at (3,0) {};
\node [draw, circle, fill=black, inner sep=.1pt] (350) at (3.25,0) {};
\node [draw, circle, fill=black, inner sep=.1pt] (251) at (3,1) {};		
\node [draw, circle, fill=black, inner sep=.1pt] (321) at (3.25,1) {};
\node [draw, circle, fill=black, inner sep=.1pt] (351) at (2.75,1) {};		
\draw [color=red, line width=2pt, ->-] (00)--(01);
\draw [color=red, line width=2pt, ->-] (20)--(21);
\draw [color=red, line width=2pt, ->-] (40)--(41);
\draw [color=red, line width=2pt, ->-] (30)--(31);
\draw [->-] (01)--(20);
\draw [->-] (21)--(30);
\draw [->-] (41)--(00);
\end{tikzpicture}
\end{tabular}
\end{center}
where for each $i\in [s]$, $T_i, T_{i+1} \subseteq T'_i$ and $\m_{T_{i+1}}\mid\m_{T'_i}=\m_{T_i}$. It follows that
\[\m_{T_1}=\m_{T_{s+1}}\mid\m_{T_s}\mid \m_{T_{s-1}}\mid \cdots\mid\m_{T_3}\mid\m_{T_2}\mid\m_{T_1},\]
and hence $\m_{T_1} = \m_{T_2} = \cdots = \m_{T_s}=\m_{T'_1} = \m_{T'_2} = \cdots = \m_{T'_s}$, as required.
\end{proof}
\section{(Artinian reductions of) monomial ideals with $\leq 4$ generators}\label{s:main theorem}
For the rest of the paper, we use the following notation.

\begin{setup}[{\bf Our Setup}]\label{n:setup}
Let $S=\KK[x_1,\ldots,x_n]$ be a polynomial ring over a field $\KK$.
\begin{itemize}
\item If $\U=\{m_1,\ldots, m_q\}$ is a multiset of monomials in $S$ and $m_i \neq 1$ for all $i=1,\ldots, q$, then
\begin{itemize}
\item for $T \subseteq [q]$, set $\m_T=\lcm(m_j \colon  j\in T)$ and $\m_{\varnothing}=1$ as before.
\item for $T\subseteq [q]$, $\comp {T}$ denotes the set complement $[q] \setminus T$;
\item $\LCM(\U)$ denotes the set of least common multiples of any number of elements of $\U$;
\item if $\bu \in \LCM(\U)$, then set
\begin{align*}
\U_\bu=&\{T\subseteq [q] \colon \m_T=\bu\},\\
\comp{\U}_\bu=&\{ T \subseteq [q] \colon \m_{\comp{T}}=\bu\} =\{ \comp{T} \colon T \in \U_{\bu}\}. 
\end{align*}
\end{itemize}
\item If $M$ is an ideal of $S$ minimally generated by a set of monomials $\U=\{m_1,\ldots, m_q\}$, then we set
\begin{itemize}
\item $\LCM(M)=\LCM(\U)$;
\item $M_\bu=\U_\bu$ for $\bu \in \LCM(M)$.
\end{itemize}
\item $J$ stands for an ideal of $S$ minimally generated by $r$ monomials $u_j=\prod_{i\in [n]} x_i^{\alpha_{j,i}}$ for $j \in [r]$ using all $n$ variables $x_1,\ldots,x_n$, and that none of the
monomials $u_1,\ldots,u_r$ are pure powers ${x_j}^b$ for some integers $j$ and $b$. 
\item The ideal $I=J+\gen{{x_1}^{e_1},\ldots,{x_n}^{e_n}}$ is an Artinian reduction of $J$ with \[e_i > \max\{\alpha_{j,i} \colon j \in [r]\}\]
for all $i \in [n]$.
\item We will always use the following    order on the generators of $I$:
\[u_{1},\ldots,u_{r},{x_1}^{e_1}, \ldots,{x_n}^{e_n}.\]
In particular, the index set for the generators of $I$ will be $[r+n]$, where the $j$-th generator of $I$ is
\[\begin{cases} u_j, & \qif j\leq r,\\
 x_{j-r}^{e_{j-r}}, & \qif r <j \leq r+n.
\end{cases}\]
\item If  $\X$, $\Y$ are families of sets, then $\X\star\Y$ is the set 
\[\X\star\Y =\{X\cup Y \colon  X\in\X,\ Y\in\Y\}.\]
\end{itemize}
\end{setup}

\begin{lemma}\label{shift}
With notation as in \cref{n:setup}, let $\U=\{m_1,\ldots,m_q\}$ be a multiset of monomials in $S$, and let $\bu \in \LCM (\U)$ and $V=\{i \colon m_i \nmid \bu\}$. Then there exists a simplicial complex $\Delta$ on vertex set $[q] \ssm V$ such that $\comp{\U}_{\bu}=\Delta\star\{V\}$.
\end{lemma}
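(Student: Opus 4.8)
The plan is to realize $\comp{\U}_\bu$ as a cone with apex $V$ over a base complex on $[q]\ssm V$, and for this the first and really only substantive observation is that every member of $\comp{\U}_\bu$ must already contain $V$. Indeed, if $S\subseteq[q]$ satisfies $\m_{\comp S}=\bu$, then $m_i\mid \m_{\comp S}=\bu$ for each $i\in\comp S$, so $\comp S$ is disjoint from $V=\{i\colon m_i\nmid\bu\}$, i.e.\ $V\subseteq S$. Granting this, I would simply define
\[\Delta:=\{X\subseteq[q]\ssm V\colon\ V\cup X\in\comp{\U}_\bu\}\ =\ \{X\subseteq[q]\ssm V\colon\ \m_{([q]\ssm V)\ssm X}=\bu\},\]
and then $\comp{\U}_\bu=\Delta\star\{V\}$ is immediate from the definition of $\star$: the inclusion $\Delta\star\{V\}\subseteq\comp{\U}_\bu$ holds by construction, and conversely any $S\in\comp{\U}_\bu$ equals $V\cup(S\ssm V)$ with $S\ssm V\in\Delta$ thanks to $V\subseteq S$.

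It then remains to check that $\Delta$ is a simplicial complex on $[q]\ssm V$. The engine is the one-line fact that $\m_T\mid\bu$ for every $T\subseteq[q]\ssm V$ (each such $m_i$ divides $\bu$ by the definition of $V$), combined with monotonicity of $\m$ under inclusion. For downward closure: if $X\in\Delta$ and $Y\subseteq X$, then $\comp{V\cup X}\subseteq\comp{V\cup Y}\subseteq[q]\ssm V$, so $\bu=\m_{\comp{V\cup X}}$ divides $\m_{\comp{V\cup Y}}$, while $\m_{\comp{V\cup Y}}$ divides $\bu$ by the one-line fact; hence $\m_{\comp{V\cup Y}}=\bu$ and $Y\in\Delta$. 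For non-emptiness I would invoke the hypothesis $\bu\in\LCM(\U)$: writing $\bu=\m_T$ with $T\subseteq[q]$, we have $T\subseteq[q]\ssm V$, so $\bu=\m_T$ divides $\m_{[q]\ssm V}$, which divides $\bu$; thus $\m_{[q]\ssm V}=\bu$, i.e.\ $\vn\in\Delta$ (equivalently $\comp V\in\U_\bu$).

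I do not expect a genuine obstacle here: the whole argument is bookkeeping with complements and divisibility, and the content is front-loaded into the containment $V\subseteq S$. The one point deserving a word of care is the phrase ``on vertex set $[q]\ssm V$'': it should be read as $\Delta\subseteq 2^{[q]\ssm V}$, since an index $i\in[q]\ssm V$ can fail to be a vertex of $\Delta$ (precisely when $\m_{[q]\ssm(V\cup\{i\})}\neq\bu$). The reason the lemma is worth isolating is exactly this structural payoff — passing to complements collapses the fibre $\comp{\U}_\bu$ of $\bu$ to $\Delta\star\{V\}$ — which is what later allows one to cut the search for homogeneous acyclic matchings down to the much smaller complex $\Delta$.
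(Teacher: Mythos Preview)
Your proof is correct and follows essentially the same route as the paper: first observe $V\subseteq S$ for every $S\in\comp{\U}_\bu$, then set $\Delta=\{S\ssm V:S\in\comp{\U}_\bu\}$ and verify downward closure via the divisibility chain $\bu=\m_{\comp{S}}\mid\m_{\comp{T'}}\mid\m_{\comp V}=\bu$. Your additional remarks on non-emptiness and on the reading of ``on vertex set $[q]\ssm V$'' are fair points that the paper leaves implicit.
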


\begin{proof}
First observe that
\begin{align*}
   T \in \comp{\U}_\bu & \Longrightarrow \m_{\comp{T}}=\bu
   \\ &\Longrightarrow \comp{T}\cap V =\varnothing\\ & \Longrightarrow
   T \supseteq V.
\end{align*}
Let $\Delta=\{ T \setminus V \colon \m_{\comp{T}}=\bu \}$. Then $\comp{\U}_\bu= \Delta \star \{V\}$ and $\Delta$ is a simplicial complex, because if $ F \subseteq T \setminus V$ for some $T$ with $\m_{\comp{T}}=\bu$, then by letting $T'=F \cup V$ we have
\begin{align*}
V \subseteq T' \subseteq T & \Longrightarrow
\comp{T} \subseteq \comp{T'} \subseteq \comp{V}\\
&\Longrightarrow \bu = \m_{\comp{T}} \mid \m_{\comp{T'}}
\mid \m_{\comp{V}}=\bu\\
& \Longrightarrow \m_{\comp{T'}}=\bu\\
& \Longrightarrow  F=T' \setminus V \in \Delta,
\end{align*}
as required.
\end{proof}
\begin{proposition}\label{p2:int-hyp} 
With notation as in \cref{n:setup}, let $I=\gen{u_{1},\ldots,u_{r},{x_1}^{e_1}, \ldots,{x_n}^{e_n}}$ and $\bu=x_1^{b_1}\cdots x_n^{b_n} \in \LCM(I)$. Let
\[A = \{i\in[n] \colon 0 < b_i < e_i \}, \quad B=\{i \in [n] \colon b_i=e_i \},\]
and for $j \in [r]$ let 
\[u_j'= \prod_{i\in A} x_i^{\alpha_{j,i}}, \quad \bu'= \prod_{i \in A} x_i^{b_i}, \qand \U' = \{u_j' \mid j \in [r]\}.\]
Then 
\begin{itemize}
\item[\rm (i)] $\bu'\in \LCM(\U')$;
\item[\rm (ii)] $I_\bu=\U'_{\bu'}\star \{ X'\}$ where $X'=\{r+i \colon i\in B\}$;
\item[\rm (iii)] There exist a simplicial complex $\Delta$ on $\leq r$ vertices and an isomorphism 
\[\phi  \colon G_I[I_\bu]\longrightarrow G_{\U'}[\Delta]\]
of underlying graphs such that $\phi(T)=([r]\ssm({T \ssm X'}))\ssm V$ where $T \in V(G_I[I_\bu])$, and $V=\{i \colon m_i \nmid \bu\}$;
\item[\rm (iv)] If $T_1 , T_2$ are vertices of $G_I[I_\bu]$ such that $|T_2|=|T_1|+1$ then $|\phi(T_1)|=|\phi(T_2)|+1$;
\item[\rm (v)] If $T_1 \subseteq T_2$ are vertices of $G_I[I_\bu]$ then $\phi(T_2) \subseteq \phi(T_1)$.
\end{itemize}
\end{proposition}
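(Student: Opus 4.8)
The plan is to read off the label $\m_T=\lcm(m_k\colon k\in T)$ of a face $T$ of $\TT(I)$ one variable at a time, using the order $u_1,\dots,u_r,x_1^{e_1},\dots,x_n^{e_n}$ on the generators fixed in \cref{n:setup}. Write $[r+n]=[r]\sqcup P$ with $P=\{r+1,\dots,r+n\}$ indexing the pure powers, and for $T\subseteq[r+n]$ set $T_J=T\cap[r]$, $T_P=T\cap P$. Since $m_{r+i}=x_i^{e_i}$, the exponent of $x_i$ in $\m_T$ is $\max\bigl(\{\alpha_{j,i}\colon j\in T_J\}\cup\{e_i\colon r+i\in T_P\}\bigr)$. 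The one place the Artinian hypothesis is used --- and the heart of the argument --- is the following rigidity statement, which I would isolate first: because $e_i>\alpha_{j,i}$ for every $j$, if $\m_T=\bu$ then the exponent of $x_i$ equals $e_i$ exactly when $i\in B$ and is strictly smaller otherwise, so $T_P=\{r+i\colon i\in B\}=X'$. In other words, the pure-power part of every face labelled $\bu$ is completely pinned down.

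Granting this, a face $T\in I_\bu$ is determined by $T_J\subseteq[r]$, and $\m_T=\bu$ becomes a condition on $T_J$ alone: for $i\in B$ the exponent is automatically $e_i=b_i$; for $i$ with $b_i=0$ one needs $\alpha_{j,i}=0$ for all $j\in T_J$, which is just $m_j\mid\bu$ for all $j\in T_J$, i.e. $T_J\cap V=\varnothing$ where $V=\{k\colon m_k\nmid\bu\}$; and for $i\in A$ one needs $\max\{\alpha_{j,i}\colon j\in T_J\}=b_i$, which says exactly that $\lcm(u_j'\colon j\in T_J)=\bu'$. Because $\bu\in\LCM(I)$, some face realises $\bu$, and its $[r]$-part witnesses $\bu'\in\LCM(\U')$, proving (i); and the analysis just given yields the description $I_\bu=\{\,S\cup X'\colon S\subseteq[r],\ S\cap V=\varnothing,\ \lcm(u_j'\colon j\in S)=\bu'\,\}$, which is (ii), once one restricts $\U'_{\bu'}$ to subsets of $[r]\ssm V$ (a generator $u_j$ with $u_j\nmid\bu$ may still have $u_j'\mid\bu'$, so this restriction is genuine and must be tracked).

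For (iii)--(v) I would pass to complements. The map $T\mapsto\comp T$ is an order-reversing bijection, $I_\bu\subseteq 2^{\,[r+n]\ssm V}$ by the divisibility remark above, and every $T\in I_\bu$ contains $X'$ by rigidity; complementing $T$ in $[r+n]$ and then deleting the fixed set $V$ (which absorbs all the pure-power indices of $\comp T$) carries $T$ to $([r]\ssm(T\ssm X'))\ssm V=([r]\ssm V)\ssm T_J$, the stated $\phi(T)$. I would then invoke \cref{shift} --- applied to $\U=\G(I)$ and $\bu$, or equivalently to $\U'$ and $\bu'$ --- to conclude that the image $\Delta=\phi(I_\bu)$ is a simplicial complex on the vertex set $[r]\ssm V$, hence on at most $r$ vertices; \cref{shift} supplies precisely the downward-closedness needed, and the hypothesis $\bu\in\LCM(I)$ enters here to guarantee that the largest candidate $[r+n]\ssm V$ still realises $\bu$, so that any set between a valid $T_J$ and $[r]\ssm V$ is valid too. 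Since the edges and edge-directions of $G_I$ and of $G_{\U'}$ depend only on the inclusion order of faces and not on their monomial labels, the order-reversing bijection $\phi$ is automatically an isomorphism of the underlying graphs $G_I[I_\bu]\to G_{\U'}[\Delta]$. Finally (iv) and (v) drop out of the formula $\phi(T)=([r]\ssm V)\ssm T_J$ together with the constancy of $T_P=X'$ on $I_\bu$: if $|T_2|=|T_1|+1$ then $|T_{2,J}|=|T_{1,J}|+1$, so $|\phi(T_1)|=|[r]\ssm V|-|T_{1,J}|=|\phi(T_2)|+1$; and $T_1\subseteq T_2$ forces $T_{1,J}\subseteq T_{2,J}$, hence $\phi(T_2)\subseteq\phi(T_1)$.

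I expect the main obstacle to be organizational rather than conceptual: keeping the index sets $[r]$, $P$, $A$, $B$ (and the auxiliary $C=\{i\colon b_i=0\}$), together with $V$ and $X'$, straight while moving between $G_I$, its restriction to $I_\bu$, the complemented picture, and $G_{\U'}$ --- in particular, making sure the divisibility constraint $T_J\cap V=\varnothing$ is correctly absorbed in (ii), and checking that \cref{shift} delivers the simplicial-complex structure on the nose. None of the individual verifications is deep; the only real content is the first-step observation that the Artinian exponents $e_i$, being strictly larger than all $\alpha_{j,i}$, freeze the pure-power part of every face carrying the label $\bu$.
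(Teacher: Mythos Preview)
Your approach matches the paper's: pin down $T\cap P=X'$ using the Artinian hypothesis $e_i>\alpha_{j,i}$, then complement inside $[r]$ and invoke \cref{shift} to produce the simplicial complex $\Delta$; parts (iv) and (v) then follow from the explicit formula $\phi(T)=([r]\ssm V)\ssm T_J$ exactly as you say.

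You are in fact more careful than the paper in one spot. You correctly flag that the identity in (ii) needs the extra constraint $T_J\cap V=\varnothing$, since a generator $u_j$ with $u_j\nmid\bu$ (because $\alpha_{j,i}>0$ for some $i$ with $b_i=0$) can still satisfy $u_j'\mid\bu'$. The paper's converse argument in (ii) asserts $\m_{T'\cup X'}=\lcm(\bu',\bu'')$ for any $T'\in\U'_{\bu'}$, which silently assumes no such $j$ lies in $T'$; your restriction is exactly what is needed to repair this.

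Your one slip is the parenthetical ``or equivalently to $\U'$ and $\bu'$'': applying \cref{shift} to $\U'$ yields $V'=\{j:u_j'\nmid\bu'\}$, which---for the very reason you just identified---can be strictly smaller than $\{j\in[r]:u_j\nmid\bu\}$, so the two applications are not equivalent. Your primary route through $\U=\G(I)$ is the correct one and delivers the right $\Delta$ on $[r]\ssm V$.
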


\begin{proof}
Assume $\bu=\lcm(u_{j_1},\ldots,u_{j_s}, x_{w_1}^{e_{w_1}}, \ldots, x_{w_t}^{e_{w_t}})$ where $j_1  <\cdots<j_s$. Then
\[b_i=\max\{\alpha_{{j_1},i},\ldots,\alpha_{{j_s},i}\} \qfor i\in A\]
and  
\[\bu'=\lcm(u_{j_1}',\ldots,u_{j_s}')\in \LCM(\U').\]
This settles~(i). To show~(ii), let $\bu'' = \prod_{i \in B}  x_i^{e_i}$. Then $\bu=\bu'\cdot \bu''$.  If $T \subseteq [r+n]$ is such that $\m_T=\bu$, put $T'=T \cap [r]$ and $T''=T\cap \{r+1,\ldots,r+n\}= X'$. It immediately follows that $\bu'= \lcm (u_j' \mid j \in T')$ and $\bu''=\m_{T''}$. Since $T=T' \cup T''$, we have shown that $T \in \U'_{\bu'}\star \{X'\}$. Conversely, if $T \in \U'_{\bu'} \star \{X'\}$, then $T=T' \cup X'$ where $T'\subseteq [r]$ and $\lcm (u_j' \mid j \in T')=\bu'$. This implies that $\m_T=\lcm(u_j \mid j \in T' \cup X')=\lcm(\bu',\bu'')=\bu$ and so $T\in I_\bu$.  This ends the proof of~(ii). To verify~(iii), by~(ii) and \cref{shift}, we have:
\[I_\bu=\U'_{\bu'}\star\{X'\} \qand \comp{\U'}_{\bu'}=\Delta \star \{V\},\]
where $\Delta$ is a simplicial complex on vertex set $[r] \ssm V$. Consider the following sequence of graph isomorphisms, where $f_1$ and $f_3$ preserve the directions of the edges, and $f_2$ reverses the directions of the edges:
\[\begin{array}{llclll}	
	f_1: & G_{\U'}[\Delta]&\longrightarrow &G_{\U'}[\comp{\U'}_{\bu'}], &
	f_1(T)= T \cup V;\\
	f_2:&G_{\U'}[\comp{\U'}_{\bu'}]&\longrightarrow &G_{\U'}[\U'_{\bu'}], &
	f_2(T)=[r]\ssm T=\comp{T};\\
	f_3:&G_{\U'}[\U'_{\bu'}]&\longrightarrow &G_I[I_\bu],
	& f_3(T)= T\cup X', 	
	\end{array}\]
which leads to the following graph isomorphisms
\[\begin{array}{ccccccc}
	G_{\U'}[\Delta]           &\cong &
	G_{\U'}[\comp{\U'}_{\bu'}] &\cong &
	G_{\U'}[\U'_{\bu'}]        &\cong &
	G_I[I_\bu]\\
	T                     &\leftrightarrow &
	T \cup V         &\leftrightarrow &
	[r]\ssm({T \cup V}) &\leftrightarrow &
	 ([r]\ssm({T \cup V}))  \cup X'.
	\end{array}\]
From the  graph isomorphisms above, we have the isomorphism $\phi:=f_1^{-1}\circ f_2^{-1}\circ f_3^{-1}$ from
$G_I[I_\bu]$ to $G_{\U'}[\Delta]$.  

If $T\in V(G_I[I_\bu])$ then $T=T' \cup X'$ where $T' \cap X'=\varnothing$ and 
\[\phi(T)=([r]\ssm({T \ssm X'}))\ssm V=[r]\ssm (T' \cup V).\]

To verify~(iv), let $T_1 , T_2 \in V(G_I[I_\bu])$ be such that $|T_1|=|T_2|+1$. Then $|T'_1|=|T_1|-|X'|=|T_2|+1-|X'|=|T'_2|+1$. Since $V \cap T'_2=V \cap T'_1=\varnothing$, we conclude that $|T'_1 \cup V|=|T'_1|+|V|=|T'_2|+1+|V|=|T'_2 \cup V|+1$. Hence $|[r]\ssm(T'_1 \cup V)|=|[r]\ssm(T'_2 \cup V)|-1$. Thus $\phi (T_2)=\phi(T_1)+1$. 

Finally, to show~(v), let $T_1 , T_2 \in V(G_I[I_\bu])$ be such that $T_2\subseteq T_1$. Since $T_2\cap X'=T_1 \cap X'$, we have
\[T'_2=T_2\ssm X'=T_2\ssm (T_2\cap X')\subseteq T_1\ssm (T_1\cap X')=T'_1.\]
So $ T'_2 \cup  V \subseteq T'_1 \cup V$ and $[r]\ssm (T'_1 \cup V) \subseteq [r]\ssm (T'_2 \cup V)$, which implies that $\phi (T_1) \subseteq \phi(T_2)$.
\end{proof}

\cref{p2:int-hyp} is inductively powerful, as it reduces the search
for BW-matchings inside a large lattice to a much smaller simplicial
complex. Our main theorem (\cref{t:main11}) makes full use of
\cref{p2:int-hyp} to find BW-matchings for Artinian reductions of
ideals with up to $4$ generators.  We first do an example.

\begin{example}\label{ex:Roya11}
Let $I=(x_{1}^2x_{2}^2, x_1x_3, x_{1}^3x_4, x_{1}^4, x_{2}^3, x_{3}^2,
x_{4}^2)$ be a monomial ideal in the polynomial ring $S=\KK[x_1, x_2,
  x_3, x_4]$. If $\bu=x_{1}^3x_{2}^3x_{3}^2x_{4} $, then with notation
as in \cref{p2:int-hyp},
\begin{align*}
A=\{1,4\},\quad X'=\{5,6\},\quad \bu'=x_{1}^3x_{4}, \qand
\U'=\{x_{1}^2, x_{1}, x_{1}^3x_{4}\}.
\end{align*}
So,
\begin{align*}
I_\bu=&\{\{3,5,6\}, \{1,3,5,6\},\{2,3,5,6\}, \{1,2,3,5,6\}\}\\
	=&\{\{3\},\{1,3\},\{2,3\},\{1,2,3\}\}\star \{\{5,6\}\}\\
	=&\U'_{\bu'}\star \{X'\}.
\end{align*}
Hence
\begin{align*}
\comp{\U'}_{\bu'}=&\{\{1,2\}, \{2\},\{1\},\{\varnothing\}\}\\
=&\facets{12}\star \{\varnothing\}= {\Delta}\star \{V\}.
\end{align*}
Therefore, $G_{\U'}[\Delta]$ is indeed the graph \cref{f:mainn} (left), which leads to the homogeneous acyclic matching $\M_\bu$ for $G_I[I_\bu]$ as shown in \cref{f:mainn} (right):
\begin{figure}[H]
\begin{tabular}{ccc}	&\hspace{1in}&\\	
\begin{tabular}{ccccccc}
\begin{tikzpicture}[scale=0.75]
\node [draw, circle, fill=white, inner sep=1pt,label=left:{\tiny{$\varnothing$}}] (000) at (0,0) {};
\node [draw, circle, fill=white, inner sep=1pt,label=right:{\tiny{$2$}}] (100) at (3,0) {};
\node [draw, circle, fill=white, inner sep=1pt,label=left:{\tiny{$1$}}] (010) at (0,3) {};
\node [draw, circle, fill=white, inner sep=1pt,label=right:{\tiny{$12$}}] (110) at (3,3) {};
\draw[color=red,line width=2pt,->-] (000)--(100);
\draw[-<-] (000)--(010);
\draw [-<-](100)--(110);
\draw [color=red,line width=2pt,->-](010)--(110);
\end{tikzpicture}&&&&&&
\begin{tikzpicture}[scale=0.75]
\node [draw, circle, fill=white, inner sep=1pt, label=left:{\tiny{$12356$}}] (000) at (0,0) {};
\node [draw, circle, fill=white, inner sep=1pt,label=right:{\tiny{$1356$}}] (100) at (3,0) {};
\node [draw, circle, fill=white, inner sep=1pt,label=left:{\tiny{$2356$}}] (010) at (0,3) {};
\node [draw, circle, fill=white, inner sep=1pt,label=right:{\tiny{$356$}}] (110) at (3,3) {};
			
\draw[color=red, line width=2pt,->- ] (100)--(000);
\draw  [->-](000)--(010);
\draw [->-] (100)--(110);
\draw [color=red, line width=2pt, ->-](110)--(010);
\end{tikzpicture}
\end{tabular}
\end{tabular}\caption{}\label{f:mainn}
\end{figure}	
Also if $\bu=x_{1}^4x_{2}^3x_{3}^2x_{4}^2$, then 
\[A=\varnothing,\quad X'=\{4,5,6,7\}, \qand \bu'=1.\]
By \cref{p2:int-hyp},
\begin{align*}
I_\bu=&\{\{4,5,6,7\},\{1,4,5,6,7\},\{2,4,5,6,7\},\{3,4,5,6,7\},\{1,2,4,5,6,7\},\\
&\{1,3,4,5,6,7\},\{2,3,4,5,6,7\}, \{1,2,3,4,5,6,7\}\}\\
=&\{\{\varnothing\},\{1\},\{2\},\{3\},\{1,2\},\{1,3\},\{2,3\},\{1,2,3\}\}\star \{\{4,5,6,7\}\}\\
=&{\U'}_{\bu'}\star \{X'\}.
\end{align*}
Thus
\begin{align*}
\comp{\U'}_{\bu'}=&\{\{1,2,3\}, \{2,3\},\{1,3\}, \{1,2\},\{3\}, \{2\},\{1\}, \{\varnothing\}\}\\
=&\facets{123}\star \{\varnothing\}=
{\Delta}\star \{V\}.
\end{align*}
Therefore, $G_{\U'}[\Delta]$ is the graph in \cref{f:mainnn} (left), which leads to the homogeneous acyclic matching $\M_\bu$ for $G_I[I_\bu]$ as shown in \cref{f:mainnn} (right):
\begin{figure}[H]
\begin{tabular}{ccc}
&\hspace{1in}&\\
\begin{tabular}{ccccccc}
\begin{tikzpicture}[scale=1]
\node [draw, circle, fill=white, inner sep=1pt,label=left:{\tiny{$\varnothing$}}] (000) at (0,0) {};
\node [draw, circle, fill=white, inner sep=1pt,label=right:{\tiny{$3$}}] (100) at (3,0) {};
\node [draw, circle, fill=white, inner sep=1pt,label=left:{\tiny{$1$}}] (010) at (0,3) {};
\node [draw, circle, fill=white, inner sep=1pt, label=left:{\tiny{$2$}}] (001) at (1,1) {};
\node [draw, circle, fill=white, inner sep=1pt,label=right:{\tiny{$13$}}] (110) at (3,3) {};
\node [draw, circle, fill=white, inner sep=1pt,label=right:{\tiny{$23$}}] (101) at (2,1) {};
\node [draw, circle, fill=white, inner sep=1pt,label=left:{\tiny{$12$}}] (011) at (1,2) {};
\node [draw, circle, fill=white, inner sep=1pt,label=right:{\tiny{$123$}}] (111) at (2,2) {};
\draw[color=red,line width=2pt,->-] (000)--(100);
\draw [-<-](000)--(010);
\draw [-<-](000)--(001);
\draw [-<-](100)--(110);
\draw [-<-](100)--(101);
\draw [color=red,line width=2pt,->-](010)--(110);
\draw [-<-](010)--(011);
\draw [color=red,line width=2pt,->-](001)--(101);
\draw [-<-](001)--(011);
\draw [-<-](110)--(111);
\draw [-<-](101)--(111);
\draw[color=red,line width=2pt,->-] (011)--(111);       
\end{tikzpicture}&&&&&	\begin{tikzpicture}[scale=.9]
\node [draw, circle, fill=white, inner sep=1pt, label=left:{\tiny{$1234567$}}] (000) at (0,0) {};
\node [draw, circle, fill=white, inner sep=1pt, label=right:{\tiny{$124567$}}] (100) at (3.5,0) {};
\node [draw, circle, fill=white, inner sep=1pt,label=left:{\tiny{$234567$}}] (010) at (0,3.5) {};
\node [draw, circle, fill=white, inner sep=1pt, label=left:{\tiny{$134567$}}] (001) at (1.25,1.25) {};
\node [draw, circle, fill=white, inner sep=1pt,label=right:{\tiny{$24567$}}] (110) at (3.5,3.5) {};
\node [draw, circle, fill=white, inner sep=1pt,label=right:{\tiny{$14567$}}] (101) at (2.25,1.25) {};
\node [draw, circle, fill=white, inner sep=1pt,label=left:{\tiny{$34567$}}] (011) at (1.25,2.25) {};
\node [draw, circle, fill=white, inner sep=1pt,label=right:{\tiny{$4567$}}] (111) at (2.25,2.25) {};
\draw[color=red,line width=2pt,->-] (100)--(000);
\draw[->-] (000)--(010);
\draw [->-](000)--(001);
\draw [->-](100)--(110);
\draw [->-](100)--(101);
\draw [color=red,line width=2pt,->-](110)--(010);
\draw [->-](010)--(011);
\draw [color=red,line width=2pt,->-](101)--(001);
\draw [->-](001)--(011);
\draw[->-] (110)--(111);
\draw [->-](101)--(111);
\draw[color=red,line width=2pt,->-] (111)--(011);       
\end{tikzpicture}
\end{tabular}
\end{tabular}\caption{}\label{f:mainnn}
\end{figure}
\end{example}

We are now ready to show that any monomial ideal with at most four generators, and any monomial Artinian reduction of such an ideal, has a minimal free resolution supported on a CW-complex. Now that we have \cref{p2:int-hyp}, this task is reduced to finding a BW-matching for every single simplicial complex with at most four vertices.

\begin{theorem}[{\bf Main Theorem}]\label{t:main11}
Let $J$ be a monomial ideal with at most four monomial generators in the polynomial ring $S=\KK[x_1,\ldots,x_n]$ over a field $\KK$. Also, let $I=J+\gen{{x_1}^{e_1},\ldots,{x_n}^{e_n}}$ be an Artinian reduction of $J$, where $e_1,\ldots,e_n$ are positive integers. Then both $I$ and $J$ have minimal free resolutions supported on a CW-complex.
\end{theorem}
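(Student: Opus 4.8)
The plan is to exhibit a BW-matching (\cref{q:main}) for each of $G_J$ and $G_I$; by \cref{t:BW} this delivers the two minimal cellular resolutions. We build such matchings fibrewise over the lcm lattice. For $M\in\{I,J\}$ and $\bu\in\LCM(M)$ write $M_\bu=\{T:\m_T=\bu\}$. Since any edge $(T,T')$ of $G_M$ with $\m_T=\m_{T'}$ has both endpoints in a common fibre $M_\bu$, a homogeneous matching of $G_M$ is precisely a disjoint union $\M=\bigsqcup_{\bu}\M_\bu$ of matchings $\M_\bu$ on the induced subgraphs $G_M[M_\bu]$. By \cref{l:cycle}(iv) every directed cycle of $G_M^\M$ lies inside a single $G_M[M_\bu]$, so $\M$ is acyclic once each $\M_\bu$ is. Finally, along any directed path of $G_M^\M$ the monomial label is unchanged across reversed matching edges and weakly decreases (in divisibility) across the other edges; hence if $T,T'$ are $\M$-critical with $|T|=|T'|+1$, $\sigma_{T'}\preceq\sigma_T$ and $\m_T=\m_{T'}=\bu$, then the inclusion or directed path witnessing $\sigma_{T'}\preceq\sigma_T$ already lies inside $G_M[M_\bu]^{\M_\bu}$ and $T,T'$ are $\M_\bu$-critical. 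Consequently $\M$ is a BW-matching as soon as each $\M_\bu$ is acyclic and no two $\M_\bu$-critical vertices $T,T'$ with $|T|=|T'|+1$ are joined by the inclusion $T'\subseteq T$ or by a directed path of $G_M[M_\bu]^{\M_\bu}$ from some $T''\subseteq T$ with $|T''|=|T'|$ to $T'$ — a condition met by any perfect acyclic matching of the poset $G_M[M_\bu]$ that has no directed path between critical cells of consecutive dimension; since all differential entries internal to the fibre are then multiplied only by $\bu/\bu=1$, this makes the corresponding part of the complex minimal over every field $\KK$.

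It remains to supply such $\M_\bu$ for every fibre, and the fibres turn out to be tiny. For $M=I$ this is \cref{p2:int-hyp}: for each $\bu\in\LCM(I)$ there is a graph isomorphism $\phi\colon G_I[I_\bu]\to G_{\U'}[\Delta]$ onto the face-poset graph of a simplicial complex $\Delta$ on at most $r\le 4$ vertices, with $\phi$ reversing inclusions and shifting cardinalities by a constant. For $M=J$ the analogue is elementary: with $V=\{i:u_i\nmid\bu\}$, the set $J_\bu$ is an up-set of the Boolean lattice on $[r]\ssm V$, so complementation inside $[r]\ssm V$ identifies $G_J[J_\bu]$ (again reversing edge directions) with the face-poset graph of a simplicial complex on $\le r\le 4$ vertices. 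Reversing all edge directions preserves acyclicity of matchings and turns the fibrewise condition of the previous paragraph into a condition of the same shape, so the whole theorem reduces to the finite statement: every simplicial complex on at most four vertices carries an acyclic matching of its face poset with no directed path between critical cells of consecutive dimension.

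This finite statement is verified by inspection. Up to isomorphism there are only a handful of simplicial complexes on $\le 4$ vertices, each a subcomplex of $2^{[4]}$, and each is either contractible or homotopy equivalent to a wedge of spheres of dimensions $0$, $1$, $2$; for each one writes down an explicit perfect acyclic matching — the ``element matching'' $T\mapsto T\,\triangle\,\{v\}$ clears the full simplices, and a short enumeration covers the rest, including the disconnected ones such as a hollow triangle together with an isolated vertex — in which, by construction, no critical cell of a dimension $i$ is joined by a directed path to a critical cell of dimension $i+1$. Transporting these matchings through the isomorphisms of the previous paragraph and setting $\M=\bigsqcup_{\bu}\M_\bu$ produces the required BW-matchings of $G_J$ and $G_I$. (The normalizing hypotheses of \cref{n:setup} may be arranged without loss of generality for $I$, and are not needed for the $J$ part.)

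Two points will need care. The conceptual one is the fibrewise reduction of the first paragraph: one must check carefully that the relation $\preceq$ of \eqref{directed path} cannot ``escape'' a fibre once the two critical cells share a label, which rests on the monotonicity of labels along directed paths of $G_M^\M$, much as in the proof of \cref{l:cycle}. The bulk of the work, and the place where something could go wrong, is the finite case analysis: although each complex on $\le 4$ vertices is individually easy, one must be systematic so that no case is missed and, crucially, so that in every case the matching is taken \emph{free of directed paths between consecutive critical cells} — mere cancellation of such paths would make minimality depend on the characteristic of $\KK$.
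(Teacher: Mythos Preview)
Your proposal is correct and follows essentially the same route as the paper: decompose the matching fibrewise over the lcm lattice, use \cref{p2:int-hyp} (and \cref{shift} for $J$) to identify each fibre with the face poset of a simplicial complex on at most four vertices, and then handle the finitely many such complexes by an explicit case check, which the paper carries out in full in \cref{l:possible}. Two small points of divergence: the paper treats $J$ by restricting the matching already built for $I$ to $E(G_J)$ rather than rerunning the fibrewise construction; and your phrase ``perfect acyclic matching'' is a slip, since the fibrewise matchings are generally not perfect---indeed for $\Delta=\facets{1,23,24,34}$ the paper's matching leaves critical cells $\{1\}$ and $\{24\}$ in consecutive dimensions, and one must verify directly that $\sigma_{\{1\}}\not\preceq\sigma_{\{24\}}$.
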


\begin{proof}
Following \cref{n:setup}, suppose $J$ is minimally generated by monomials $u_1,\ldots,u_r$ and $I$ is an  Artinian reduction of $J$ where $r\leq4$.  First we prove the result for $I$ by showing that
$G_I[I_\bu]$ has a BW-matching for every $\bu \in\LCM(I)$.

Let $\bu \in\LCM(I)$. By \cref{p2:int-hyp}, $G_I[I_\bu]\cong  G_{\U'}[\Delta]$, where $\Delta$ is a simplicial complex on at most $|\U'| \leq r \leq 4$ vertices.
According to \cite{oeis} the only possible simplicial complexes on at most four vertices -- and hence candidates for $\Delta$ -- are
\begin{equation}\label{possible Y_u on 4 letters}
\begin{aligned}
&\facets{}, \facets{1}, \facets{1,2}, \facets{12}, \facets{1,2,3}, \facets{123},
\facets{1,23}, \facets{13,23}, \facets{12,13,23},\\
&\facets{1,2,3,4}, \facets{12,34}, \facets{1,234},\facets{1,2,34}, \facets{1,24,34},\\ 
&\facets{13,24,34}, \facets{14,24,34},\facets{1,23,24,34}, \facets{12,13,24,34},\\
&\facets{14,23,24,34}, \facets{13,14,23,24,34}, \facets{12,13,14,23,24,34},\\
&\facets{12,13,14,234}, \facets{14,234}, \facets{13,14,234}, \facets{134,234},\\
&\facets{12,134,234}, \facets{124,134,234}, \facets{123,124,134,234}, \facets{1234}
\end{aligned}
\end{equation}
In \cref{l:possible} we highlight an explicit acyclic matching $\M'_\bu$ for each possible graph $G_{\U'}[\Delta]$. For example, in \cref{f:matching}, $\Delta=\facets{1,23}$. The graph $G_{\U'}[\Delta]$ with a matching $\M'_\bu$ appears on the left, and the corresponding homogeneous acyclic matching $\M_\bu$ for $G_I[I_\bu]$ appears on the right with $V$ and $X'$ as in \cref{p2:int-hyp}. The matchings in the right-hand-side picture are homogeneous as all vertices have the same monomial label $\bu$. 
\begin{figure}[H]
\begin{tabular}{ccc}
	&\hspace{1in}&\\
\begin{tabular}{ccccccc}
\begin{tikzpicture}[scale=0.75]
\node [draw, circle, fill=white, inner sep=1pt,label=left:{\tiny{$\varnothing$}}] (000) at (0,0) {};
\node [draw, circle, fill=white, inner sep=1pt, label=right:{\tiny{$1$}}] (100) at (3,0) {};
\node [draw, circle, fill=white, inner sep=1pt,label=left:{\tiny{$2$}}] (010) at (0,3) {};
\node [draw, circle, fill=white, inner sep=1pt, label=right:{\tiny{$3$}}] (001) at (1,1) {}; 
\node [draw, circle, fill=white, inner sep=1pt,label=right:{\tiny{$23$}}] (011) at (1,2) {};
			
\draw [ -<-](000)--(100);
\draw[color=red,line width=2pt, ->-] (000)--(010);
\draw [ -<-](000)--(001);
\draw[ -<-] (010)--(011);
\draw [color=red,line width=2pt, ->-](001)--(011);
\end{tikzpicture} 
\end{tabular}
		&&
\begin{tabular}{ccccccc}
\begin{tikzpicture}[scale=0.75]
\node [draw, circle, fill=white, inner sep=1pt, label=left:{\tiny{$ V\cup\{X'\}$}}] (000) at (0,0) {};
\node [draw, circle, fill=white, inner sep=1pt, label=right:{\tiny{$(V\setminus\{1\})\cup\{X'\}$}}] (100) at (3,0) {};
\node [draw, circle, fill=white, inner sep=1pt,
label=left:{\tiny{$(V\setminus\{2\})\cup\{X'\}$}}] (010) at (0,3) {};
\node [draw, circle, fill=white, inner sep=1pt, label=right:{\tiny{$(V\setminus\{3\})\cup\{X'\}$}}] (001) at (1,1) {}; 
\node [draw, circle, fill=white, inner sep=1pt,
label=right:{\tiny{$(V\setminus\{2,3\})\cup\{X'\}$}}] (011) at (1,2) {};
			
\draw [ ->-](000)--(100);
\draw[color=red,line width=2pt, -<-] (000)--(010);
\draw [ ->-](000)--(001);
\draw[ ->-] (010)--(011);
\draw [color=red,line width=2pt, -<-](001)--(011); 
\end{tikzpicture} 
\end{tabular}\\
&&\\
$G_{\U'}[\Delta]$ && $G_I[I_\bu]$\\
		&&
\end{tabular}\caption{}\label{f:matching} 
\end{figure}	

To build a homogeneous matching on $G_I$, we take $\M=\bigcup_{\bu\in\LCM(I)}\M_\bu$. We observe that $\M$ is an acyclic homogeneous matching because if $\C$ is a cycle in $G_I^\M$, then $V(\C)\subseteq I_\bu$ for some $\bu\in\LCM(I)$ (see \cref{l:cycle}). This contradicts the fact that $\M_\bu$ is acyclic.
	
We show that homogeneous acyclic matching $\M$ is indeed a BW-matching of $G_I$. To this end, for two $\M$-critical vertices $V_1$ and $V_2$ of $G_I$ where $|V_2|=|V_1|+1$ and $\sigma_{V_1} \preceq \sigma_{V_2}$, we show $\m_{V_1} \neq \m_{V_2}$. Suppose on the contrary that $\m_{V_1} = \m_{V_2}=\bu$. Then $V_1$ and $V_2$ are $\M_\bu$-critical vertices of $G_I$. By \cref{p2:int-hyp}(iv), there exists two vertices $T_1, T_2 \in V(G_{\U'}[\Delta])$ such that $\phi(V_i)=T_i$ for $i=1,2$, and $|T_1|=|T_2|+1 $. Since $\sigma_{V_1} \preceq \sigma_{V_2}$, either ${V_1} \subseteq {V_2}$ or there is a directed path from  $V'_3$ to $V_1$ in $G_I$ for some $V'_3 \subseteq V_2$ satisfying $|V'_3|=|V_1|$. If ${V_1} \subseteq {V_2}$, then by \cref{p2:int-hyp}(v), we have ${T_2} \subseteq {T_1}$ and hence $\sigma_{T_2} \preceq \sigma_{T_1}$. On the other hand, if there exists a directed path from  $V'_3$ to $V_1$ in $G_I$, this path is of the form:
\begin{center}
\begin{tabular}{ccccccc}
\begin{tikzpicture}
\node [draw, circle, fill=white, inner sep=1pt, label=below:{\tiny $V'_3$}] (00) at (0,0) {};
\node [draw, circle, fill=white, inner sep=1pt, label=below:{\tiny $V'_4$}] (20) at (1,0) {};
\node [draw, circle, fill=white, inner sep=1pt, label=below:{\tiny $V'_5$}] (30) at (2,0) {};
\node [draw, circle, fill=white, inner sep=1pt, label=below:{\tiny $V'_{s}$}] (40) at (4,0) {};		
\node [draw, circle, fill=white, inner sep=1pt, label=below:{\tiny $V'_{s+1}=V_1$}] (50) at (5,0) {};
\node [draw, circle, fill=white, inner sep=1pt, label=above:{\tiny $V_2$}] (01) at (0,1) {};
\node [draw, circle, fill=white, inner sep=1pt, label=above:{\tiny $V_3$}] (21) at (1,1) {};
\node [draw, circle, fill=white, inner sep=1pt, label=above:{\tiny $V_{4}$}] (31) at (2,1) {};
\node [draw, circle, fill=white, inner sep=1pt, label=above:{\tiny $V_{s-1}$}] (41) at (4,1) {};
\node [draw, circle, fill=white, inner sep=1pt, label=above:{\tiny $V_{s}$}] (51) at (5,1) {};		
\node [draw, circle, fill=black, inner sep=.1pt] (250) at (2.75,0) {};		
\node [draw, circle, fill=black, inner sep=.1pt] (340) at (3,0) {};
\node [draw, circle, fill=black, inner sep=.1pt] (350) at (3.25,0) {};
\node [draw, circle, fill=black, inner sep=.1pt] (251) at (3,1) {};		
\node [draw, circle, fill=black, inner sep=.1pt] (321) at (3.25,1) {};
\node [draw, circle, fill=black, inner sep=.1pt] (351) at (2.75,1) {};		
\draw [ -<-] (00)--(01);
\draw [ -<-] (20)--(21);
\draw [ -<-] (50)--(51);
\draw [ -<-] (30)--(31);
\draw [color=red,line width=2pt,->-] (00)--(21);
\draw [color=red,line width=2pt,->-] (20)--(31);
\draw [ -<-] (40)--(41);
\draw [color=red,line width=2pt,->-] (40)--(51);
\end{tikzpicture}
\end{tabular}
\end{center}
for some $V'_3 \subseteq V_2$ with  $|V'_3|=|V_1|$,	where for each $i\in [s]$, $V'_{i}, V'_{i+1} \subseteq V_i$ and 
\[\m_{V_{i+1}}=\m_{V'_{i+1}}\mid\m_{V_i}.\]
Then
\[\m_{V_{1}}=\m_{V'_{s+1}}\mid\m_{V'_s}\mid \m_{V'_{s-1}}\mid \cdots\mid\m_{V'_3}\mid\m_{V_2}.\]
Hence $\m_{V_1} = \m_{V_2} = \cdots = \m_{V_s}=\m_{V'_3} = \m_{V'_4} = \cdots =\m_{V'_s}=\bu$. Now let
\begin{align*}
&T_i=\phi(V_i) \qfor i\in[s],\\ 
&T'_j=\phi(V'_j) \qfor j\in\{3,\cdots,s+1\}.
\end{align*} 
Then we have the following directed path from $T_s \subseteq T_1$ to $T_2$ showing that $\sigma_{T_2} \preceq \sigma_{T_1}$. 

\begin{center}
\begin{tabular}{ccccccc}
\begin{tikzpicture}
\node [draw, circle, fill=white, inner sep=1pt, label=below:{\tiny $T_s$}] (00) at (0,0) {};
\node [draw, circle, fill=white, inner sep=1pt, label=below:{\tiny $T_{s-1}$}] (20) at (1,0) {};
\node [draw, circle, fill=white, inner sep=1pt, label=below:{\tiny $T_{3}$}] (30) at (3.25,0) {};		
\node [draw, circle, fill=white, inner sep=1pt, label=below:{\tiny $T_{2}$}] (40) at (4.25,0) {};
\node [draw, circle, fill=white, inner sep=1pt, label=above:{\tiny $T'_{s+1}=T_{1}$}] (01) at (0,1) {};
\node [draw, circle, fill=white, inner sep=1pt, label=above:{\tiny $T'_{s}$}] (21) at (1,1) {};
\node [draw, circle, fill=white, inner sep=1pt, label=above:{\tiny $T'_{4}$}] (31) at (3.25,1) {};
\node [draw, circle, fill=white, inner sep=1pt, label=above:{\tiny $T'_{3}$}] (41) at (4.25,1) {};
			
\node [draw, circle, fill=black, inner sep=.1pt] (250) at (1.75,0) {};		
\node [draw, circle, fill=black, inner sep=.1pt] (340) at (2,0) {};
\node [draw, circle, fill=black, inner sep=.1pt] (350) at (2.25,0) {};
\node [draw, circle, fill=black, inner sep=.1pt] (251) at (2.25,1) {};		
\node [draw, circle, fill=black, inner sep=.1pt] (321) at (2,1) {};
\node [draw, circle, fill=black, inner sep=.1pt] (351) at (1.75,1) {};		
\draw [ -<-] (00)--(01);
\draw [ -<-] (20)--(21);
\draw [ -<-] (40)--(41);
\draw [ -<-] (30)--(31);
\draw [color=red,line width=2pt,->-] (00)--(21);
\draw [color=red,line width=2pt,->-] (30)--(41);
\end{tikzpicture}
\end{tabular}
\end{center}
		
We have seen that both cases lead to $\sigma_{T_2} \preceq \sigma_{T_1}$, but this is not possible because in all cases of the matchings in \cref{l:possible}, any two $\M'_\bu$-critical vertices $T_1, T_2$ have the property that $||T_1|-|T_2||\neq 1$ except when $\Delta=\facets{1,23,24,34}$. In this case, we have only the two $\M'_\bu$-critical vertices $T_1=\{1\}$ and $T_2=\{ 24\}$ for which $\sigma_{T_1}\npreceq \sigma_{T_2}$. Hence homogeneous acyclic matching $\M$ is a BW-matching of
$G_I$ and $I$ has a minimal free resolution supported on a CW-complex (see \cref{t:BW}).
	
Since $\M$ is an acyclic homogeneous matching of $G_I$, $\M \cap E(G_J)$ is an acyclic homogeneous matching of $G_J$. For any two $\M \cap E(G_J)$-critical vertices $V_1$ and $V_2$ of $G_J$
where $|V_2|=|V_1|+1$ and $\sigma_{V_1} \preceq \sigma_{V_2}$, we have $V_1,V_2 \in V(G_I^{\M})$, so that $\m_{V_1} \neq \m_{V_2}$. Therefore $\M \cap E(G_J)$ is a BW-matching of $G_J$, that is $J$ has a minimal free resolution supported on a CW-complex. The proof is complete.
\end{proof}
\begin{lists}[{\bf Directed graphs corresponding to simplicial complexes with at most four vertices}]\label{l:possible}
\end{lists}

\begin{center}
\begin{tabular}{ccccc}
\begin{tikzpicture}[scale=0.75]
\node [draw, circle, fill=white, inner sep=1pt, label=left:{\tiny{$\varnothing$}}] (000) at (0,0) {};
\end{tikzpicture} &&
\begin{tikzpicture}[scale=0.75]
\node [draw, circle, fill=white, inner sep=1pt, label=left:{\tiny{$\varnothing$}}] (000) at (0,0) {};
\node [draw, circle, fill=white, inner sep=1pt, label=right:{\tiny{$1$}}] (100) at (3,0) {};
\draw[color=red,line width=2pt,->- ] (000)--(100);
\end{tikzpicture} &&
 \begin{tikzpicture}[scale=0.75]
 \node [draw, circle, fill=white, inner sep=1pt, label=left:{\tiny{$\varnothing$}}] (000) at (0,0) {};
 \node [draw, circle, fill=white, inner sep=1pt, label=right:{\tiny{$1$}}] (100) at (2,0) {};
 \node [draw, circle, fill=white, inner sep=1pt,
  label=left:{\tiny{$2$}}] (010) at (0,2) {};
\draw [color=red,line width=2pt,->-](000)--(100);
\draw [ ->-] (010)--(000);
\end{tikzpicture}\\
$\Delta=\facets{}$&&$\Delta=\facets{1}$&&$\Delta=\facets{1,2}$\\\\
\begin{tikzpicture}[scale=0.75]
\node [draw, circle, fill=white, inner sep=1pt, label=left:{\tiny{$\varnothing$}}] (000) at (0,0) {};
\node [draw, circle, fill=white, inner sep=1pt,
label=right:{\tiny{$1$}}] (100) at (3,0) {};
\node [draw, circle, fill=white, inner sep=1pt,
label=left:{\tiny{$2$}}] (010) at (0,3) {};
\node [draw, circle, fill=white, inner sep=1pt,
label=right:{\tiny{$12$}}] (110) at (3,3) {};
\draw[color=red,line width=2pt, ->-] (000)--(100);
\draw [ ->-](010)--(000);
\draw [->-](110)--(100);
\draw [color=red,line width=2pt, ->-](010)--(110);
\end{tikzpicture}&&
 \begin{tikzpicture}[scale=0.75]
 \node [draw, circle, fill=white, inner sep=1pt, label=left:{\tiny{$\varnothing$}}] (000) at (0,0) {};
 \node [draw, circle, fill=white, inner sep=1pt, 
 label=right:{\tiny{$1$}}] (100) at (3,0) {};
 \node [draw, circle, fill=white, inner sep=1pt,
  label=left:{\tiny{$2$}}] (010) at (0,3) {};
 \node [draw, circle, fill=white, inner sep=1pt, 
 label=below:{\tiny{$3$}}] (001) at (1,1) {};
\draw[color=red,line width=2pt, ->-] (000)--(100);
\draw [ ->-](010)--(000);
\draw [ ->-](001)--(000);
 \end{tikzpicture} &&
\begin{tikzpicture}[scale=0.75]
\node [draw, circle, fill=white, inner sep=1pt, label=left:{\tiny{$\varnothing$}}] (000) at (0,0) {};
\node [draw, circle, fill=white, inner sep=1pt, 
label=right:{\tiny{$1$}}] (100) at (3,0) {};
\node [draw, circle, fill=white, inner sep=1pt,
 label=left:{\tiny{$2$}}] (010) at (0,3) {};
\node [draw, circle, fill=white, inner sep=1pt, 
label=below:{\tiny{$3$}}] (001) at (1,1) {};
\node [draw, circle, fill=white, inner sep=1pt,
label=right:{\tiny{$12$}}] (110) at (3,3) {};
\node [draw, circle, fill=white, inner sep=1pt,
label=below:{\tiny{$13$}}] (101) at (2,1) {};
\node [draw, circle, fill=white, inner sep=1pt,
label=above:{\tiny{$23$}}] (011) at (1,2) {};
\node [draw, circle, fill=white, inner sep=1pt,
label=above:{\tiny{$123$}}] (111) at (2,2) {};
\draw[color=red,line width=2pt, ->-] (000)--(100);
\draw [ -<-](000)--(010);
\draw [ -<-](000)--(001);
\draw [ -<-](100)--(110);
\draw [ -<-](100)--(101);
\draw [color=red,line width=2pt, ->-](010)--(110);
\draw [ -<-](010)--(011);
\draw [color=red,line width=2pt, ->-](001)--(101);
\draw[ -<-] (001)--(011);
\draw [ -<-](110)--(111);
\draw[ -<-] (101)--(111);
\draw[color=red,line width=2pt, ->-] (011)--(111);       
  \end{tikzpicture}\\
  $\Delta=\facets{12}$&&$\Delta_=\facets{1,2,3}$&&$\Delta=\facets{123}$\\\\
\begin{tikzpicture}[scale=0.75]
     \node [draw, circle, fill=white, inner sep=1pt, label=left:{\tiny{$\varnothing$}}] (000) at (0,0) {};
     \node [draw, circle, fill=white, inner sep=1pt, 
     label=right:{\tiny{$1$}}] (100) at (3,0) {};
     \node [draw, circle, fill=white, inner sep=1pt,
      label=left:{\tiny{$2$}}] (010) at (0,3) {};
     \node [draw, circle, fill=white, inner sep=1pt, 
     label=below:{\tiny{$3$}}] (001) at (1,1) {}; 
     \node [draw, circle, fill=white, inner sep=1pt,
     label=above:{\tiny{$23$}}] (011) at (1,2) {};
     
  \draw [ -<-](000)--(100);
  \draw[color=red,line width=2pt, ->-] (000)--(010);
  \draw [ -<-](000)--(001);
  \draw[ -<-] (010)--(011);
  \draw [color=red,line width=2pt, ->-](001)--(011);
      \end{tikzpicture} &&
  \begin{tikzpicture}[scale=0.75]
  \node [draw, circle, fill=white, inner sep=1pt, label=left:{\tiny{$\varnothing$}}] (000) at (0,0) {};
  \node [draw, circle, fill=white, inner sep=1pt, 
  label=right:{\tiny{$1$}}] (100) at (3,0) {};
  \node [draw, circle, fill=white, inner sep=1pt,
   label=left:{\tiny{$2$}}] (010) at (0,3) {};
  \node [draw, circle, fill=white, inner sep=1pt, 
  label=below:{\tiny{$3$}}] (001) at (1,1) {};
  \node [draw, circle, fill=white, inner sep=1pt,
  label=below:{\tiny{$13$}}] (101) at (2,1) {};
  \node [draw, circle, fill=white, inner sep=1pt,
  label=above:{\tiny{$23$}}] (011) at (1,2) {};
\draw [ -<-](000)--(100);
\draw [ -<-](000)--(010);
\draw [color=red,line width=2pt,->-] (000)--(001);
\draw [color=red,line width=2pt,->-](100)--(101);
\draw[color=red,line width=2pt,->-] (010)--(011);
\draw[ -<-](001)--(101);
\draw [ -<-](001)--(011);
      \end{tikzpicture} &&
      \begin{tikzpicture}[scale=0.75]
      \node [draw, circle, fill=white, inner sep=1pt, label=left:{\tiny{$\varnothing$}}] (000) at (0,0) {};
      \node [draw, circle, fill=white, inner sep=1pt, 
      label=right:{\tiny{$1$}}] (100) at (3,0) {};
      \node [draw, circle, fill=white, inner sep=1pt,
       label=left:{\tiny{$2$}}] (010) at (0,3) {};
      \node [draw, circle, fill=white, inner sep=1pt, 
      label=below:{\tiny{$3$}}] (001) at (1,1) {};
      \node [draw, circle, fill=white, inner sep=1pt,
      label=right:{\tiny{$12$}}] (110) at (3,3) {};
      \node [draw, circle, fill=white, inner sep=1pt,
      label=below:{\tiny{$13$}}] (101) at (2,1) {};
      \node [draw, circle, fill=white, inner sep=1pt,
      label=above:{\tiny{$23$}}] (011) at (1,2) {};
     
   \draw [ -<-](000)--(100);
   \draw[ -<-] (000)--(010);
   \draw[color=red,line width=2pt,->-] (000)--(001);
   \draw [ -<-](100)--(110);
   \draw  [color=red,line width=2pt,->-](100)--(101);
   \draw [ -<-](010)--(110);
   \draw[color=red,line width=2pt,->-] (010)--(011);
   \draw[ -<-](001)--(101);
   \draw [ -<-](001)--(011);
          \end{tikzpicture} \\
      $\Delta=\facets{1,23}$&&$\Delta=\facets{13,23}$&&$\Delta=\facets{12,13,23}$\\\\
\begin{tikzpicture}[scale=0.75]
      \node [draw, circle, fill=white, inner sep=1pt,
      label=below:{\tiny{$\varnothing$}}] (0000) at (0,0) {};
      \node [draw, circle, fill=white, inner sep=1pt,
      label=below:{\tiny{$1$}}] (1000) at (3,0) {};
      \node [draw, circle, fill=white, inner sep=1pt,
      label=above:{\tiny{$2$}}] (0100) at (0,3) {};
      \node [draw, circle, fill=white, inner sep=1pt,
      label=below:{\tiny{$3$}}] (0010) at (1,1) {};
      
   \draw[color=red,line width=2pt,->-] (0000)--(1000);
   \draw[ -<-] (0000)--(0100);
   \draw[ -<-] (0000)--(0010);
        
      \node [draw, circle, fill=white, inner sep=1pt,
     label=below:{\tiny{$4$}}  ] (0001) at (4,0) {};
      
   \draw [ -<-](0000) to [out=15, in=165] (0001);

      \end{tikzpicture}&&
         \begin{tikzpicture}[scale=0.75]
         \node [draw, circle, fill=white, inner sep=1pt,
         label=below:{\tiny{$\varnothing$}}] (0000) at (0,0) {};
         \node [draw, circle, fill=white, inner sep=1pt,
         label=below:{\tiny{$4$}}] (1000) at (3,0) {};
         \node [draw, circle, fill=white, inner sep=1pt,
         label=above:{\tiny{$2$}}] (0100) at (0,3) {};
         \node [draw, circle, fill=white, inner sep=1pt,
         label=below:{\tiny{$3$}}] (0010) at (1,1) {};
         \node [draw, circle, fill=white, inner sep=1pt,
           label=below:{\tiny{$34$}}] (1010) at (2,1) {};
        
      \draw[color=red,line width=2pt,->-] (0000)--(1000);
      \draw [ -<-](0000)--(0100);
      \draw [ -<-](0000)--(0010);
      \draw [ -<-](1000)--(1010);
      \draw[color=red,line width=2pt,->-] (0010)--(1010);

         \node [draw, circle, fill=white, inner sep=1pt,
           label=below:{\tiny{$1$}}] (0001) at (4,0) {};
         \node [draw, circle, fill=white, inner sep=1pt,
           label=above:{\tiny{$12$}}] (0101) at (4,3) {};
         
      \draw [ -<-](0001)--(0101);
         
      \draw [ -<-](0000) to [out=15, in=165] (0001);
         
      \draw[color=red,line width=2pt,->-] (0100) to [out=15, in=165] (0101);

         \end{tikzpicture} &&
         \begin{tikzpicture}[scale=0.75]
               \node [draw, circle, fill=white, inner sep=1pt,
               label=below:{\tiny{$\varnothing$}}] (0000) at (0,0) {};
               \node [draw, circle, fill=white, inner sep=1pt,
               label=below:{\tiny{$4$}}] (1000) at (3,0) {};
               \node [draw, circle, fill=white, inner sep=1pt,
               label=above:{\tiny{$2$}}] (0100) at (0,3) {};
               \node [draw, circle, fill=white, inner sep=1pt,
               label=below:{\tiny{$3$}}] (0010) at (1,1) {};
               \node [draw, circle, fill=white, inner sep=1pt,
                label=above:{\tiny{$24$}}] (1100) at (3,3) {};
               \node [draw, circle, fill=white, inner sep=1pt,
                label=below:{\tiny{$34$}}] (1010) at (2,1) {};
               \node [draw, circle, fill=white, inner sep=1pt,
                label=above:{\tiny{$23$}}] (0110) at (1,2) {};
               \node [draw, circle, fill=white, inner sep=1pt,
                label=above:{\tiny{$234$}}] (1110) at (2,2) {};
               
\draw [color=red,line width=2pt,->-](0000)--(1000);
\draw [ -<-](0000)--(0100);
\draw [ -<-](0000)--(0010);
\draw [ -<-](1000)--(1100);
\draw[ -<-] (1000)--(1010);
\draw [color=red,line width=2pt,->-] (0100)--(1100);
\draw [ -<-](0100)--(0110);
\draw  [color=red,line width=2pt,->-](0010)--(1010);
\draw[ -<-] (0010)--(0110);
\draw [ -<-](1100)--(1110);
\draw [ -<-](1010)--(1110);
\draw [color=red,line width=2pt,->-] (0110)--(1110);
               
 \node [draw, circle, fill=white, inner sep=1pt,label=below:{\tiny{$1$}}] (0001) at (4,0) {};
                      
\draw [ -<-](0000) to [out=15, in=165] (0001);
               
\end{tikzpicture} \\
  $\Delta=\facets{1,2,3,4}$&&$\Delta=\facets{12,34}$&&$\Delta=\facets{1,234}$\\\\
\begin{tikzpicture}[scale=0.75]
 \node [draw, circle, fill=white, inner sep=1pt,
 label=below:{\tiny{$\varnothing$}}] (0000) at (0,0) {};
 \node [draw, circle, fill=white, inner sep=1pt,
 label=below:{\tiny{$4$}}] (1000) at (3,0) {};
 \node [draw, circle, fill=white, inner sep=1pt,label=above:{\tiny{$2$}}] (0100) at (0,3) {};
\node [draw, circle, fill=white, inner sep=1pt,label=below:{\tiny{$3$}}] (0010) at (1,1) {};
\node [draw, circle, fill=white, inner sep=1pt,label=below:{\tiny{$34$}}] (1010) at (2,1) {};
               
\draw [color=red,line width=2pt,->-](0000)--(1000);
\draw [ -<-](0000)--(0100);
\draw[ -<-] (0000)--(0010);
\draw [ -<-](1000)--(1010);
\draw  [color=red,line width=2pt,->-](0010)--(1010);
              
 \node [draw, circle, fill=white, inner sep=1pt,
 label=below:{\tiny{$1$}}  ] (0001) at (4,0) {};
                       
\draw [ -<-](0000) to [out=15, in=165] (0001);
  
\end{tikzpicture}&&
\begin{tikzpicture}[scale=0.75]
\node [draw, circle, fill=white, inner sep=1pt,label=below:{\tiny{$\varnothing$}}] (0000) at (0,0) {};
\node [draw, circle, fill=white, inner sep=1pt,label=below:{\tiny{$4$}}] (1000) at (3,0) {};
\node [draw, circle, fill=white, inner sep=1pt,label=above:{\tiny{$2$}}] (0100) at (0,3) {};
\node [draw, circle, fill=white, inner sep=1pt,label=below:{\tiny{$3$}}] (0010) at (1,1) {};
\node [draw, circle, fill=white, inner sep=1pt,label=above:{\tiny{$24$}}] (1100) at (3,3) {};
\node [draw, circle, fill=white, inner sep=1pt,label=below:{\tiny{$34$}}] (1010) at (2,1) {};

\draw [color=red,line width=2pt,->-](0000)--(1000);
\draw [ -<-] (0000)--(0100);
\draw [ -<-] (0000)--(0010);
\draw [ -<-](1000)--(1100);
\draw [ -<-](1000)--(1010);
\draw [color=red,line width=2pt,->-] (0100)--(1100);
            
\draw  [color=red,line width=2pt,->-](0010)--(1010);

 \node [draw, circle, fill=white, inner sep=1pt,
  label=below:{\tiny{$1$}}  ] (0001) at (4,0) {};
                       
\draw [ -<-](0000) to [out=15, in=165] (0001);
                
      \end{tikzpicture} &&
      \begin{tikzpicture}[scale=0.75]
 \node [draw, circle, fill=white, inner sep=1pt,label=below:{\tiny{$\varnothing$}}] (0000) at (0,0) {};
 \node [draw, circle, fill=white, inner sep=1pt,label=below:{\tiny{$4$}}] (1000) at (3,0) {};
 \node [draw, circle, fill=white, inner sep=1pt,label=above:{\tiny{$3$}}] (0100) at (0,3) {};
\node [draw, circle, fill=white, inner sep=1pt,label=below:{\tiny{$2$}}] (0010) at (1,1) {};
\node [draw, circle, fill=white, inner sep=1pt,label=above:{\tiny{$34$}}] (1100) at (3,3) {};
\node [draw, circle, fill=white, inner sep=1pt, label=below:{\tiny{$24$}}] (1010) at (2,1) {};
                
\draw [color=red,line width=2pt,->-](0000)--(1000);
\draw[ -<-] (0000)--(0100);
\draw [ -<-](0000)--(0010);
\draw[ -<-] (1000)--(1100);
\draw [ -<-](1000)--(1010);
\draw [color=red,line width=2pt,->-] (0100)--(1100);
\draw  [color=red,line width=2pt,->-](0010)--(1010);

\node [draw, circle, fill=white, inner sep=1pt,label=below:{\tiny{$1$}}] (0001) at (4,0) {};
      
\node [draw, circle, fill=white, inner sep=1pt,label=above:{\tiny{$13$}}] (0101) at (4,3) {};
\draw[color=red,line width=2pt,->-] (0001)--(0101); 
\draw[ -<-] (0000) to [out=15, in=165] (0001);   
\draw [ -<-](0100) to [out=15, in=165] (0101);
      \end{tikzpicture} \\
      $\Delta=\facets{1,2,34}$&&$\Delta=\facets{1,24,34}$&&$\Delta=\facets{13,24,34}$\\\\
      &&&&\\
    \end{tabular}
\end{center}

\begin{center}
\begin{tabular}{ccccc}
\begin{tikzpicture}[scale=0.75]
 \node [draw, circle, fill=white, inner sep=1pt,
label=below:{\tiny{$\varnothing$}}] (0000) at (0,0) {};
\node [draw, circle, fill=white, inner sep=1pt,label=below:{\tiny{$3$}}] (1000) at (3,0) {};
\node [draw, circle, fill=white, inner sep=1pt,label=above:{\tiny{$4$}}] (0100) at (0,3) {};
\node [draw, circle, fill=white, inner sep=1pt,label=below:{\tiny{$2$}}] (0010) at (1,1) {};
\node [draw, circle, fill=white, inner sep=1pt,label=above:{\tiny{$34$}}] (1100) at (3,3) {};
\node [draw, circle, fill=white, inner sep=1pt, label=above:{\tiny{$24$}}] (0110) at (1,2) {};
                  
\draw [ -<-](0000)--(1000);
\draw[color=red,line width=2pt,->-] (0000)--(0100);
\draw[ -<-] (0000)--(0010);
\draw [color=red,line width=2pt,->-](1000)--(1100);
\draw[ -<-] (0100)--(1100);
\draw [ -<-] (0100)--(0110);
\draw  [color=red,line width=2pt,->-](0010)--(0110);

\node [draw, circle, fill=white, inner sep=1pt,label=below:{\tiny{$1$}}] (0001) at (4,0) {};
        
\node [draw, circle, fill=white, inner sep=1pt,label=above:{\tiny{$14$}}] (0101) at (4,3) {};
\draw[color=red,line width=2pt,->-] (0001)--(0101); 
\draw [ -<-](0000) to [out=15, in=165] (0001);   
\draw [ -<-](0100) to [out=15, in=165] (0101);
   \end{tikzpicture} &&
   \begin{tikzpicture}[scale=0.75]
\node [draw, circle, fill=white, inner sep=1pt,label=below:{\tiny{$\varnothing$}}] (0000) at (0,0) {};
\node [draw, circle, fill=white, inner sep=1pt,label=below:{\tiny{$4$}}] (1000) at (3,0) {};
\node [draw, circle, fill=white, inner sep=1pt,label=above:{\tiny{$2$}}] (0100) at (0,3) {};
\node [draw, circle, fill=white, inner sep=1pt,label=below:{\tiny{$3$}}] (0010) at (1,1) {};
\node [draw, circle, fill=white, inner sep=1pt,label=above:{\tiny{$24$}}] (1100) at (3,3) {};
\node [draw, circle, fill=white, inner sep=1pt,
label=below:{\tiny{$34$}}] (1010) at (2,1) {};
\node [draw, circle, fill=white, inner sep=1pt,
label=above:{\tiny{$23$}}] (0110) at (1,2) {};
              
\draw[ -<-](0000)--(1000);
\draw [ -<-](0000)--(0100);
\draw [ color=red,line width=2pt,->-](0000)--(0010);
\draw [-<-](1000)--(1100);
\draw [ color=red,line width=2pt,->-](1000)--(1010);
\draw [ -<-] (0100)--(1100);
\draw[color=red,line width=2pt,->-] (0100)--(0110);
\draw [-<-] (0010)--(1010);
\draw [ -<-](0010)--(0110);
              
\node [draw, circle, fill=white, inner sep=1pt,
label=below:{\tiny{$1$}}  ] (0001) at (4,0) {};
                      
\draw [-<-] (0000) to [out=15, in=165] (0001);
     \end{tikzpicture} \\
     $\Delta=\facets{14,24,34}$&&$\Delta=\facets{1,23,24,34}$\\\\
%
\begin{tikzpicture}[scale=0.75]
\node [draw, circle, fill=white, inner sep=1pt,
label=below:{\tiny{$\varnothing$}}] (0000) at (0,0) {};
 \node [draw, circle, fill=white, inner sep=1pt,
label=below:{\tiny{$3$}}] (1000) at (3,0) {};
\node [draw, circle, fill=white, inner sep=1pt,label=above:{\tiny{$2$}}] (0100) at (0,3) {};
\node [draw, circle, fill=white, inner sep=1pt,
label=below:{\tiny{$4$}}] (0010) at (1,1) {};
\node [draw, circle, fill=white, inner sep=1pt,label=below:{\tiny{$34$}}] (1010) at (2,1) {};
 \node [draw, circle, fill=white, inner sep=1pt,label=above:{\tiny{$24$}}] (0110) at (1,2) {};
                 
\draw[ -<-](0000)--(1000);
\draw[ -<-] (0000)--(0100);
\draw[ -<-] (0000)--(0010);
                 
\draw [ -<-]  (1000)--(1010);
               
\draw[ -<-](0100)--(0110);
\draw[color=red,line width=2pt,->-] (0010)--(1010);
\draw  [ -<-](0010)--(0110);
\node [draw, circle, fill=white, inner sep=1pt,
 label=below:{\tiny{$1$}}  ] (0001) at (4,0) {};
 \node [draw, circle, fill=white, inner sep=1pt,
 label=above:{\tiny{$12$}}] (0101) at (4,3) {};           
 \node [draw, circle, fill=white, inner sep=1pt,
 label=below:{\tiny{$13$}}] (1001) at (7,0) {};
\draw[ -<-] (0001)--(0101);
\draw [ -<-]  (0001)--(1001);
\draw  [color=red,line width=2pt,->-](0100) to [out=15, in=165] (0101);
\draw[ color=red,line width=2pt,->-] (1000) to [out=15, in=165] (1001); 
\draw [ color=red,line width=2pt,->-] (0000) to [out=15, in=165] (0001);
\end{tikzpicture}&&
 \begin{tikzpicture}[scale=0.75]
\node [draw, circle, fill=white, inner sep=1pt,
 label=below:{\tiny{$\varnothing$}}] (0000) at (0,0) {};
 \node [draw, circle, fill=white, inner sep=1pt,
label=below:{\tiny{$3$}}] (1000) at (3,0) {};
\node [draw, circle, fill=white, inner sep=1pt,
label=above:{\tiny{$4$}}] (0100) at (0,3) {};
\node [draw, circle, fill=white, inner sep=1pt,
label=below:{\tiny{$2$}}] (0010) at (1,1) {};
\node [draw, circle, fill=white, inner sep=1pt,
 label=above:{\tiny{$34$}}] (1100) at (3,3) {};
\node [draw, circle, fill=white, inner sep=1pt,
label=above:{\tiny{$24$}}] (0110) at (1,2) {};
\node [draw, circle, fill=white, inner sep=1pt,
 label=below:{\tiny{$23$}}] (1010) at (2,1) {};
\draw [ -<-](0000)--(1000);
\draw [ -<-](0000)--(0100);
\draw [ -<-](0000)--(0010);
\draw [ color=red,line width=2pt,->-](1000)--(1100);
\draw[ -<-](0100)--(1100);
\draw  [color=red,line width=2pt,->-](0010)--(1010);  
\draw[ -<-] (1000)--(1010); 
\draw  [ -<-] (0100)--(0110);
\draw  [ -<-](0010)--(0110);

\node [draw, circle, fill=white, inner sep=1pt,
label=below:{\tiny{$1$}}] (0001) at (4,0) {};
         
\node [draw, circle, fill=white, inner sep=1pt,
label=above:{\tiny{$14$}}] (0101) at (4,3) {};
\draw[ -<-] (0001)--(0101); 
\draw  [ color=red,line width=2pt,->-](0000) to [out=15, in=165] (0001);   
\draw [ color=red,line width=2pt,->-](0100) to [out=15, in=165] (0101);
\end{tikzpicture}\\
 $\Delta=\facets{12,13,24,34}$&&$\Delta=\facets{14,23,24,34}$\\\\
%
  \begin{tikzpicture}[scale=0.75]
\node [draw, circle, fill=white, inner sep=1pt,
 label=below:{\tiny{$\varnothing$}}] (0000) at (0,0) {};
 \node [draw, circle, fill=white, inner sep=1pt,
label=below:{\tiny{$3$}}] (1000) at (3,0) {};
 \node [draw, circle, fill=white, inner sep=1pt,
label=above:{\tiny{$4$}}] (0100) at (0,3) {};
 \node [draw, circle, fill=white, inner sep=1pt,
 label=below:{\tiny{$2$}}] (0010) at (1,1) {};
\node [draw, circle, fill=white, inner sep=1pt,
 label=above:{\tiny{$34$}}] (1100) at (3,3) {};
 \node [draw, circle, fill=white, inner sep=1pt,
 label=above:{\tiny{$24$}}] (0110) at (1,2) {};
 \node [draw, circle, fill=white, inner sep=1pt,
 label=below:{\tiny{$23$}}] (1010) at (2,1) {};
\draw [ -<-](0000)--(1000);
\draw [ -<-](0000)--(0100);
\draw [color=red,line width=2pt,->-](0000)--(0010);
\draw[ -<-](1000)--(1100);
\draw[ -<-] (0100)--(1100);
\draw [ -<-](0010)--(1010);  
\draw[color=red,line width=2pt,->-] (1000)--(1010); 
\draw [color=red,line width=2pt,->-]  (0100)--(0110);
\draw  [ -<-] (0010)--(0110);

 \node [draw, circle, fill=white, inner sep=1pt,
label=below:{\tiny{$1$}}] (0001) at (4,0) {};
\node [draw, circle, fill=white, inner sep=1pt,
  label=below:{\tiny{$13$}}] (1001) at (7,0) {};
           \node [draw, circle, fill=white, inner sep=1pt,
           label=above:{\tiny{$14$}}] (0101) at (4,3) {};
\draw [color=red,line width=2pt,->-](0001)--(1001);
\draw [ -<-](0001)--(0101); 
\draw [ -<-](0000) to [out=15, in=165] (0001);   
\draw [ -<-](1000) to [out=15, in=165] (1001); 
\draw [ -<-] (0100) to [out=15, in=165] (0101);
\end{tikzpicture}&&
\begin{tikzpicture}[scale=0.75]
 \node [draw, circle, fill=white, inner sep=1pt,
 label=below:{\tiny{$\varnothing$}}] (0000) at (0,0) {};
 \node [draw, circle, fill=white, inner sep=1pt,
 label=below:{\tiny{$3$}}] (1000) at (3,0) {};
 \node [draw, circle, fill=white, inner sep=1pt,
label=above:{\tiny{$4$}}] (0100) at (0,3) {};
 \node [draw, circle, fill=white, inner sep=1pt,
label=below:{\tiny{$2$}}] (0010) at (1,1) {};
 \node [draw, circle, fill=white, inner sep=1pt,
  label=above:{\tiny{$34$}}] (1100) at (3,3) {};
\node [draw, circle, fill=white, inner sep=1pt,
label=above:{\tiny{$24$}}] (0110) at (1,2) {};
\node [draw, circle, fill=white, inner sep=1pt,
label=below:{\tiny{$23$}}] (1010) at (2,1) {};
\draw [color=red,line width=2pt,->-](0000)--(1000);
\draw[ -<-] (0000)--(0100);
\draw [ -<-](0000)--(0010);
\draw[ -<-](1000)--(1100);
\draw [color=red,line width=2pt,->-](0100)--(1100);
\draw [color=red,line width=2pt,->-](0010)--(1010);  
\draw [ -<-](1000)--(1010); 
\draw [ -<-]  (0100)--(0110);
\draw [ -<-](0010)--(0110);

\node [draw, circle, fill=white, inner sep=1pt,
 label=below:{\tiny{$1$}}] (0001) at (4,0) {};
\node [draw, circle, fill=white, inner sep=1pt,
 label=below:{\tiny{$13$}}] (1001) at (7,0) {};
  \node [draw, circle, fill=white, inner sep=1pt,
  label=above:{\tiny{$14$}}] (0101) at (4,3) {};
 \node [draw, circle, fill=white, inner sep=1pt,
label=above:{\tiny{$12$}} ] (0011) at (5,1) {};
\draw[ -<-](0001)--(0011);
\draw [color=red,line width=2pt,->-](0001)--(1001);
\draw [ -<-](0001)--(0101); 
\draw [ -<-](0000) to [out=15, in=165] (0001);   
\draw [ -<-](1000) to [out=15, in=165] (1001); 
\draw [ -<-](0010) to [out=15, in=165] (0011); 
\draw [ -<-] (0100) to [out=15, in=165] (0101);
 \end{tikzpicture}\\
$\Delta=\facets{13,14,23,24,34}$&&$\Delta=\facets{12,13,14,23,24,34}$\\\\
%
\begin{tikzpicture}[scale=0.75]
 \node [draw, circle, fill=white, inner sep=1pt,
label=below:{\tiny{$\varnothing$}}] (0000) at (0,0) {};
\node [draw, circle, fill=white, inner sep=1pt,
 label=below:{\tiny{$3$}}] (1000) at (3,0) {};
\node [draw, circle, fill=white, inner sep=1pt,
 label=above:{\tiny{$4$}}] (0100) at (0,3) {};
 \node [draw, circle, fill=white, inner sep=1pt,
label=below:{\tiny{$2$}}] (0010) at (1,1) {};
\node [draw, circle, fill=white, inner sep=1pt,
 label=above:{\tiny{$34$}}] (1100) at (3,3) {};
 \node [draw, circle, fill=white, inner sep=1pt,
 label=above:{\tiny{$24$}}] (0110) at (1,2) {};
\node [draw, circle, fill=white, inner sep=1pt,
 label=below:{\tiny{$23$}}] (1010) at (2,1) {};
 \node [draw, circle, fill=white, inner sep=1pt,
label=above:{\tiny{$234$}}] (1110) at (2,2) {};
\draw[color=red,line width=2pt,->-] (1010)--(1110); 
\draw [ -<-](0110)--(1110);
\draw [ -<-](1100)--(1110);
\draw [ -<-](0000)--(1000);
\draw [ -<-](0000)--(0100);
\draw [color=red,line width=2pt,->-](0000)--(0010);
\draw[-<-](1000)--(1100);
\draw [ -<-](0100)--(1100);
\draw [ -<-](0010)--(1010);  
\draw[ -<-] (1000)--(1010); 
\draw [color=red,line width=2pt,->-](0100)--(0110);
\draw   [-<-](0010)--(0110);
                
 \node [draw, circle, fill=white, inner sep=1pt,
 label=below:{\tiny{$1$}}] (0001) at (4,0) {};
\node [draw, circle, fill=white, inner sep=1pt,
 label=below:{\tiny{$13$}}] (1001) at (7,0) {};
 \node [draw, circle, fill=white, inner sep=1pt,
label=above:{\tiny{$14$}}] (0101) at (4,3) {};
\node [draw, circle, fill=white, inner sep=1pt,
label=above:{\tiny{$12$}} ] (0011) at (5,1) {};
\draw[ color=red,line width=2pt,->-](0001)--(0011);
\draw [-<-](0001)--(1001);
\draw [ -<-](0001)--(0101); 
\draw[ -<-] (0000) to [out=15, in=165] (0001);   
\draw[ color=red,line width=2pt,->-] (1000) to [out=15, in=165] (1001); 
\draw[ -<-] (0010) to [out=15, in=165] (0011); 
\draw [ -<-] (0100) to [out=15, in=165] (0101);
\end{tikzpicture}&&
\begin{tikzpicture}[scale=0.75]
\node [draw, circle, fill=white, inner sep=1pt,
label=below:{\tiny{$\varnothing$}}] (0000) at (0,0) {};
\node [draw, circle, fill=white, inner sep=1pt,
label=below:{\tiny{$3$}}] (1000) at (3,0) {};
\node [draw, circle, fill=white, inner sep=1pt,
 label=above:{\tiny{$4$}}] (0100) at (0,3) {};
 \node [draw, circle, fill=white, inner sep=1pt,
 label=below:{\tiny{$2$}}] (0010) at (1,1) {};
\node [draw, circle, fill=white, inner sep=1pt,
 label=above:{\tiny{$34$}}] (1100) at (3,3) {};
\node [draw, circle, fill=white, inner sep=1pt,
 label=above:{\tiny{$24$}}] (0110) at (1,2) {};
 \node [draw, circle, fill=white, inner sep=1pt,
 label=below:{\tiny{$23$}}] (1010) at (2,1) {};
\node [draw, circle, fill=white, inner sep=1pt,
 label=above:{\tiny{$234$}}] (1110) at (2,2) {};
\draw[color=red,line width=2pt,->-] (1010)--(1110); 
\draw [-<-](0110)--(1110);
\draw [-<-](1100)--(1110);
\draw[-<-] (0000)--(1000);
\draw [color=red,line width=2pt,->-] (0000)--(0100);
\draw [-<-](0000)--(0010);
\draw[color=red,line width=2pt,->-](1000)--(1100);
\draw [-<-](0100)--(1100);
\draw [-<-](0010)--(1010);  
\draw [-<-](1000)--(1010); 
\draw   [-<-](0100)--(0110);
\draw   [color=red,line width=2pt,->-](0010)--(0110);
                  
\node [draw, circle, fill=white, inner sep=1pt,
 label=below:{\tiny{$1$}}] (0001) at (4,0) {};
 \node [draw, circle, fill=white, inner sep=1pt,
label=above:{\tiny{$14$}}] (0101) at (4,3) {};
              
\draw [color=red,line width=2pt,->-] (0001)--(0101); 
\draw[-<-] (0000) to [out=15, in=165] (0001);   
\draw [-<-] (0100) to [out=15, in=165] (0101);
\end{tikzpicture}\\
   $\Delta=\facets{12,13,14,234}$&&$\Delta=\facets{14,234}$\\\\             
      &&&&\\
    \end{tabular}
\end{center}

\begin{center}
\begin{tabular}{ccccc}
\begin{tikzpicture}[scale=0.75]
  \node [draw, circle, fill=white, inner sep=1pt,
  label=below:{\tiny{$\varnothing$}}] (0000) at (0,0) {};
  \node [draw, circle, fill=white, inner sep=1pt,
  label=below:{\tiny{$3$}}] (1000) at (3,0) {};
  \node [draw, circle, fill=white, inner sep=1pt,
  label=above:{\tiny{$4$}}] (0100) at (0,3) {};
  \node [draw, circle, fill=white, inner sep=1pt,
  label=below:{\tiny{$2$}}] (0010) at (1,1) {};
  \node [draw, circle, fill=white, inner sep=1pt,
  label=above:{\tiny{$34$}}] (1100) at (3,3) {};
  \node [draw, circle, fill=white, inner sep=1pt,
  label=above:{\tiny{$24$}}] (0110) at (1,2) {};
  \node [draw, circle, fill=white, inner sep=1pt,
  label=below:{\tiny{$23$}}] (1010) at (2,1) {};
  \node [draw, circle, fill=white, inner sep=1pt,
  label=above:{\tiny{$234$}}] (1110) at (2,2) {};
\draw[color=red,line width=2pt,->-] (1010)--(1110); 
\draw [ -<-](0110)--(1110);
\draw [ -<-](1100)--(1110);
\draw [ -<-](0000)--(1000);
\draw [ -<-](0000)--(0100);
\draw [color=red,line width=2pt,->-](0000)--(0010);
\draw[-<-](1000)--(1100);
\draw [ -<-](0100)--(1100);
\draw [ -<-](0010)--(1010);  
\draw[ -<-] (1000)--(1010); 
\draw [color=red,line width=2pt,->-](0100)--(0110);
\draw   [-<-](0010)--(0110);

  \node [draw, circle, fill=white, inner sep=1pt,
  label=below:{\tiny{$1$}}] (0001) at (4,0) {};
  \node [draw, circle, fill=white, inner sep=1pt,
  label=below:{\tiny{$13$}}] (1001) at (7,0) {};
  \node [draw, circle, fill=white, inner sep=1pt,
  label=above:{\tiny{$14$}}] (0101) at (4,3) {};

\draw [-<-](0001)--(1001);
\draw [ color=red,line width=2pt,->-](0001)--(0101); 
\draw[ -<-] (0000) to [out=15, in=165] (0001);   
\draw[ color=red,line width=2pt,->-] (1000) to [out=15, in=165] (1001); 
   
\draw [ -<-] (0100) to [out=15, in=165] (0101);
  \end{tikzpicture}&&
\begin{tikzpicture}[scale=0.75]
 \node [draw, circle, fill=white, inner sep=1pt,
 label=below:{\tiny{$\varnothing$}}] (0000) at (0,0) {};
\node [draw, circle, fill=white, inner sep=1pt,
label=below:{\tiny{$3$}}] (1000) at (3,0) {};
 \node [draw, circle, fill=white, inner sep=1pt,
 label=above:{\tiny{$4$}}] (0100) at (0,3) {};
\node [draw, circle, fill=white, inner sep=1pt,
 label=below:{\tiny{$2$}}] (0010) at (1,1) {};
\node [draw, circle, fill=white, inner sep=1pt,
label=above:{\tiny{$34$}}] (1100) at (3,3) {};
\node [draw, circle, fill=white, inner sep=1pt,
label=above:{\tiny{$24$}}] (0110) at (1,2) {};
\node [draw, circle, fill=white, inner sep=1pt,
label=below:{\tiny{$23$}}] (1010) at (2,1) {};
\node [draw, circle, fill=white, inner sep=1pt,
label=above:{\tiny{$234$}}] (1110) at (2,2) {};
\draw[color=red,line width=2pt,->-] (1010)--(1110); 
\draw [-<-](0110)--(1110);
\draw [-<-](1100)--(1110);
\draw [-<-](0000)--(1000);
\draw  [color=red,line width=2pt,->-](0000)--(0100);
\draw [-<-](0000)--(0010);
\draw[color=red,line width=2pt,->-](1000)--(1100);
\draw [-<-](0100)--(1100);
\draw[-<-] (0010)--(1010);  
\draw [-<-](1000)--(1010); 
\draw   [-<-](0100)--(0110);
\draw   [color=red,line width=2pt,->-](0010)--(0110);
                                
\node [draw, circle, fill=white, inner sep=1pt,
label=below:{\tiny{$1$}}] (0001) at (4,0) {};
\node [draw, circle, fill=white, inner sep=1pt,
label=below:{\tiny{$13$}}] (1001) at (7,0) {};
\node [draw, circle, fill=white, inner sep=1pt,
label=above:{\tiny{$14$}}] (0101) at (4,3) {};
 \node [draw, circle, fill=white, inner sep=1pt,
label=above:{\tiny{$134$}}   ] (1101) at (7,3) {};
\draw [color=red,line width=2pt,->-] (1001)--(1101); 
\draw [-<-](0101)--(1101); 
\draw [-<-](0001)--(1001);
\draw [color=red,line width=2pt,->-] (0001)--(0101); 
\draw [-<-](0000) to [out=15, in=165] (0001);   
\draw [-<-](1000) to [out=15, in=165] (1001); 
\draw [-<-](1100) to [out=15, in=165] (1101);
\draw [-<-] (0100) to [out=15, in=165] (0101);
\end{tikzpicture}\\           
$\Delta=\facets{13,14,234}$&&$\Delta=\facets{134,234}$\\\\
%
\begin{tikzpicture}[scale=0.75]
 \node [draw, circle, fill=white, inner sep=1pt,
 label=below:{\tiny{$\varnothing$}}] (0000) at (0,0) {};
 \node [draw, circle, fill=white, inner sep=1pt,
label=below:{\tiny{$3$}}] (1000) at (3,0) {};
 \node [draw, circle, fill=white, inner sep=1pt,
label=above:{\tiny{$4$}}] (0100) at (0,3) {};
\node [draw, circle, fill=white, inner sep=1pt,
label=below:{\tiny{$2$}}] (0010) at (1,1) {};
\node [draw, circle, fill=white, inner sep=1pt,
label=above:{\tiny{$34$}}] (1100) at (3,3) {};
 \node [draw, circle, fill=white, inner sep=1pt,
label=above:{\tiny{$24$}}] (0110) at (1,2) {};
\node [draw, circle, fill=white, inner sep=1pt,
label=below:{\tiny{$23$}}] (1010) at (2,1) {};
\node [draw, circle, fill=white, inner sep=1pt,
label=above:{\tiny{$234$}}] (1110) at (2,2) {};
\draw[color=red,line width=2pt,->-] (1010)--(1110); 
\draw [-<-](0110)--(1110);
\draw[-<-] (1100)--(1110);
\draw [-<-](0000)--(1000);
\draw  [color=red,line width=2pt,->-](0000)--(0100);
\draw[-<-] (0000)--(0010);
\draw[color=red,line width=2pt,->-](1000)--(1100);
\draw [-<-](0100)--(1100);
\draw [-<-](0010)--(1010);  
\draw [-<-](1000)--(1010); 
\draw  [-<-] (0100)--(0110);
\draw   [color=red,line width=2pt,->-](0010)--(0110);
                                
\node [draw, circle, fill=white, inner sep=1pt,
label=below:{\tiny{$1$}}] (0001) at (4,0) {};
 \node [draw, circle, fill=white, inner sep=1pt,
label=below:{\tiny{$13$}}] (1001) at (7,0) {};
\node [draw, circle, fill=white, inner sep=1pt,
label=above:{\tiny{$14$}}] (0101) at (4,3) {};
\node [draw, circle, fill=white, inner sep=1pt,
label=above:{\tiny{$134$}}   ] (1101) at (7,3) {};
\node [draw, circle, fill=white, inner sep=1pt,
label=above:{\tiny{$12$}} ] (0011) at (5,1) {};
\draw[-<-](0001)--(0011);
\draw [color=red,line width=2pt,->-] (1001)--(1101); 
\draw[-<-] (0101)--(1101); 
\draw [-<-](0001)--(1001);
\draw [color=red,line width=2pt,->-] (0001)--(0101); 
\draw [-<-](0000) to [out=15, in=165] (0001);   
\draw [-<-](1000) to [out=15, in=165] (1001); 
\draw[-<-] (1100) to [out=15, in=165] (1101);
\draw[-<-] (0010) to [out=15, in=165] (0011);
\draw [-<-] (0100) to [out=15, in=165] (0101);
 \end{tikzpicture}&&
  \begin{tikzpicture}[scale=0.75]
\node [draw, circle, fill=white, inner sep=1pt,
label=below:{\tiny{$\varnothing$}}] (0000) at (0,0) {};
\node [draw, circle, fill=white, inner sep=1pt,
 label=below:{\tiny{$3$}}] (1000) at (3,0) {};
\node [draw, circle, fill=white, inner sep=1pt,
label=above:{\tiny{$4$}}] (0100) at (0,3) {};
\node [draw, circle, fill=white, inner sep=1pt,
 label=below:{\tiny{$2$}}] (0010) at (1,1) {};
 \node [draw, circle, fill=white, inner sep=1pt,
 label=above:{\tiny{$34$}}] (1100) at (3,3) {};
\node [draw, circle, fill=white, inner sep=1pt,
label=above:{\tiny{$24$}}] (0110) at (1,2) {};
\node [draw, circle, fill=white, inner sep=1pt,
 label=below:{\tiny{$23$}}] (1010) at (2,1) {};
\node [draw, circle, fill=white, inner sep=1pt,
label=above:{\tiny{$234$}}] (1110) at (2,2) {};
\draw[color=red,line width=2pt,->-] (1010)--(1110); 
\draw [-<-](0110)--(1110);
\draw [-<-](1100)--(1110);
\draw [-<-](0000)--(1000);
\draw  [color=red,line width=2pt,->-](0000)--(0100);
\draw [-<-](0000)--(0010);
\draw[color=red,line width=2pt,->-](1000)--(1100);       \draw [-<-](0100)--(1100);
\draw [-<-](0010)--(1010);  
\draw [-<-](1000)--(1010); 
\draw [-<-]  (0100)--(0110);
\draw   [color=red,line width=2pt,->-](0010)--(0110);

\node [draw, circle, fill=white, inner sep=1pt,
label=below:{\tiny{$1$}}] (0001) at (4,0) {};
\node [draw, circle, fill=white, inner sep=1pt,
 label=below:{\tiny{$13$}}] (1001) at (7,0) {};
\node [draw, circle, fill=white, inner sep=1pt,
label=above:{\tiny{$14$}}] (0101) at (4,3) {};
 \node [draw, circle, fill=white, inner sep=1pt,
label=above:{\tiny{$134$}}   ] (1101) at (7,3) {};
\node [draw, circle, fill=white, inner sep=1pt,
 label=right:{\tiny{$12$}} ] (0011) at (5,1) {};
\node [draw, circle, fill=white, inner sep=1pt,
label=right:{\tiny{$124$}} ] (0111) at (5,2) {};    
\draw [-<-](0101)--(0111);                
\draw [color=red,line width=2pt,->-] (0011)--(0111);    
\draw[-<-](0001)--(0011);
\draw [color=red,line width=2pt,->-] (1001)--(1101); 
\draw[-<-] (0101)--(1101); 
\draw[-<-] (0001)--(1001);
\draw [color=red,line width=2pt,->-] (0001)--(0101); 
\draw[-<-] (0000) to [out=15, in=165] (0001);   
\draw[-<-] (1000) to [out=15, in=165] (1001); 
\draw [-<-](1100) to [out=15, in=165] (1101);
\draw [-<-](0010) to [out=15, in=165] (0011);
\draw [-<-](0110) to [out=15, in=165] (0111);
\draw [-<-] (0100) to [out=15, in=165] (0101);             
\end{tikzpicture}\\                   
$\Delta=\facets{12,134,234}$&&$\Delta
=\facets{124,134,234}$\\\\
%
 \begin{tikzpicture}[scale=0.75]
\node [draw, circle, fill=white, inner sep=1pt,
 label=below:{\tiny{$\varnothing$}}] (0000) at (0,0) {};
\node [draw, circle, fill=white, inner sep=1pt,
label=below:{\tiny{$3$}}] (1000) at (3,0) {};
\node [draw, circle, fill=white, inner sep=1pt,
label=above:{\tiny{$4$}}] (0100) at (0,3) {};
\node [draw, circle, fill=white, inner sep=1pt,
label=below:{\tiny{$2$}}] (0010) at (1,1) {};
\node [draw, circle, fill=white, inner sep=1pt,
label=above:{\tiny{$34$}}] (1100) at (3,3) {};
\node [draw, circle, fill=white, inner sep=1pt,
 label=above:{\tiny{$24$}}] (0110) at (1,2) {};
\node [draw, circle, fill=white, inner sep=1pt,
label=below:{\tiny{$23$}}] (1010) at (2,1) {};
\node [draw, circle, fill=white, inner sep=1pt,
label=above:{\tiny{$234$}}] (1110) at (2,2) {};
\draw[-<-] (1010)--(1110); 
\draw[color=red,line width=2pt,->-] (0110)--(1110);
\draw [-<-](1100)--(1110);
\draw[-<-] (0000)--(1000);
\draw  [color=red,line width=2pt,->-](0000)--(0100);
\draw[-<-] (0000)--(0010);
\draw[color=red,line width=2pt,->-](1000)--(1100);
\draw [-<-](0100)--(1100);
\draw[color=red,line width=2pt,->-](0010)--(1010);  
\draw [-<-](1000)--(1010); 
\draw [-<-]  (0100)--(0110);
\draw [-<-] (0010)--(0110);

\node [draw, circle, fill=white, inner sep=1pt,
label=below:{\tiny{$1$}}] (0001) at (4,0) {};
\node [draw, circle, fill=white, inner sep=1pt,
label=below:{\tiny{$13$}}] (1001) at (7,0) {};
\node [draw, circle, fill=white, inner sep=1pt,
label=above:{\tiny{$14$}}] (0101) at (4,3) {};
\node [draw, circle, fill=white, inner sep=1pt,
label=above:{\tiny{$134$}}   ] (1101) at (7,3) {};
\node [draw, circle, fill=white, inner sep=1pt,
label=below:{\tiny{$12$}} ] (0011) at (5,1) {};
\node [draw, circle, fill=white, inner sep=1pt,
label=above:{\tiny{$124$}} ] (0111) at (5,2) {};    
\node [draw, circle, fill=white, inner sep=1pt,
label=below:{\tiny{$123$}} ] (1011) at (6,1) {};               
\draw [color=red,line width=2pt,->-] (0011)--(1011);
\draw [-<-](1001)--(1011);  
\draw [-<-](0101)--(0111);                
\draw [-<-](0011)--(0111);    
\draw[-<-](0001)--(0011);
\draw [-<-](1001)--(1101); 
\draw [color=red,line width=2pt,->-] (0101)--(1101); 
\draw [color=red,line width=2pt,->-] (0001)--(1001);
\draw [-<-] (0001)--(0101); 
\draw [-<-](0000) to [out=15, in=165] (0001);   
\draw [-<-](1000) to [out=15, in=165] (1001); 
\draw [-<-](1100) to [out=15, in=165] (1101);
\draw [-<-](0010) to [out=15, in=165] (0011);
\draw  [-<-](0110) to [out=15, in=165] (0111);
\draw  [-<-](0100) to [out=15, in=165] (0101);
\draw [-<-] (1010) to [out=15, in=165] (1011);
\end{tikzpicture}&&
 \begin{tikzpicture}[scale=0.75]
\node [draw, circle, fill=white, inner sep=1pt,
label=below:{\tiny{$\varnothing$}}] (0000) at (0,0) {};
\node [draw, circle, fill=white, inner sep=1pt,
label=below:{\tiny{$3$}}] (1000) at (3,0) {};
\node [draw, circle, fill=white, inner sep=1pt,
label=above:{\tiny{$4$}}] (0100) at (0,3) {};
\node [draw, circle, fill=white, inner sep=1pt,
label=below:{\tiny{$2$}}] (0010) at (1,1) {};
\node [draw, circle, fill=white, inner sep=1pt,
label=above:{\tiny{$34$}}] (1100) at (3,3) {};
\node [draw, circle, fill=white, inner sep=1pt,
label=above:{\tiny{$24$}}] (0110) at (1,2) {};
\node [draw, circle, fill=white, inner sep=1pt,
label=below:{\tiny{$23$}}] (1010) at (2,1) {};
\node [draw, circle, fill=white, inner sep=1pt,
label=above:{\tiny{$234$}}] (1110) at (2,2) {};
\draw[-<-] (1010)--(1110); 
\draw[color=red,line width=2pt,->-] (0110)--(1110);
\draw [-<-](1100)--(1110);
\draw[-<-] (0000)--(1000);
\draw  [color=red,line width=2pt,->-](0000)--(0100);
\draw[-<-] (0000)--(0010);
\draw[color=red,line width=2pt,->-](1000)--(1100);
\draw [-<-](0100)--(1100);
\draw[color=red,line width=2pt,->-](0010)--(1010);  
\draw [-<-](1000)--(1010); 
\draw [-<-]  (0100)--(0110);
\draw [-<-] (0010)--(0110);

\node [draw, circle, fill=white, inner sep=1pt,
label=below:{\tiny{$1$}}] (0001) at (4,0) {};
\node [draw, circle, fill=white, inner sep=1pt,
label=below:{\tiny{$13$}}] (1001) at (7,0) {};
\node [draw, circle, fill=white, inner sep=1pt,
label=above:{\tiny{$14$}}] (0101) at (4,3) {};
\node [draw, circle, fill=white, inner sep=1pt,
label=above:{\tiny{$134$}}   ] (1101) at (7,3) {};
\node [draw, circle, fill=white, inner sep=1pt,
label=below:{\tiny{$12$}} ] (0011) at (5,1) {};
\node [draw, circle, fill=white, inner sep=1pt,
label=above:{\tiny{$124$}} ] (0111) at (5,2) {};    
\node [draw, circle, fill=white, inner sep=1pt,
label=below:{\tiny{$123$}} ] (1011) at (6,1) {};               
\draw [color=red,line width=2pt,->-] (0011)--(1011);
\draw [-<-](1001)--(1011);  
\draw [-<-](0101)--(0111);                
\draw [-<-](0011)--(0111);    
\draw[-<-](0001)--(0011);
\draw [-<-](1001)--(1101); 
\draw [color=red,line width=2pt,->-] (0101)--(1101); 
\draw [color=red,line width=2pt,->-] (0001)--(1001);
\draw [-<-] (0001)--(0101); 
\draw [-<-](0000) to [out=15, in=165] (0001);   
\draw [-<-](1000) to [out=15, in=165] (1001); 
\draw [-<-](1100) to [out=15, in=165] (1101);
\draw [-<-](0010) to [out=15, in=165] (0011);
\draw  [-<-](0110) to [out=15, in=165] (0111);
\draw  [-<-](0100) to [out=15, in=165] (0101);
\draw [-<-] (1010) to [out=15, in=165] (1011);
\node [draw, circle, fill=white, inner sep=1pt,
label=above:{\tiny{$1234$}} ] (1111) at (6,2) {};               
\draw [color=red,line width=2pt,->-] (0111)--(1111); 
\draw [-<-](1011)--(1111);
\draw [-<-](1101)--(1111);
                      
\draw [-<-] (1110) to [out=15, in=165] (1111);
                          
\end{tikzpicture}\\
$\Delta=\facets{123,124,134,234}$&&$\Delta=\facets{1234}$
\end{tabular}
\end{center}
\begin{example}\label{ex:Roya1}
Let $S=\KK[x_1, x_2, x_3, x_4]$ be a polynomial ring in four variables and 
\[J=(x_{1}^2x_{2}^2, x_1x_3, x_{1}^3x_4), \qand I=J+(x_{1}^4, x_{2}^3, x_{3}^2, x_{4}^2).\]
By the proof of \cref{t:main11} and \cref{ex:Roya11}, the matching $\M=\bigcup_{\bu\in\LCM(I)}\M_\bu$ of $G_I$ is as follows:
\begin{small}
\begin{equation*}
\begin{matrix}
({\{1234567\}},{\{124567\}})   &  ({\{134567\}},{\{14567\}})   &  ({\{234567\}},{\{24567\}})  &     ({\{123456\}},{\{13456\}})\\  
({\{123457\}},{\{12457\}})  &
({\{123467\}},{\{12467\}})  &  ({\{123567\}},{\{13567\}})  &
({\{12456\}},{\{1456\}})\\
({\{34567\}},{\{4567\}}) & ({\{13457\}},{\{1457\}})  & ({\{12567\}},{\{1567\}})  &
({\{23456\}},{\{3456\}})\\
({\{23457\}},{\{2457\}})  &
({\{23467\}},{\{2467\}})  &
({\{23567\}},{\{3567\}})  &
({\{12345\}},{\{2345\}})\\
	({\{12356\}},{\{1356\}})  &
	({\{12347\}},{\{1247\}})  &
	({\{13467\}},{\{1467\}})  &
	({\{12346\}},{\{1346\}})\\
	({\{12367\}},{\{1367\}})  &
	({\{12357\}},{\{2357\}})  &
	({\{2456\}},{\{456\}})  &
	({\{1245\}},{\{245\}})\\
	({\{1246\}},{\{146\}})  &
	({\{1256\}},{\{156\}})  &
	({\{3457\}},{\{457\}})  &
	({\{3467\}},{\{467\}})\\
	({\{1345\}},{\{345\}})  & 
	({\{1347\}},{\{147\}})  &
	({\{1357\}},{\{357\}})  &
	({\{1267\}},{\{167\}})\\
	({\{2346\}},{\{346\}})  &
	({\{2356\}},{\{356\}})  &
	({\{2347\}},{\{247\}})  &
	({\{2367\}},{\{367\}})\\
	({\{1235\}},{\{235\}})  &
	({\{1236\}},{\{136\}})  &
	({\{246\}},{\{46\}})  &
	({\{145\}},{\{45\}})\\
	({\{126\}},{\{16\}})  &
	({\{347\}},{\{47\}})  &
	({\{135\}},{\{35\}})  &
	({\{236\}},{\{36\}})
	\end{matrix} 
\end{equation*}
\end{small}
	
By \cref{t:main11}, $\M$ is a BW-matching of $G_I$ and $\M$-critical vertices of $G_I^\M$ are
\begin{equation*}
	\begin{matrix}
	{\{2567\}}  &{\{1257\}}  &{\{1234\}}  &
	{\{1237\}} &{\{256\}}\\
	{\{124\}}  &{\{125\}}  &{\{134\}}  &{\{137\}}  &{\{157\}}\\
	{\{567\}}  &{\{257\}}  &{\{267\}}  &{\{127\}}  &{\{234\}}\\
	{\{237\}}  &{\{123\}}  &{\{14\}}  &
	{\{24\}}  &{\{34\}}\\
	{\{56\}}  &{\{57\}}  &{\{15\}}  &{\{25\}}  &{\{67\}}\\
	{\{26\}}  &{\{17\}}  &{\{27\}} &{\{37\}}  &{\{12\}}\\
	{\{13\}}  &{\{23\}}  &{\{1\}}  &{\{2\}}  &{\{3\}}\\ 
	{\{4\}}  &{\{5\}}  &{\{6\}}  &{\{7\}}
	\end{matrix}
\end{equation*}
So $I$ has minimal free resolutions supported on a CW-complex. The $i$-cells of  this CW-complex are in one-to-one correspondence with the $\M$-critical vertices of $G_I$ of cardinality $i+1$.
	
Also, $G_J$ is the following directed graph
\begin{center}
\begin{tabular}{ccccccc}
\begin{tikzpicture}[scale=0.75]
\node [draw, circle, fill=white, inner sep=1pt, label=left:{\tiny{$\varnothing$}}] (000) at (0,0) {};
\node [draw, circle, fill=white, inner sep=1pt, 
label=right:{\tiny{$3$}}] (100) at (3,0) {};
\node [draw, circle, fill=white, inner sep=1pt,
label=left:{\tiny{$1$}}] (010) at (0,3) {};
\node [draw, circle, fill=white, inner sep=1pt, 
label=below:{\tiny{$2$}}] (001) at (1,1) {};
\node [draw, circle, fill=white, inner sep=1pt,
label=right:{\tiny{$13$}}] (110) at (3,3) {};
\node [draw, circle, fill=white, inner sep=1pt,
label=below:{\tiny{$23$}}] (101) at (2,1) {};
\node [draw, circle, fill=white, inner sep=1pt,
label=above:{\tiny{$12$}}] (011) at (1,2) {};
\node [draw, circle, fill=white, inner sep=1pt,
label=above:{\tiny{$123$}}] (111) at (2,2) {};
			
\draw[-<-] (000)--(100);
\draw [-<-](000)--(010);
\draw [-<-](000)--(001);
\draw [-<-](100)--(110);
\draw [-<-](100)--(101);
\draw [-<-](010)--(110);
\draw [-<-](010)--(011);
\draw [-<-](001)--(101);
\draw [-<-](001)--(011);
\draw [-<-](110)--(111);
\draw [-<-](101)--(111);
\draw[-<-] (011)--(111);       
\end{tikzpicture}
\end{tabular}
\end{center}
We have $\M \cap E(G_J)=\varnothing$, hence $\M \cap E(G_J)$ is a BW-matching of $G_J$ from which it follows that $J$ has a minimal free resolution supported on a CW-complex. In fact, every vertex of $G_J$ is critical so that the Taylor's resolution of $J$ is minimal.
\end{example}

\begin{example}\label{ex:Roya111}
Let $S=\KK[x_1, x_2, x_3, x_4]$ be a polynomial ring in 4 variables and 
\[J=(x_{1}x_{2}, x_1x_3, x_{2}x_4), \qand I=J+(x_{1}^2, x_{2}^2, x_{3}^2, x_{4}^2).\]
By the proof of \cref{t:main11} and \cref{ex:Roya11}, the matching $\M=\bigcup_{\bu\in\LCM(I)}\M_\bu$ of $G_I$ is as follows:
\begin{small}
\begin{equation*}
	\begin{matrix}
	({\{1234567\}},{\{124567\}})   &  ({\{134567\}},{\{14567\}})   &  ({\{234567\}},{\{24567\}})  &     ({\{123456\}},{\{13456\}})\\  
	({\{123457\}},{\{12457\}})  &
	({\{123467\}},{\{12467\}})  &  ({\{123567\}},{\{13567\}})  &
	({\{12456\}},{\{1456\}})\\
	({\{34567\}},{\{4567\}}) & ({\{13457\}},{\{1457\}})  & ({\{12567\}},{\{1567\}})  &
	({\{23456\}},{\{3456\}})\\
	({\{23457\}},{\{2457\}})  &
	({\{23467\}},{\{3467\}})  &
	({\{23567\}},{\{2567\}})  &
	({\{12345\}},{\{2345\}})\\
	({\{12356\}},{\{1356\}})  &
	({\{12347\}},{\{1247\}})  &
	({\{13467\}},{\{1467\}})  &
	({\{12346\}},{\{1346\}})\\
	({\{12367\}},{\{1367\}})  &
	({\{12357\}},{\{2357\}})  &
	({\{12456\}},{\{456\}})  &
	({\{1245\}},{\{245\}})\\
	({\{1246\}},{\{146\}})  &
	({\{1256\}},{\{156\}})  &
	({\{3457\}},{\{457\}})  &
	({\{2467\}},{\{467\}})\\
	({\{1345\}},{\{345\}})  & 
	({\{1347\}},{\{147\}})  &
	({\{1357\}},{\{157\}})  &
	({\{1267\}},{\{167\}})\\
	({\{2346\}},{\{346\}})  &
	({\{1236\}},{\{136\}})  &
	({\{1257\}},{\{257\}})  &
	({\{1234\}},{\{234\}})\\  
	({\{1235\}},{\{235\}})  &
	({\{3567\}},{\{567\}})  &
({\{1237\}},{\{237\}}) &
	({\{145\}},{\{45\}})\\
	({\{126\}},{\{16\}})  &
	({\{125\}},{\{25\}})  &
	({\{137\}},{\{17\}})  &
	({\{123\}},{\{23\}})\\
	({\{134\}},{\{34\}})  &
	({\{357\}},{\{57\}}) &
	 ({\{246\}},{\{46\}})
	\end{matrix} 
\end{equation*}
\end{small}

By \cref{t:main11}, $\M$ is a BW-matching of $G_I$ and $\M$-critical vertices of $G_I^\M$ are
\begin{equation*}
	\begin{matrix}
	{\{2356\}}  &{\{2347\}}  &{\{2367\}}  &
	{\{356\}} &{\{247\}}\\
	{\{367\}}  &{\{347\}}  &{\{135\}}  &{\{236\}}  &{\{256\}}\\
	{\{124\}}  &{\{267\}}  &{\{127\}}  &{\{35\}}  &{\{36\}}\\
	{\{47\}}  &{\{14\}}  &{\{24\}}  &
	{\{56\}}  &{\{15\}}\\
	{\{67\}}  &{\{26\}}  &{\{27\}}  &{\{37\}}  &{\{12\}}\\
	{\{13\}}  &{\{1\}}  &{\{2\}}  &{\{3\}}  &{\{4\}}\\ 
	{\{5\}}  &{\{6\}}  &{\{7\}}
	\end{matrix}
\end{equation*}
This implies that $I$ has a minimal free resolution supported on a CW-complex. Also, $G_J$ is the following directed graph
\begin{center}
\begin{tabular}{ccccccc}
\begin{tikzpicture}[scale=0.75]
\node [draw, circle, fill=white, inner sep=1pt, label=left:{\tiny{$\varnothing$}}] (000) at (0,0) {};
\node [draw, circle, fill=white, inner sep=1pt, 
label=right:{\tiny{$3$}}] (100) at (3,0) {};
\node [draw, circle, fill=white, inner sep=1pt,
label=left:{\tiny{$1$}}] (010) at (0,3) {};
\node [draw, circle, fill=white, inner sep=1pt, 
label=below:{\tiny{$2$}}] (001) at (1,1) {};
\node [draw, circle, fill=white, inner sep=1pt,
label=right:{\tiny{$13$}}] (110) at (3,3) {};
\node [draw, circle, fill=white, inner sep=1pt,
label=below:{\tiny{$23$}}] (101) at (2,1) {};
\node [draw, circle, fill=white, inner sep=1pt,
label=above:{\tiny{$12$}}] (011) at (1,2) {};
\node [draw, circle, fill=white, inner sep=1pt,
label=above:{\tiny{$123$}}] (111) at (2,2) {};
			
\draw[-<-] (000)--(100);
\draw [-<-](000)--(010);
\draw [-<-](000)--(001);
\draw [-<-](100)--(110);
\draw [-<-](100)--(101);
\draw [-<-](010)--(110);
\draw [-<-](010)--(011);
\draw [-<-](001)--(101);
\draw [-<-](001)--(011);
\draw [-<-](110)--(111);
\draw [color=red,line width=2pt,->-](101)--(111);
\draw[-<-] (011)--(111);       
\end{tikzpicture}
\end{tabular}
\end{center}
We have $\M \cap E(G_J)=(\{123\},\{23\})$, and hence $\M \cap E(G_J)$ is a BW-matching of $G_J$. It follows that $J$ has a minimal free resolution supported on a CW-complex.
\end{example}
\section{(Artinian reductions of) monomial ideals with $2$ generators}\label{s:2-gen}
In this section, we find an explicit BW-matching of $G_I$ when $I$ is an Artinian reduction of a $2$-generated monomial ideal. As a result, we give a description of a CW-complex supporting a minimal free resolution of $I$, and compute the multigraded Betti numbers of $I$.

Using \cref{n:setup} with $r=2$, we somewhat simplify the notation as follows:
\begin{equation}\label{r=2}
 I=(u_1, u_2, {x_1}^{e_1},\ldots,{x_n}^{e_n}),\quad
 u_{1}=x_1^{a_1}\cdots x_n^{a_n}, \qand u_{2}=x_1^{b_1}\cdots
 x_n^{b_n},
\end{equation}
where $a_i+b_i>0$ for all $i\in[n]$. We first partition the variables $x_1,\ldots,x_n$ into three sets $P_0$, $P_1$, and $P_2$ as follows:
\begin{equation}\label{eq:P1}
\begin{aligned}
P_0&:=\{i+2 \colon\ {a_{i}}={b_{i}}\},\\
P_1&:=\{i+2 \colon\ {a_{i}}>{b_{i}}\},\\
P_2&:=\{i+2 \colon\ {a_{i}}<{b_{i}}\}.
\end{aligned}
\end{equation}
Put
\begin{equation}\label{eq:P2}
\begin{aligned}
A&:=\{i+2 \colon\ x_i\mid u_1\}=\{i+2 \colon\ {a_{i}}>0\}\supseteq P_0\cup P_1,\\
B&:=\{i+2 \colon\ x_i\mid u_2\}=\{i+2 \colon\ {b_{i}}>0\}\supseteq P_0 \cup P_2.
\end{aligned}
\end{equation}

\begin{example} 
Let $S=\KK[x_1, x_2, x_3, x_4, x_5]$ be a polynomial ring in five variables and 
\[J=({x_1}{x_2}^2{x_3}{x_4}, {x_1}{x_2}{x_3}^2{x_5}) \qand I=J+(x_1^{2}, x_2^{3}, x_3^{3}, x_4^{2}, x_5^{2}).\]
Then with notation as above
\begin{align*}
   P_0=\{3\},\quad P_1=\{4,6\},\quad 
   P_2=\{5,7\},\quad A=\{3,4,5,6\},\quad  
   B=\{3,4,5,7\}. 
\end{align*}
\end{example}

Observe that using the notation above we have $X:=\{3,\ldots,n+2\}=P_0 \cup P_1 \cup P_2$, and for any subset $T\subseteq[n+2]$ we have:
\begin{equation}\label{eq:mJ2}
\m_{T} =\begin{cases}
\prod\limits_{i+2\in T\cap X}x_i^{e_i}, & \qif 1 \notin T,2 \notin T, \\
&\\
\prod\limits_{i+2\in T\cap X}x_i^{e_i} \cdot 
\prod\limits_{i+2\in X \ssm T} x_i^{b_{i}}, &
\qif 1 \notin T,2 \in T, \\
&\\
 \prod\limits_{i+2\in T\cap X}x_i^{e_i} \cdot
 \prod\limits_{i+2\in X \ssm T} x_i^{a_{i}}, & 
 \qif 1 \in T, 2 \notin T, \\
 &\\
\prod\limits_{i+2\in T\cap X}x_i^{e_i} \cdot
\prod\limits_{i+2\in X \ssm T} x_i^{\max\{a_i,b_i\}}, &
 \qif 1 \in T, 2 \in T.
\end{cases}
\end{equation}

One may easily observe that we can replace $ A\ssm T$ instead of $X \ssm T$ in $\prod_{i+2\in X \ssm T} x_i^{a_{i}}$ and $ B\ssm T$ instead of $X \ssm T$ in $\prod_{i+2\in X \ssm T} x_i^{b_{i}}$ above. As a direct consequence of \eqref{eq:mJ2}, we have the following result.
\begin{lemma}\label{l:mJj}Let $T \subseteq [n+2]$.
\begin{itemize}
\item[\rm (1)]If $1\in T$, then $\m_T=\m_{T\ssm\{1\}}$ if and only if  
 \begin{itemize}
  \item[\rm (i)] $P_1 \cup\{1,2\} \subseteq T$, or
  \item[\rm (ii)] $ A \cup \{1\} \subseteq  T$ and $2\notin T$.
 \end{itemize}
\item[\rm (2)] If $2\in T$, then $\m_ T=\m_{T\ssm\{2\}}$ if and only if  
 \begin{itemize}
  \item[\rm (i)]  $P_2 \cup\{1,2\} \subseteq T$, or
  \item[\rm (ii)] $B \cup \{2\} \subseteq  T$ and $1\notin T$.
 \end{itemize}		
\end{itemize}
\end{lemma}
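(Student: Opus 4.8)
The statement is meant to be read straight off the branch formula \eqref{eq:mJ2}, and that is exactly the plan. The one structural observation that makes everything go through cleanly is that $1,2\notin X$, so for any $T$ with $1\in T$ one has $T\cap X=(T\ssm\{1\})\cap X$ and $X\ssm T=X\ssm(T\ssm\{1\})$; thus $\m_T$ and $\m_{T\ssm\{1\}}$ are built from the \emph{same} exponent data $T\cap X$ and $X\ssm T$, only through different branches of \eqref{eq:mJ2}. Since two monomials are equal iff their exponent vectors agree, after cancelling the common factor $\prod_{i+2\in T\cap X}x_i^{e_i}$ the equation $\m_T=\m_{T\ssm\{1\}}$ becomes a pointwise condition on the variables indexed by $X\ssm T$.

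For (1), assume $1\in T$ and split on $2$. If $2\in T$, then also $2\in T\ssm\{1\}$, so the fourth branch of \eqref{eq:mJ2} computes $\m_T$ and the second branch computes $\m_{T\ssm\{1\}}$; cancelling as above, $\m_T=\m_{T\ssm\{1\}}$ is equivalent to $\max\{a_i,b_i\}=b_i$ for every $i+2\in X\ssm T$, i.e.\ $a_i\le b_i$ there, i.e.\ $(X\ssm T)\cap P_1=\vn$, i.e.\ $P_1\subseteq T$; together with $\{1,2\}\subseteq T$ this is (i). If $2\notin T$, then the third branch computes $\m_T$ and the first branch computes $\m_{T\ssm\{1\}}$, and cancelling reduces the equation to $\prod_{i+2\in X\ssm T}x_i^{a_i}=1$, i.e.\ $a_i=0$ for every $i+2\in X\ssm T$, i.e.\ $(X\ssm T)\cap A=\vn$, i.e.\ $A\subseteq T$; together with $1\in T$, $2\notin T$ this is (ii). The converse implications are immediate: (i) makes the two products over $X\ssm T$ literally equal, and (ii) makes the extra product over $X\ssm T$ trivial.

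Part (2) follows from (1) by the symmetry that interchanges the first two generators $u_1\leftrightarrow u_2$, hence the exponents $a_i\leftrightarrow b_i$, and hence $P_1\leftrightarrow P_2$ and $A\leftrightarrow B$; formula \eqref{eq:mJ2} is invariant under this swap (its second and third branches interchange), so the verbatim argument yields the claimed characterization with $2$ playing the role of $1$. I do not anticipate any real obstacle here; the only thing to keep straight is the bookkeeping of which of the four branches of \eqref{eq:mJ2} governs $T$ and which governs $T\ssm\{1\}$, and this hinges only on the trivial but essential point that deleting $1$ cannot change whether $2\in T$.
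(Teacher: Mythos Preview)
Your proof is correct and follows essentially the same approach as the paper's: both split on whether $2\in T$, read off the relevant branches of \eqref{eq:mJ2}, cancel the common factor over $T\cap X$, and reduce to the pointwise exponent condition on $X\ssm T$. Your write-up is in fact a bit more explicit than the paper's (you spell out why $T\cap X=(T\ssm\{1\})\cap X$ and invoke the $u_1\leftrightarrow u_2$ symmetry for part~(2), whereas the paper just says ``a similar argument'').
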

\begin{proof}
Assume $1\in T$ and $T'=T\ssm\{1\}$. Let $\comp{T}=[n+2]\ssm T$ and define $\comp{T'}$ analogously. Then $T\cap X={T'}\cap X$ and $\comp{T}\cap P_i =\comp{T'}\cap P_i$ for $i=0,1,2$.
\begin{itemize}
\item[(i)] If $2\in T$, then by \eqref{eq:mJ2}, we have:
  \begin{align*}
  \m_T=\m_{T'}
  &\iff \max\{a_i,b_i\}=b_i \text{ for all $i$ with } i+2\in X\ssm T\\
  &\iff (X\ssm{T}) \cap P_1 =\varnothing\\
  &\iff P_1 \cup\{1,2\} \subseteq  T.
  \end{align*}
\item[(ii)] If $2\notin T$, then by \eqref{eq:mJ2}, we have:
\begin{align*}
 \m_T=\m_{T'}
 \iff A\ssm T=\comp{T} \cap A =\varnothing  
  \iff A \cup \{1\} \subseteq  T.
\end{align*}
\end{itemize}
This completes the proof of (1). A similar argument as above yields the required result.
\end{proof}

Now, let $\M$ be the collection of edges of $G_I$ described as below:
\begin{align}
(T, T\ssm\{1\})&\colon 1 \in T, 2 \in T, P_1 \subseteq  T,\label{eq:matching 1}\\
(T, T\ssm\{2\})&\colon 1 \in T, 2 \in T, P_1 \not \subseteq T, P_2 \subseteq  T,\label{eq:matching 2}\\
(T, T\ssm\{1\})&\colon 1 \in T, 2 \notin T, A \subseteq  T,\label{eq:matching 3}\\
(T, T\ssm\{2\})&\colon 1 \notin T, 2 \in T, B \subseteq  T, T \neq [n+2]\ssm\{1\}.\label{eq:matching 4}
\end{align}

In \cite{Ghor} it is shown that $\A$ is indeed the output of \cref{alg:matching in G_I} in \cref{appendix A}.
\cref{alg:matching in G_I} is designed to produce a BW-matching. We also give a direct proof below.

\begin{proposition}[{\bf $\M$ is a homogeneous matching of $G_I$}]\label{p:matching}
 Let $I=(u_1, u_2, {x_1}^{e_1},\ldots,{x_n}^{e_n})$ be an ideal of $S=\KK[x_1,\ldots,x_n]$ as in \eqref{r=2}, and let $\M$ be the set of edges of $G_I$ defined above. Then $\M$ is a homogeneous matching of $G_I$ with $\M$-critical vertices $T \subset [n+2]$ such that
\[\begin{array}{lllllll}
    1 \in T, & 2 \in T
     , &P_1 \not\subseteq T,&
      P_2 \not\subseteq T&\qor\\
    1\in T, & 2 \notin T,
      &P_1 \cup P_2 \subseteq T,& P_0
      \nsubseteq T,&\qor \\
    1\in T, &2 \notin T,
      &P_2 \not\subseteq T,&
     A \nsubseteq T,& \qor \\
    1 \notin T, &2\in T,
     &P_1 \not\subseteq T,& 
      B \nsubseteq T,& \qor\\
    1 \notin T, &2 \notin T,
      &A \nsubseteq T,& 
      B \nsubseteq T.
\end{array}\]
\end{proposition}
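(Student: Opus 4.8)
The plan is to verify the two required properties of $\M$ separately: first that $\M$ is a well-defined matching, second that it is homogeneous, and then to identify the $\M$-critical vertices. The key preliminary observation is that the four defining conditions \eqref{eq:matching 1}--\eqref{eq:matching 4} are mutually exclusive on their source vertices: an edge of type \eqref{eq:matching 1} or \eqref{eq:matching 3} removes the element $1$, while an edge of type \eqref{eq:matching 2} or \eqref{eq:matching 4} removes the element $2$; within each pair, type \eqref{eq:matching 1} requires $2 \in T$ whereas type \eqref{eq:matching 3} requires $2 \notin T$, and symmetrically for \eqref{eq:matching 2} versus \eqref{eq:matching 4}. So from any source vertex $T$ at most one edge of $\M$ emanates. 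For the matching property I must also check that no vertex is simultaneously a source and a target of two distinct edges of $\M$. A target vertex of type \eqref{eq:matching 1} or \eqref{eq:matching 3} does not contain $1$, and a target of \eqref{eq:matching 2} or \eqref{eq:matching 4} does not contain $2$; combined with the fact that every source vertex contains the element being removed, I would run through the finitely many overlap cases and rule each out. The only subtle one is whether a vertex $T$ with $1 \in T$, $2 \notin T$ could be the target of a type-\eqref{eq:matching 2} edge while being the source of a type-\eqref{eq:matching 3} edge; here the type-\eqref{eq:matching 2} edge would have source $T \cup \{2\}$ with $P_1 \not\subseteq T \cup \{2\}$, i.e. $P_1 \not\subseteq T$, but as a type-\eqref{eq:matching 3} source we need $A \subseteq T \supseteq P_0 \cup P_1$, a contradiction. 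I expect all such checks to be short.

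Next I would establish homogeneity, which is where \cref{l:mJj} does all the work. For an edge $(T, T\ssm\{1\})$ of type \eqref{eq:matching 1} we have $1,2 \in T$ and $P_1 \subseteq T$, which is exactly condition (1)(i) of \cref{l:mJj}, so $\m_T = \m_{T \ssm\{1\}}$. For type \eqref{eq:matching 3} we have $1 \in T$, $2 \notin T$, $A \subseteq T$, which is condition (1)(ii), again giving equality of labels. Types \eqref{eq:matching 2} and \eqref{eq:matching 4} are handled identically using part (2) of \cref{l:mJj}: type \eqref{eq:matching 2} gives $2 \in T$, $1 \in T$, $P_2 \subseteq T$, matching (2)(i); type \eqref{eq:matching 4} gives $2 \in T$, $1 \notin T$, $B \subseteq T$, matching (2)(ii). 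Thus every edge of $\M$ connects two vertices with the same monomial label, so $\M$ is homogeneous.

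Finally, for the critical vertices I would argue by elimination: $T$ is $\M$-critical iff it is neither the source nor the target of an edge of $\M$. The cleanest route is to split on the two Boolean flags $1 \in T$ and $2 \in T$, giving four cases, and in each case write down exactly when $T$ fails to be a source (negate the relevant defining conditions) and when $T$ fails to be a target. A vertex $T$ is a target of type \eqref{eq:matching 1}/\eqref{eq:matching 3} precisely when $T \cup \{1\}$ satisfies the corresponding source condition; explicitly, $T$ with $1 \notin T$ is hit by a type-\eqref{eq:matching 1} edge iff $2 \in T$ and $P_1 \subseteq T$, and by a type-\eqref{eq:matching 3} edge iff $2 \notin T$ and $A \subseteq T$ (using $A \cup \{1\} \subseteq T \cup \{1\} \iff A \subseteq T$ since $1 \notin A$). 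Symmetrically for the $2$-removing edges. Assembling these negated conditions case by case reproduces exactly the five displayed lines in the statement; the only place demanding care is the case $1 \in T$, $2 \notin T$, where one must combine "not a source of type \eqref{eq:matching 3}" (i.e. $A \not\subseteq T$) with "not a target of type \eqref{eq:matching 2}" — the latter meaning $T \cup \{2\}$ is not a type-\eqref{eq:matching 2} source, i.e. not ($P_1 \not\subseteq T$ and $P_2 \subseteq T$) — yielding the split into the two sublines ($P_1 \cup P_2 \subseteq T$, $P_0 \not\subseteq T$ versus $P_2 \not\subseteq T$, $A \not\subseteq T$). The main obstacle is purely organizational: keeping the bookkeeping of "source of type $k$" versus "target of type $k$" straight across all four Boolean cases without dropping a subcase. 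I would present it as a table or as four labelled paragraphs to keep it transparent.
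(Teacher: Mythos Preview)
Your proposal is correct and follows essentially the same route as the paper: verify $\M$ is a matching by ruling out the finitely many ways a vertex could lie in two edges, invoke \cref{l:mJj} for homogeneity, and then characterize the critical vertices by a four-way case split on membership of $1$ and $2$ in $T$. One small point worth flagging: the case you single out as ``the only subtle one'' (target of type~\eqref{eq:matching 2}, source of type~\eqref{eq:matching 3}) is not the only place requiring care---the symmetric case (target of type~\eqref{eq:matching 1}, source of type~\eqref{eq:matching 4}) and the double-target case both hinge on the exception clause $T \neq [n+2]\ssm\{1\}$ in \eqref{eq:matching 4}, without which the matching would actually fail; the paper treats these explicitly and you should too when you write it out.
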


\begin{proof} 
To see that $\M$ is a matching, we consider a few scenarios where a vertex of $\M$ might appear in two different edges. 
\begin{itemize}
\item $(T,T\ssm\{1\}), (T,T\ssm\{2\}) \in \M$. In this case, $1 \in T$, $2 \in T$, and the edges are of types \eqref{eq:matching 1} and \eqref{eq:matching 2}, respectively. Thus $P_1 \subseteq T$ and $P_1 \not\subseteq T$, which is a contradiction.
\item $(T',T), (T,T\ssm\{1\}) \in \M$. In this case $1 \in T$ and $1,2\in T'$. Then $(T',T)$ is of type \eqref{eq:matching 2} so that $P_1\nsubseteq T$. Also, $(T,T\setminus\{1\})$ is of type \eqref{eq:matching 1} so that $P_1 \subseteq  T$, a contradiction.
\item $(T',T), (T,T\ssm\{2\}) \in \M$. In this case $2 \in T$ and $1,2\in T'$. Then $(T',T)$ is of type \eqref{eq:matching 1} so that $P_1\subseteq T$. Also, $(T,T\setminus\{2\})$ is of type \eqref{eq:matching 4} so that $B \subseteq  T$. It follows that $T=[n+2]\setminus\{1\}$, which contradicts \eqref{eq:matching 4}.
\item $(T',T), (T'',T) \in \M$. Assume without loss of generality, that $T=T'\ssm\{1\}=T''\ssm\{2\}$. It follows that $1 \notin T''$ and $2 \notin T'$ so that $(T',T)$ and $(T'',T)$ are of types \eqref{eq:matching 3} and \eqref{eq:matching 4}, respectively. Therefore, $A \cup B\subseteq T$. Thus
\[T'=[n+2] \ssm \{2\}, \quad T''=[n+2] \ssm \{1\}, \qand T=[n+2] \ssm \{1,2\},\]
which is a contradiction because the choices of $T$, $T'$, and $T''$ yield $(T',T)\in \M$ and $(T'',T) \notin \M$.
\end{itemize}
The above discussion shows that $\M$ is a matching in $G_I$. Observe that $\M$ is homogeneous by \cref{l:mJj}.

Now we focus on critical vertices. Let $T \subset [n+2]$ be an $\M$-critical vertex of $G_I^\M$. Then by \eqref{eq:matching 1}--\eqref{eq:matching 4}, we have the following cases:
\begin{itemize}
\item[-] If $1, 2 \in T$, then $T\notin V(\M)$ if and only if 
\[P_1 \not \subseteq T \qand P_2 \not \subseteq T.\]

\item[-] If $1 \in T$, $2\notin T$, then $T\notin V(\M)$ if and only if
\[\begin{array}{ll}
    &(P_1 \subseteq T \qor P_2 \nsubseteq T )
\qand A \nsubseteq T \\
\iff & (P_1 \subseteq T \qand A \nsubseteq T) \qor (P_2 \nsubseteq T \qand A \nsubseteq T) \\
\iff & (P_1 \cup P_2 \subseteq T \qand A \nsubseteq T) \qor (P_2 \nsubseteq T \qand A \nsubseteq T)\\
\iff & (P_1 \cup P_2 \subseteq T \qand P_0 \nsubseteq T) \qor (P_2 \nsubseteq T \qand A \nsubseteq T).
\end{array}
\]
\item[-] If $1 \notin T$, $2\in T$, then $T\notin V(\M) $ if and only if
\[B \not \subseteq T \qand  P_1 \not \subseteq T.\]
\item[-] If $1 \notin T$,  $2\notin T$, then $T\notin V(\M) $ if and only if
\[A \not \subseteq T \qand B \not \subseteq T.\]
\end{itemize}
The above discussion shows that the $\A$-critical vertices are exactly the same as in the statement of the theorem.
\end{proof}
The following theorem shows that $\M$ is indeed a BW-matching of $G_I$. 

\begin{theorem}[{\bf $\M$ is a BW-matching }]\label{t:main2}
Let $I=(u_1, u_2, {x_1}^{e_1},\ldots,{x_n}^{e_n})$ be an ideal of $S=\KK[x_1,\ldots,x_n]$ as in \eqref{r=2}, and $\M$ be the  homogeneous matching of $G_I$ defined as above. Then
\begin{itemize}
\item[\rm (i)] $G_I^\M[V(\M)]$ is acyclic;
\item[\rm (ii)] for any two $\M$-critical vertices $T, T' \in V(G_I^\M)$ we have $\m_T \neq \m_{T'}$.
\end{itemize}
In particular, $\M$ is a BW-matching on $G_I$.
\end{theorem}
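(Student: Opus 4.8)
The plan is to establish (i) and (ii); combined with \cref{p:matching}, which already says $\M$ is a homogeneous matching of $G_I$, these are exactly the conditions that make $\M$ a BW-matching in the sense of \cref{q:main}, so the concluding clause is immediate from \cref{t:BW}. It is worth noting that (ii) is stronger than what \cref{q:main} requires: it asserts that $T\mapsto\m_T$ is injective on \emph{all} $\M$-critical vertices, not merely on $\preceq$-comparable pairs in adjacent homological degrees.

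To prove (i), I would argue by contradiction. Suppose $\C$ is a cycle in $G_I^\M$; since $\M$ is homogeneous, \cref{l:cycle} applies and gives that $\C$ has at least six edges, hence at least six distinct vertices. Every edge of $\M$ has the form $(T,T\ssm\{1\})$ or $(T,T\ssm\{2\})$ by \eqref{eq:matching 1}--\eqref{eq:matching 4}, so every reversed (``up'') edge of $G_I^\M$ adds an element of $\{1,2\}$, while every ``down'' edge removes a single element of $[n+2]$. Summing the changes of characteristic vectors around the closed walk $\C$ gives zero, so the multiset of added elements equals the multiset of removed elements; the former lies in $\{1,2\}$, hence so does the latter, and therefore no index of $X=\{3,\dots,n+2\}$ is ever added or removed along $\C$. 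Thus $T\cap X$ is constant on $V(\C)$, say equal to $W$, so $V(\C)\subseteq\{W,W\cup\{1\},W\cup\{2\},W\cup\{1,2\}\}$ has at most four elements, contradicting $|V(\C)|\ge 6$.

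For (ii), I would first observe that $\m_T$ already determines $W:=T\cap X$: by \eqref{eq:mJ2} together with $e_i>\max\{a_i,b_i\}$, the exponent of $x_i$ in $\m_T$ equals $e_i$ precisely when $i+2\in T$. Hence it suffices to fix $W\subseteq X$ and show $T\mapsto\m_T$ is injective on the $\M$-critical vertices inside $\{W,W\cup\{1\},W\cup\{2\},W\cup\{1,2\}\}$. Using that $\m$ is monotone under inclusion, the description of label-equalities in \cref{l:mJj}, and the list of critical vertices in \cref{p:matching}, I would dispose of the possible collisions, of which there are three up to interchanging $u_1$ and $u_2$. If $\m_W=\m_{T'}$ for some $T'\supsetneq W$, then monotonicity forces $\m_W=\m_{W\cup\{1\}}$ or $\m_W=\m_{W\cup\{2\}}$, and \cref{l:mJj} then gives $A\subseteq W$ or $B\subseteq W$, contradicting that $W$ is $\M$-critical (which requires $A\not\subseteq W$ and $B\not\subseteq W$). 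If $\m_{W\cup\{1\}}=\m_{W\cup\{1,2\}}$, then \cref{l:mJj}(2) forces $P_2\subseteq W$; but an $\M$-critical vertex with $1\in T$, $2\notin T$ either has $P_2\not\subseteq T$ (now impossible) or $P_1\cup P_2\subseteq T$, and in the latter case $P_1\subseteq W$, so $W\cup\{1,2\}$ is not critical, since an $\M$-critical vertex containing $\{1,2\}$ must have $P_1\not\subseteq W$. Finally, if $\m_{W\cup\{1\}}=\m_{W\cup\{2\}}$, then \eqref{eq:mJ2} gives $\prod_{i+2\in A\ssm W}x_i^{a_i}=\prod_{i+2\in B\ssm W}x_i^{b_i}$; comparing supports and exponents forces $A\ssm W\subseteq P_0$, and since $P_1\subseteq A$ and $P_0\cap P_1=\varnothing$ this yields $P_1\subseteq W$, contradicting $\M$-criticality of $W\cup\{2\}$ (which, having $1\notin T$ and $2\in T$, requires $P_1\not\subseteq W$). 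So every label-collision among the four vertices over a fixed $W$ involves a non-critical vertex, and (ii) follows.

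The genuinely delicate step is the last one: checking, collision by collision, that each label-equality condition produced by \cref{l:mJj} is incompatible with two of the relevant vertices being simultaneously $\M$-critical. The inclusions $A\supseteq P_0\cup P_1$ and $B\supseteq P_0\cup P_2$ are exactly what makes this work — they are what prevent a critical vertex of ``$W$''- or ``$W\cup\{2\}$''-type from ever sharing its monomial label with a strictly larger critical vertex. Everything else is routine manipulation of \eqref{eq:mJ2}.
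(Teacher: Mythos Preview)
Your proof is correct. Part~(ii) follows the same case analysis as the paper's: both reduce to a fixed $W=T\cap X$ via the exponent condition $e_i>\max\{a_i,b_i\}$ and then compare the four vertices $W,W\cup\{1\},W\cup\{2\},W\cup\{1,2\}$ using \cref{l:mJj} and the critical-vertex list from \cref{p:matching}. One small remark: your phrase ``up to interchanging $u_1$ and $u_2$'' is slightly imprecise because the matching $\M$ (and hence the list of critical vertices) is not symmetric under that swap --- the exception in \eqref{eq:matching 4} and the two separate clauses for $1\in T,\,2\notin T$ in \cref{p:matching} break the symmetry. The missing case $\m_{W\cup\{2\}}=\m_{W\cup\{1,2\}}$ is nonetheless immediate: \cref{l:mJj}(1) gives $P_1\subseteq W$, contradicting criticality of $W\cup\{2\}$.

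Your argument for~(i), however, is genuinely different from the paper's and considerably cleaner. The paper fixes a hypothetical cycle, isolates a path of length five inside it, and then tracks through \eqref{eq:matching 1}--\eqref{eq:matching 4} which of $1,2$ is being added or deleted at each step, eventually pinning down $T=[n+2]\ssm\{2\}$ (or its analogue) and reaching a contradiction. Your approach bypasses this case analysis entirely: since every reversed edge of $\M$ inserts an element of $\{1,2\}$, summing characteristic vectors around the closed walk forces every deleted element to lie in $\{1,2\}$ as well, so $T\cap X$ is constant on $V(\C)$ and the cycle lives in a four-element poset --- incompatible with the six-vertex lower bound from \cref{l:cycle}. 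This balance-and-pigeonhole argument is shorter and would adapt more readily to other matchings whose edges only touch a small fixed set of indices.
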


\begin{proof} 
 To see~(i), suppose on the contrary that $G_I^\M[V(\M)]$ has a directed cycle, say $\C$. Then $\C$ contains a path of length five shown in \cref{f:main2} (left) (see \cref{l:cycle}), where $j_1,j_3,j_5 \in \{1,2\}$, $j_2\notin \{j_1,j_3\}$, $j_4\notin \{j_3,j_5\}$, $ j_3\notin \{j_1, j_5\}$, and $T\neq [n+2]$.
\begin{figure}[H]
\begin{tikzpicture}
\node [draw, circle, fill=white, inner sep=1pt, label=below:{\tiny $T\ssm\{j_1\}$}] (00) at (0,0) {};		
\node [draw, circle, fill=white, inner sep=1pt, label=below:{\tiny $T\ssm \{j_2\}$}] (30) at (2.0,0) {};		
\node [draw, circle, fill=white, inner sep=1pt, label=below:{\tiny $(T \cup \{j_3\})\ssm \{j_2,j_4\}$}] (40) at (5.0,0) {};
\node [draw, circle, fill=white, inner sep=1pt, label=above:{\tiny $T$}] (01) at (0,1) {};
\node [draw, circle, fill=white, inner sep=1pt, label=above:{\tiny $(T\cup \{j_3\})\ssm\{j_2\}$}] (31) at (2.0,1) {};
\node [draw, circle, fill=white, inner sep=1pt, label=above:{\tiny $(T \cup \{j_3,j_5\})\ssm \{j_2,j_4\}$}] (41) at (5.0,1) {};
\draw [color=red, line width=2pt, ->-] (00)--(01);
\draw [color=red, line width=2pt, ->-] (30)--(31);	
\draw [color=red, line width=2pt, ->-] (40)--(41);
\draw [->-] (01)--(30);
\draw [->-] (31)--(40);
\end{tikzpicture}
\begin{tikzpicture}
\node [draw, circle, fill=white, inner sep=1pt, label=below:{\tiny $T\ssm\{1\}$}] (00) at (0,0) {};		
\node [draw, circle, fill=white, inner sep=1pt, label=below:{\tiny $T\ssm \{j_2\}$}] (30) at (2.0,0) {};		
\node [draw, circle, fill=white, inner sep=1pt, label=below:{\tiny $(T \cup \{2\})\ssm \{j_2,j_4\}$}] (40) at (5,0) {};
\node [draw, circle, fill=white, inner sep=1pt, label=above:{\tiny $T$}] (01) at (0,1) {};
\node [draw, circle, fill=white, inner sep=1pt, label=above:{\tiny $(T\cup \{2\})\ssm\{j_2\}$}] (31) at (2.0,1) {};
\node [draw, circle, fill=white, inner sep=1pt, label=above:{\tiny $(T \cup \{1,2\})\ssm \{j_2,j_4\}$}] (41) at (5,1) {};
\draw [color=red, line width=2pt, ->-] (00)--(01);
\draw [color=red, line width=2pt, ->-] (30)--(31);	
\draw [color=red, line width=2pt, ->-] (40)--(41);
\draw [->-] (01)--(30);
\draw [->-] (31)--(40);
\end{tikzpicture}
\caption{}\label{f:main2}
\end{figure}
 It follows that $j_5=j_1$ and $j_3\notin T$. So $j_3\in
\comp{T}=[n+2]\ssm T$.

Assume that $j_1=j_5=1$ and $j_3=2$. Then $1\in T$ and $2\notin T$ (see \cref{f:main2} (right)). By \cref{l:mJj} and \eqref{eq:matching 1}--\eqref{eq:matching 4}, we have:
\begin{equation}\label{eq:cycle}
\begin{array}{rcl}
(T,T\ssm\{1\})\in\M &\Longrightarrow& A \subseteq T \Longrightarrow P_1  \subseteq T, \\
((T\cup \{2\})\ssm\{j_2\},T\ssm\{j_2\})\in\M &\Longrightarrow&\begin{cases}
P_2 \subseteq (T\cup \{2\})\ssm\{j_2\},\\
P_1 \not\subseteq (T\cup \{2\})\ssm\{j_2\}.
\end{cases}
\end{array}
\end{equation}
Thus 
\begin{align*}
P_1 \ssm \{j_2\} \subseteq T\ssm\{j_2\}
\qand
P_1 \not\subseteq T \ssm\{j_2\},
\end{align*} 
which implies that $j_2 \in P_1$.

Remind that $P_0 \cup P_1 \cup P_2=X=\{3,\ldots,n+2\},$ and that $P_0$, $P_1$, and $P_2$ are pairwise disjoint.  So, using \eqref{eq:cycle} and considering that $P_0 \cup P_1 \subseteq A \subseteq T$, we must have 
\[\comp{T}\ssm\{2\} \subseteq X \ssm (P_0 \cup P_1) \subseteq P_2
\qand 
(\comp{T}\cup\{j_2\})\ssm\{2\}  \subseteq X \ssm P_2 \subseteq P_0 \cup P_1.\] 
Therefore 
\[(\comp{T}\cup \{j_2\})\ssm \{2\} \subseteq (P_2 \cup \{j_2\}) \cap (P_0 \cup P_1)=(P_0 \cup P_1)\cap \{j_2\}=\{j_2\}.\]
It follows that $(\comp{T}\cup \{j_2\})\ssm\{2\}=\{j_2\}$, hence $\comp{T}=\{2\}$. Therefore,
$T=[n+2]\ssm \{2\}$. By \cref{l:mJj}, $((T \cup \{1,2\})\ssm \{j_2,j_4\},(T \cup \{2\})\ssm \{j_2,j_4\})\in\M$. Hence $P_1 \subseteq (T \cup \{1,2\}) \ssm \{j_2,j_4\}$, which implies that
\[(\comp{T}\cup \{j_2,j_4\})\ssm\{1,2\}\subseteq X \ssm P_1 \subseteq P_0\cup P_2.\]
But $\comp{T}=\{2\}$, so $\{j_2,j_4\}\subseteq P_0\cup P_2$, that is $j_2\in P_0\cup P_2,$ a contradiction since $j_2 \in P_1$.

If $j_1=2$, an analogous argument shows that $\comp{T}=\{1\}$. Thus $T=[n+2]\ssm\{1\}$, which is a contradiction (because by \eqref{eq:matching 1}--\eqref{eq:matching 4} we have $([n+2]\ssm\{1\},[n+2]\ssm\{1,2\})\notin \M$). This settles~(i).

(ii) Let $T, T'$ be two distinct $\M$-critical vertices of $G_I^\M$ with $\m_T=\m_{T'}$. Since $X=\{3,\ldots,n+2\}$, we observe by \eqref{eq:mJ2} that $T\cap X=T' \cap X$ and hence $T$ and $T'$ may only differ in their intersections with $\{1,2\}$. We check all of the possible scenarios.   
\begin{itemize}
\item[-] If $T\cap \{1,2\}=\varnothing$, then by \eqref{eq:mJ2} we   must have $X\ssm T=X\ssm T'= \varnothing$, and so $X \subseteq T \cap T'$. Thus $T=[n+2]$ is a vertex of $\M$, a contradiction.
\item[-] If $T\cap \{1,2\}=\{1\}$ and $T'\cap \{1,2\}=\{2\}$, then   by \eqref{eq:mJ2} for every $i$ with $i+2 \in X\ssm T=X\ssm T'$ we have   $a_i=b_i$, and hence $X \ssm T =X\ssm T' \subseteq P_0$.  Therefore, $P_1 \cup P_2 \subseteq T \cap T'$, and so $P_1 \subseteq T'$, which by \eqref{eq:matching 1}--\eqref{eq:matching 4} implies that $T'$ is a vertex of $\M$, a contradiction.
\item[-] If $T\cap \{1,2\}=\{1,2\}$ and $T'\cap \{1,2\}=\{1\}$,   then by \eqref{eq:mJ2} for every $i+2 \in X\ssm T=X\ssm T'$ we have   $\max\{a_i,b_i\}=a_i$, which means $X \ssm T =X\ssm T' \subseteq P_0 \cup P_1$. This implies that $P_2 \cup \{1,2\}   \subseteq T$. Hence by \eqref{eq:matching 1}--\eqref{eq:matching 4}, $T$ is a vertex of $\M$, a contradiction.
\end{itemize}
\end{proof}
\begin{example}\label{exp11}
Let $I=\gen{{x_1}{x_2}, {x_1}{x_3}, x_1^{2}, x_2^{2}, x_3^{2}}$ be as in
\cref{ex:1} and $\M$ be the BW-matching obtained from \eqref{eq:matching 1}--\eqref{eq:matching 4} as it is shown in \cref{figure2}.
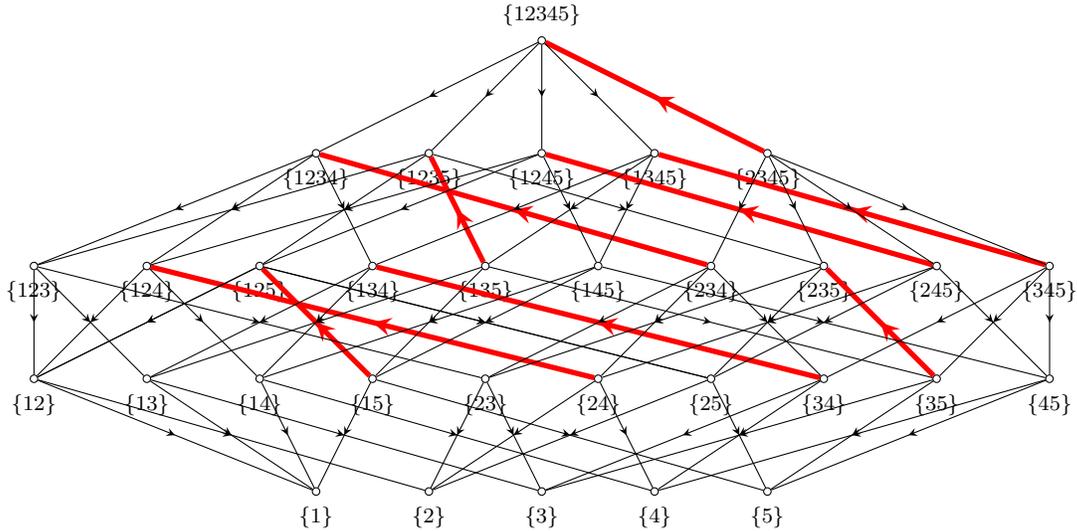
\begin{figure}[H]
\centering
\begin{tikzpicture}[scale=1.5]
\node [draw, circle, fill=white, inner sep=1pt, label=below:{\tiny{${\{12\}}$}}] (12) at (0,1) {};
\node [draw, circle, fill=white, inner sep=1pt, label=below:{\tiny{${\{13\}}$}}] (13) at (1,1) {};
\node [draw, circle, fill=white, inner sep=1pt, label=below:{\tiny{${\{14\}}$}}] (14) at (2,1) {};
\node [draw, circle, fill=white, inner sep=1pt, label=below:{\tiny{${\{15\}}$}}] (15) at (3,1) {};
\node [draw, circle, fill=white, inner sep=1pt, label=below:{\tiny{${\{23\}}$}}] (23) at (4,1) {};
\node [draw, circle, fill=white, inner sep=1pt, label=below:{\tiny{${\{24\}}$}}] (24) at (5,1) {};
\node [draw, circle, fill=white, inner sep=1pt, label=below:{\tiny{${\{25\}}$}}] (25) at (6,1) {};
\node [draw, circle, fill=white, inner sep=1pt, label=below:{\tiny{${\{34\}}$}}] (34) at (7,1) {};
\node [draw, circle, fill=white, inner sep=1pt, label=below:{\tiny{${\{35\}}$}}] (35) at (8,1) {};
\node [draw, circle, fill=white, inner sep=1pt, label=below:{\tiny{${\{45\}}$}}] (45) at (9,1) {};
\node [draw, circle, fill=white, inner sep=1pt, label=below:{\tiny{${\{1\}}$}}] (1) at (2.5,0) {};
\node [draw, circle, fill=white, inner sep=1pt, label=below:{\tiny{${\{2\}}$}}] (2) at (3.5,0) {};
\node [draw, circle, fill=white, inner sep=1pt, label=below:{\tiny{${\{3\}}$}}] (3) at (4.5,0) {};
\node [draw, circle, fill=white, inner sep=1pt, label=below:{\tiny{${\{4\}}$}}] (4) at (5.5,0) {};
\node [draw, circle, fill=white, inner sep=1pt, label=below:{\tiny{${\{5\}}$}}] (5) at (6.5,0) {};
\node [draw, circle, fill=white, inner sep=1pt, label=below:{\tiny{${\{123\}}$}}] (123) at (0,2) {};
\node [draw, circle, fill=white, inner sep=1pt, label=below:{\tiny{${\{124\}}$}}] (124) at (1,2) {};
\node [draw, circle, fill=white, inner sep=1pt, label=below:{\tiny{${\{125\}}$}}] (125) at (2,2) {};
\node [draw, circle, fill=white, inner sep=1pt, label=below:{\tiny{${\{134\}}$}}] (134) at (3,2) {};
\node [draw, circle, fill=white, inner sep=1pt, label=below:{\tiny{${\{135\}}$}}] (135) at (4,2) {};
\node [draw, circle, fill=white, inner sep=1pt, label=below:{\tiny{${\{145\}}$}}] (145) at (5,2) {};
\node [draw, circle, fill=white, inner sep=1pt, label=below:{\tiny{${\{234\}}$}}] (234) at (6,2) {};
\node [draw, circle, fill=white, inner sep=1pt, label=below:{\tiny{${\{235\}}$}}] (235) at (7,2) {};
\node [draw, circle, fill=white, inner sep=1pt,label=below:{\tiny{${\{245\}}$}}] (245) at (8,2) {};
\node [draw, circle, fill=white, inner sep=1pt,label=below:{\tiny{${\{345\}}$}}] (345) at (9,2) {};
\node [draw, circle, fill=white, inner sep=1pt,label=below:{\tiny{${\{1234\}}$}}] (1234) at (2.5,3) {};
\node [draw, circle, fill=white, inner sep=1pt, label=below:{\tiny{${\{1235\}}$}}] (1235) at (3.5,3) {};
\node [draw, circle, fill=white, inner sep=1pt, label=below:{\tiny{${\{1245\}}$}}] (1245) at (4.5,3) {};
\node [draw, circle, fill=white, inner sep=1pt, label=below:{\tiny{${\{1345\}}$}}] (1345) at (5.5,3) {};
\node [draw, circle, fill=white, inner sep=1pt, label=below:{\tiny{${\{2345}\}$}}] (2345) at (6.5,3) {};
\node [draw, circle, fill=white, inner sep=1pt, label=above:{\tiny{${\{12345\}}$}}] (12345) at (4.5,4) {};

\draw [color=red,line width=2pt, ->-] (2345)--(12345);
\draw [color=red,line width=2pt, ->-] (135)--(1235);
\draw [color=red,line width=2pt, ->-] (245)--(1245);
\draw [color=red,line width=2pt, ->-] (345)--(1345);
\draw [color=red,line width=2pt, ->-] (234)--(1234);
\draw [color=red,line width=2pt, ->-] (24)--(124);
\draw [color=red,line width=2pt, ->-] (15)--(125);
\draw [color=red,line width=2pt, ->-] (34)--(134);
\draw [color=red,line width=2pt, ->-] (35)--(235);
\draw [->-] (12345)--(1235);
\draw [->-] (12345)--(1245);
\draw [->-] (12345)--(1345);
\draw [->-] (12345)--(1234);
\draw [->-] (1234)--(123);
\draw [->-] (1234)--(124);
\draw [->-] (1234)--(134);
\draw [->-] (1235)--(125);
\draw [->-] (1235)--(235);
\draw [->-] (1235)--(123);
\draw [->-] (1245)--(124);
\draw [->-] (1245)--(125);
\draw [->-] (1245)--(145);
\draw [->-] (1345)--(135);
\draw [->-] (1345)--(145);
\draw [->-] (1345)--(134);
\draw [->-] (2345)--(235);
\draw [->-] (2345)--(245);
\draw [->-] (2345)--(345);
\draw [->-] (2345)--(234);
\draw [->-] (123)--(12);
\draw [->-] (123)--(13);
\draw [->-] (123)--(23);
\draw [->-] (124)--(14);
\draw [->-] (124)--(12);
\draw [->-] (125)--(12);
\draw [->-] (125)--(25);
\draw [->-] (134)--(13);
\draw [->-] (134)--(14);
\draw [->-] (135)--(15);
\draw [->-] (135)--(35);
\draw [->-] (135)--(13);
\draw [->-] (145)--(14);
\draw [->-] (145)--(15);
\draw [->-] (145)--(45);
\draw [->-] (234)--(23);
\draw [->-] (234)--(24);
\draw [->-] (234)--(34);
\draw [->-] (235)--(23);
\draw [->-] (235)--(25);
\draw [->-] (245)--(24);
\draw [->-] (245)--(45);
\draw [->-] (245)--(25);
\draw [->-] (125)--(25);
\draw [->-] (125)--(12);
\draw [->-] (345)--(35);
\draw [->-] (345)--(45);
\draw [->-] (345)--(34);
\draw [->-] (12)--(1);
\draw [->-] (12)--(2);
\draw [->-] (13)--(1);
\draw [->-] (13)--(3);
\draw [->-] (14)--(1);
\draw [->-] (14)--(4);
\draw [->-] (15)--(1);
\draw [->-] (15)--(5);
\draw [->-] (23)--(2);
\draw [->-] (23)--(3);
\draw [->-] (24)--(2);
\draw [->-] (24)--(4);
\draw [->-] (25)--(2);
\draw [->-] (25)--(5);
\draw [->-] (34)--(3);
\draw [->-] (34)--(4);
\draw [->-] (35)--(3);
\draw [->-] (35)--(5);
\draw [->-] (45)--(5);
\draw [->-] (45)--(4);
\end{tikzpicture}
\caption{The lcm lattice of $I=\gen{{x_1}{x_2}, {x_1}{x_3}, x_1^{2}, x_2^{2}, x_3^{2}}$}\label{figure2}
\end{figure} 
By \cref{p:matching}, the $\M$-critical vertices of $G_I^\M$ are
\begin{equation*}
\begin{matrix}
{\{145\}}  &  {\{123\}} &  {\{12\}} &    {\{13\}} \\ 
{\{14\}} &  {\{23\}} &
{\{25\}}  & {\{45\}} \\ 
{\{1\}} &
{\{2\}}  &{\{3\}} &  {\{4\}} &  {\{5\}}
\end{matrix} 
\end{equation*}
Therefore, by \cref{t:main2}, there is a CW-complex with exactly five vertices, six 1-dimensional and two 2-dimensional cells which supports a minimal free resolution of $I$. To describe this complex, we use \cref{directed path} and \cref{t:main2}. Recall that for subsets $T,T' \subseteq [n+2]$ with $|T|=|T'|+1$, we have:
\begin{equation*}
\sigma_{T'} \preceq \sigma_T
\iff \begin{cases}
T' \subseteq T \qor \\\\
\text{there exists a directed path from  $T''$ to $T'$ in $G_I^\M$}\\
\text{for some $T'' \subseteq T$ with  $|T''|=|T'|$.}
\end{cases}
\end{equation*}  
For example, if we consider $\sigma_{\{145\}}$, then we have $\sigma_{T'} \preceq \sigma_T$ for all $\M$-critical $T'\subseteq \{145\}$ with $|T'|=2$. Therefore, $\sigma_{\{14\}} \preceq \sigma_{\{145\}}$ and $\sigma_{\{45\}} \preceq \sigma_{\{145\}}$. Moreover, from the following two directed paths starting from  $T''=\{15\}\subseteq \{145\}$, it follows that 
$\sigma_{\{25\}}\preceq\sigma_{\{145\}}$ and $\sigma_{\{12\}}\preceq\sigma_{\{145\}}$.
\[\begin{tiny}
\xymatrix@=2em{
&\{125\}\ar[dr]&\\
\{15\}\ar[ur]&&\{25\}
}
\hspace{3cm}
\xymatrix@=2em{
&\{125\}\ar[dr]&\\
\{15\}\ar[ur]&&\{12\}
}
\end{tiny}
\]
Notice that $\sigma_{\{23\}}\not\preceq\sigma_{\{145\}}$ since $\{23\}$ is not in a directed path starting from a subset of $\{145\}$. 

Following the same argument, we see that the only CW-complex that supports the minimal free resolution of $I$ is the one in \cref{f:exp11}.
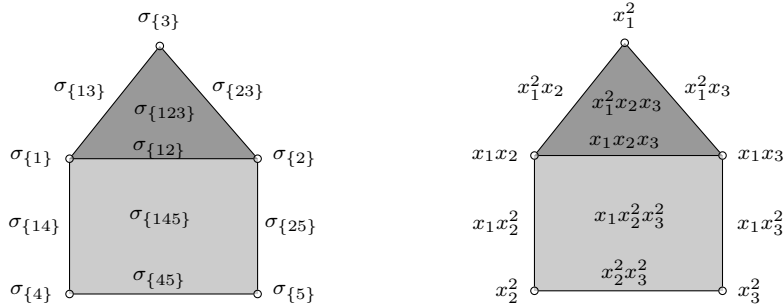
\begin{figure}
\begin{tabular}{ccccccc}
\begin{tikzpicture}
\node [draw, circle, fill=white, inner sep=1pt, label=left:{\tiny{$\sigma_{\{4\}}$}}] (00) at (0,0) {};
\node [draw, circle, fill=white, inner sep=1pt, label=right:{\tiny{$\sigma_{\{5\}}$}}] (20) at (2.5,0) {};
\node [draw, circle, fill=white, inner sep=1pt, label=left:{\tiny{$\sigma_{\{1\}}$}}] (01) at (0,1.8) {};
\node [draw, circle, fill=white, inner sep=1pt, label=right:{\tiny{$\sigma_{\{2\}}$}}] (21) at (2.5,1.8) {};
\node [draw, circle, fill=white, inner sep=1pt, label=above:{\tiny{$\sigma_{\{3\}}$}}] (12) at (1.2,3.3) {};
\node[label=below:{\tiny{$\sigma_{\{45\}}$}}] at (1.2,0.6) {};
\node [label=above:{\tiny{$\sigma_{\{12\}}$}}] at (1.2,1.5) {};
\node [label=left:{\tiny{$\sigma_{\{14\}}$}}]  at (0.2,0.9) {};
\node [label=right:{\tiny{$\sigma_{\{25\}}$}}]  at (2.3,0.9) {};
\node [label=right:{\tiny{$\sigma_{\{145\}}$}}]  at (0.5,1) {};
\node [label=right:{\tiny{$\sigma_{\{23\}}$}}]  at (1.6,2.7) {};
\node [label=left:{\tiny{$\sigma_{\{13\}}$}}]  at (0.8,2.7) {};
\node [label=left:{\tiny{$\sigma_{\{123\}}$}}]  at (2,2.4) {};
	\draw (00)--(20);
	\draw (00)--(01);
	\draw (20)--(21);
	\draw (01)--(21);
	\draw (01)--(12);
	\draw (21)--(12);
	\fill[fill opacity=0.2] (0,0)--(2.5,0)--(2.5,1.8)--(0,1.8)-- cycle;
	\fill[fill opacity=0.4] (0,1.8)--(2.5,1.8)--(1.2,3.3)-- cycle;
\end{tikzpicture}&&&&&
\begin{tikzpicture}
\node [draw, circle, fill=white, inner sep=1pt, label=left:{\tiny{$x_2^2$}}] (00) at (0,0) {};
\node [draw, circle, fill=white, inner sep=1pt, label=right:{\tiny{$x_3^2$}}] (20) at (2.5,0) {};
\node [draw, circle, fill=white, inner sep=1pt, label=left:{\tiny{$x_1x_2$}}] (01) at (0,1.8) {};
\node [draw, circle, fill=white, inner sep=1pt, label=right:{\tiny{$x_1x_3$}}] (21) at (2.5,1.8) {};
\node [draw, circle, fill=white, inner sep=1pt, label=above:{\tiny{$x_1^2$}}] (12) at (1.2,3.3) {};
\node [label=below:{\tiny{$x_2^2x_3^2$}}]  at (1.2,0.7) {};
\node [label=above:{\tiny{$x_1x_2x_3$}}] at (1.2,1.6) {};
\node [label=left:{\tiny{$x_1x_2^2$}}]  at (0.1,0.9) {};
\node [label=right:{\tiny{$x_1x_3^2$}}]  at (2.4,0.9) {};
\node [label=right:{\tiny{$x_1x_2^2x_3^2$}}]  at (0.5,1) {};
\node [label=right:{\tiny{$x_1^2x_3$}}]  at (1.7,2.7) {};
\node [label=left:{\tiny{$x_1^2x_2$}}]  at (0.7,2.7) {};
\node [label=left:{\tiny{$x_1^2x_2x_3$}}]  at (2,2.5) {};
\draw (00)--(20);
\draw (00)--(01);
\draw (20)--(21);
\draw (01)--(21);
\draw (01)--(12);
\draw (21)--(12);
\fill[fill opacity=0.2] (0,0)--(2.5,0)--(2.5,1.8)--(0,1.8)-- cycle;
\fill[fill opacity=0.4] (0,1.8)--(2.5,1.8)--(1.2,3.3)-- cycle;
\end{tikzpicture}	
\end{tabular}\caption{CW-complex supporting $I=\gen{{x_1}{x_2}, {x_1}{x_3}, x_1^{2}, x_2^{2}, x_3^{2}}$ }\label{f:exp11}
\end{figure}
For being minimal free resolution, observe that $\m_{T'}\neq \m_T$ when $\sigma_{T'} \preceq \sigma_T$  (see \cref{f:exp11} (right)).
\end{example}
\section{Some algebraic invariants}\label{s:algebraic invariants}
We now use the results in \cref{s:2-gen} to study algebraic invariant of the monomial ideal 
\begin{equation*}
I=(u_1, u_2, {x_1}^{e_1},\ldots,{x_n}^{e_n})\qwhere u_{1}=x_1^{a_1}\cdots x_n^{a_n},\quad u_{2}=x_1^{b_1}\cdots x_n^{b_n}.
\end{equation*}

In \cref{c:betti}, we compute the multigraded Betti numbers and the Cohen-Macaulay type of $S/I$. Also, we show, in \cref{t:Scarf}, that if $u_1$ and $u_2$ have disjoints support, then $I$ has a minimal free resolution supported on its Scarf complex. Finally, in \cref{c:level}, we characterize the conditions under which $S/I$ is a level algebra.

Recall that for a Cohen-Macaulay ideal $M$ of projective dimension $\rho$, the number $\beta_{\rho}(S/M)$ is called the {\bf Cohen-Macaulay type} of $S/M$.

\begin{corollary}\label{c:betti}
With the notation as in \eqref{r=2} and \eqref{eq:P1}--\eqref{eq:P2}, let $I=\gen{u_1, u_2, {x_1}^{e_1},\ldots, {x_n}^{e_n}}$ and $\M$ be the homogeneous matching obtained from \eqref{eq:matching 1}--\eqref{eq:matching 4}. Then
\[\beta_{i,\bu}(I)=\begin{cases}
1 & \text{if} \quad \bu \in \{\m_T \colon T \text{ is an $\M$-critical vertex of $G_I^\M$ with $|T|=i+1$}\},\\
0 &\text{otherwise}.
\end{cases}\]
In particular, the Cohen-Macaulay type of $S/I$ is $|A \cap B|+|P_1||P_2|$.
\end{corollary}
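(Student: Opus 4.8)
The argument has two independent pieces, matching the two displayed claims.

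\emph{The multigraded Betti numbers.} The plan is to read $\beta_{i,\bu}(I)$ directly off the cellular resolution produced by $\M$. By \cref{t:main2}, $\M$ is a BW-matching of $G_I$, so \cref{t:BW} provides a CW-complex $X_\M$ supporting a \emph{minimal} multigraded free resolution of $I$ whose $i$-cells $\sigma_T$ are indexed by the $\M$-critical vertices $T$ of $G_I$ with $|T|=i+1$, the cell $\sigma_T$ carrying the monomial label $\m_T$. Hence $\beta_{i,\bu}(I)$ equals the number of $\M$-critical vertices $T$ with $|T|=i+1$ and $\m_T=\bu$. By \cref{t:main2}(ii), distinct $\M$-critical vertices carry distinct monomial labels, so this count is $0$ or $1$, and it is $1$ exactly when $\bu=\m_T$ for the unique such $T$; this is the stated formula.

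\emph{The Cohen-Macaulay type.} Since $I$ is an Artinian reduction, $S/I$ is a nonzero finite-dimensional $\KK$-algebra, hence Cohen-Macaulay with $\mathrm{depth}(S/I)=0$, so the Auslander-Buchsbaum formula gives $\mathrm{pd}_S(S/I)=n$. Consequently the Cohen-Macaulay type is $\beta_n(S/I)=\beta_{n-1}(I)$, which by the first part equals the number of $\M$-critical vertices $T\subseteq[n+2]$ with $|T|=n$, or equivalently the number of two-element subsets $\comp{T}\subseteq[n+2]$ whose complement $T$ satisfies one of the five conditions in \cref{p:matching}. I would carry out this count by splitting on $\comp{T}\cap\{1,2\}$, using throughout the partition $X=\{3,\dots,n+2\}=P_0\sqcup P_1\sqcup P_2$ together with $P_0\cup P_1\subseteq A$ and $P_0\cup P_2\subseteq B$. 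When $\comp{T}=\{1,2\}$ we get $T=X\supseteq A\cup B$, which violates the last condition of \cref{p:matching}, so nothing is contributed. When $\comp{T}=\{1,q\}$ with $q\in X$ (so $1\notin T$, $2\in T$), the fourth condition is equivalent to $q\in P_1\cap B$, contributing $|P_1\cap B|$. When $\comp{T}=\{2,q\}$ with $q\in X$ (so $1\in T$, $2\notin T$), the second condition is equivalent to $q\in P_0$ and the third to $q\in P_2\cap A$, and these cases are disjoint since $P_0\cap P_2=\varnothing$, contributing $|P_0|+|P_2\cap A|$. When $\comp{T}=\{p,q\}\subseteq X$ (so $1,2\in T$), the first condition holds iff $\comp{T}$ meets both $P_1$ and $P_2$, i.e.\ iff one of $p,q$ lies in $P_1$ and the other in $P_2$, contributing $|P_1|\,|P_2|$. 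Adding these up, the Cohen-Macaulay type equals $|P_1|\,|P_2|+|P_0|+|P_1\cap B|+|P_2\cap A|$.

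Finally I would identify $|P_0|+|P_1\cap B|+|P_2\cap A|$ with $|A\cap B|$. Partitioning $A\cap B$ along $X=P_0\sqcup P_1\sqcup P_2$: on $P_0$ one has $a_i=b_i$, and the standing hypothesis $a_i+b_i>0$ forces $a_i=b_i>0$, so $P_0\subseteq A\cap B$; on $P_1$ one has $a_i>b_i\ge 0$, hence $P_1\subseteq A$ and $A\cap B\cap P_1=B\cap P_1$; and symmetrically $A\cap B\cap P_2=A\cap P_2$. Therefore $|A\cap B|=|P_0|+|P_1\cap B|+|P_2\cap A|$, and the Cohen-Macaulay type is $|A\cap B|+|P_1|\,|P_2|$, as claimed. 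The only real work is keeping the four-case count above organized; the one point to watch is that the hypothesis $a_i+b_i>0$ is precisely what guarantees $P_0\subseteq A\cap B$, which is what makes the final identification clean.
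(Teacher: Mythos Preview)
Your proof is correct and follows essentially the same approach as the paper. Both arguments read the multigraded Betti numbers off the Morse CW-complex via \cref{t:BW} and \cref{t:main2}(ii), and both compute the Cohen-Macaulay type by enumerating the $\M$-critical vertices of size $n$ according to the description in \cref{p:matching}; your case split on $\comp{T}\cap\{1,2\}$ is the same enumeration the paper carries out, and your identification $|A\cap B|=|P_0|+|P_1\cap B|+|P_2\cap A|$ (using $a_i+b_i>0$ to secure $P_0\subseteq A\cap B$) is exactly the step the paper compresses into the line $|(P_1\cap B)\cup(P_2\cap A)\cup P_0|=|A\cap B|$.
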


\begin{proof}
First of all note that $\dim(S/I)=0$ so $\mathrm{depth}(S/I)=0$. Thus $S/I$ is Cohen-Macaulay. Let  $\Delta$  be the CW-complex supporting the minimal free resolution of $I$. Then
\[\beta_{i,\bu}(I)=\mbox{ number of $i$-faces of $\Delta$ labeled with the monomial } \bu.\]
By definition of $\beta_{i,\bu}(I)$, we have $\beta_{i,\bu}(I)=t$ if and only if there exist $T_1,\ldots,T_t$ of $\M$-critical vertices of $G_I^\M$ with $|T_j|=i+1$ for $j\in\{1,\ldots,t\}$, and $\m_{T_1}=\cdots=\m_{T_t}=\bu$. \cref{t:main2} implies that $\m_T\neq \m_{T'}$ for any two critical vertices $T\neq T'$. Thus
\[\beta_{i,\bu}(I)=
1 \qforall  \bu \in \{\m_T \colon T \text{ is an $\M$-critical vertex of $G_I^\M$ with $|T|=i+1$}\}.\]
To obtain the Cohen-Macaulay type of $S/I$, we note that $\mathrm{projdim}(S/I)=n$, so in order to compute $\beta_{n}(S/I)$, we have to count the number of $\M$-critical vertices $T$ with $|T|=n$. By \cref{p:matching}, $\M$-critical vertices $T \subset [n+2]$ are of the following forms:
\[\begin{array}{lllllll}
1 \in T, & 2 \in T
, &P_1 \not\subseteq T,&
P_2 \not\subseteq T&\qor\\
1\in T, & 2 \notin T,
&P_1 \cup P_2 \subseteq T,& P_0
\nsubseteq T,&\qor \\
1\in T, &2 \notin T,
&P_2 \not\subseteq T,&
A \nsubseteq T,& \qor \\
1 \notin T, &2\in T,
&P_1 \not\subseteq T,& 
B \nsubseteq T,& \qor\\
1 \notin T, &2 \notin T,
&A \nsubseteq T,& 
B \nsubseteq T.
\end{array}\] 

So, $\M$-critical vertices of size $n$ are exactly one of the following three forms:
\begin{itemize}
\item For any pair $(a,b)\in P_1\times P_2$, $T=[n+2]\ssm \{a,b\}$ is an $\M$-critical vertex.
\item For any $a\in P_1 \cap B$, $T=[n+2]\ssm \{a,1\}$ is an $\M$-critical vertex.
\item For any $a\in (P_2 \cap A) \cup P_0$, $T=[n+2]\ssm \{a,2\}$ is an $\M$-critical vertex.
\end{itemize}
The above discussion shows that 
\begin{align*} 
\beta_{n}(S/I)&=|P_1||P_2|+|P_1 \cap B|+|(P_2 \cap A) \cup P_0|\\
 &=|P_1||P_2|+|(P_1 \cap B) \cup (P_2 \cap A)\cup P_0|\\
 &=|P_1||P_2|+|A \cap B|.\qedhere
\end{align*}
\end{proof}

\begin{remark}\label{r:remark}
\cref{c:betti} is not necessarily true when $I$ is an Artinian reduction of a monomial ideal with more than two generators. For example, let
	\[I=(x_{1}^2x_{3}, x_1x_2x_3x_4, x_{1}^2x_{2}x_{4}, x_{1}^3, x_{2}^2, x_{3}^2, x_{4}^2)\] 
and $\M$ be any acyclic homogeneous matching. Since
\[\m_{\{1235\}}=\m_{\{125\}}=\m_{\{135\}}=\m_{\{235\}}=x_{1}^2x_{2}^2x_{3}x_{4},\]
there are distinct vertices $u,v\in\{{\{125\}},{\{135\}},{\{235\}}\}$ such that $u,v\notin V(\M)$. Hence $u$ and $v$ are $\M$-critical vertices of $G_I^\M$. It follows that $\beta_{2,x_{1}^2x_{2}^2x_{3}x_{4}}(I)>1$ (see \cref{f:remark}).

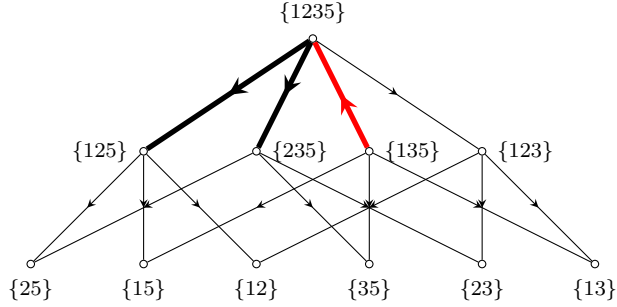
\begin{figure}
\begin{tikzpicture}[scale=1.5]
\node [draw, circle, fill=white, inner sep=1pt, label=below:{\tiny{${\{25\}}$}}] (25) at (0,0) {};
\node [draw, circle, fill=white, inner sep=1pt, label=below:{\tiny{${\{15\}}$}}] (26) at (1,0) {};
\node [draw, circle, fill=white, inner sep=1pt, label=below:{\tiny{${\{12\}}$}}] (56) at (2,0) {};
\node [draw, circle, fill=white, inner sep=1pt, label=below:{\tiny{${\{35\}}$}}] (27) at (3,0) {};
\node [draw, circle, fill=white, inner sep=1pt, label=below:{\tiny{${\{23\}}$}}] (57) at (4,0) {};
\node [draw, circle, fill=white, inner sep=1pt, label=below:{\tiny{${\{13\}}$}}] (67) at (5,0) {};
\node [draw, circle, fill=white, inner sep=1pt, label=left:{\tiny{${\{125\}}$}}] (256) at (1,1) {};
\node [draw, circle, fill=white, inner sep=1pt, label=right:{\tiny{${\{235\}}$}}] (257) at (2,1) {};
\node [draw, circle, fill=white, inner sep=1pt, label=right:{\tiny{${\{135\}}$}}] (267) at (3,1) {};
\node [draw, circle, fill=white, inner sep=1pt, label=right:{\tiny{${\{123\}}$}}] (567) at (4,1) {};
\node [draw, circle, fill=white, inner sep=1pt, label=above:{\tiny{${\{1235\}}$}}] (2567) at (2.5,2) {};

\draw [line width=2pt, ->-] (2567)--(256);
\draw [line width=2pt, ->-] (2567)--(257);
\draw [ color=red, line width=2pt, ->-] (267)--(2567);
\draw [->-] (2567)--(567);
\draw [->-] (256)--(25);
\draw [->-] (256)--(26);
\draw [->-] (256)--(56);
\draw [->-] (257)--(25);
\draw [->-] (257)--(27);
\draw [->-] (257)--(57);
\draw [->-] (267)--(26);
\draw [->-] (267)--(27);
\draw [->-] (267)--(67);
\draw [->-] (567)--(56);
\draw [->-] (567)--(57);
\draw [->-] (567)--(67);
\end{tikzpicture}
\caption{Partial lcm lattice of $I=(x_{1}^2x_{3}, x_1x_2x_3x_4, x_{1}^2x_{2}x_{4}, x_{1}^3, x_{2}^2, x_{3}^2, x_{4}^2)$}\label{f:remark}
\end{figure}
\end{remark}
It is known from \cite{P} that any multigraded free resolution of $I$ contains the Scarf multigraded resolution of $I$, but the Scarf complex is too small to support a resolution itself in general. In the next theorem we show if $A\cap B=\varnothing$ then $\mathrm{Scarf}(I)$ supports the minimal free resolution of $I$. In this case, in order to explain $\mathrm{Scarf}(I)$, we introduce $d$-skeleton.

The {\bf $d$-skeleton} $\Delta^d$ of a simplicial complex $\Delta$ is the subcomplex of $\Delta$ consisting of those faces of $\Delta$ having dimension at most $d$. In other words, the $d$-skeleton of $\Delta$ is the subcomplex
\[\Delta^d = \{\sigma \in\Delta\colon |\sigma|\leq d+1\}.\]

\begin{theorem}\label{t:Scarf}
Let $I=(u_1, u_2, {x_1}^{e_1},\ldots, {x_n}^{e_n})$ be as in \eqref{r=2} and \eqref{eq:P1}--\eqref{eq:P2}, and let $\M$ be as in \eqref{eq:matching 1}--\eqref{eq:matching 4}.  If $A \cap B=\varnothing$, then the set of $\M$-critical vertices of $G_I$ forms a simplicial complex. In this case,
\[\mathrm{Scarf}(I)=\facets{P_1}^{|P_1|-2}\star\facets{P_2}^{|P_2|-2}\star\facets{\{1, 2\}} \]
supports the minimal free resolution of $I$. In particular, 
\[\beta_i(I)=\sum_{\substack{a+b+c=i+1\\a<|P_1|,\ b<|P_2|}}\binom{|P_1|}{a}\binom{|P_2|}{b}\binom{2}{c}.\]
\end{theorem}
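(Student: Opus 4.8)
The strategy is to read off the $\M$-critical vertices from \cref{p:matching} under the extra hypothesis $A\cap B=\varnothing$, to recognize the resulting family of subsets as the asserted join of skeletons, and then to identify that family with $\mathrm{Scarf}(I)$; the Betti formula will come out as a face count. First I would note that $A\cap B=\varnothing$ forces $P_0=\varnothing$ (because $P_0\subseteq A\cap B$ by \eqref{eq:P2}), whence $A=P_1$, $B=P_2$, $X=\{3,\dots,n+2\}=P_1\sqcup P_2$, and---since $u_1,u_2$ are nonconstant---$P_1=A\neq\varnothing$ and $P_2=B\neq\varnothing$. Substituting $P_0=\varnothing$, $A=P_1$, $B=P_2$ into the five families of $\M$-critical vertices in \cref{p:matching} collapses them: the second family (with $1\in T$, $2\notin T$) becomes vacuous since $P_0\nsubseteq T$ can never hold, and each of the four remaining families---indexed by $T\cap\{1,2\}$---reduces to the single condition ``$P_1\nsubseteq T$ and $P_2\nsubseteq T$''. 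Hence the set $\W$ of $\M$-critical vertices is $\W=\{T\subseteq[n+2]\colon T\cap P_1\subsetneq P_1,\ T\cap P_2\subsetneq P_2\}$, which is closed under passing to subsets, so it is a simplicial complex; writing $T=(T\cap\{1,2\})\sqcup(T\cap P_1)\sqcup(T\cap P_2)$ and observing that the proper subsets of $P_i$ are exactly the faces of $\facets{P_i}^{|P_i|-2}$ shows $\W=\facets{P_1}^{|P_1|-2}\star\facets{P_2}^{|P_2|-2}\star\facets{\{1,2\}}$.

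Next I would prove $\W=\mathrm{Scarf}(I)$ by two containments, using \eqref{eq:mJ2} with the simplifications $a_i=0$ for $i+2\in P_2$, $b_i=0$ for $i+2\in P_1$, and $e_i>\max\{a_i,b_i\}>0$. For $\W\subseteq\mathrm{Scarf}(I)$: if $T\in\W$ and $\m_{T'}=\m_T$, then the coordinates at which $\m_T$ attains the exponent $e_i$ are precisely those of $T\cap X$, so $T'\cap X=T\cap X$; and since $T\cap P_1\subsetneq P_1$ one may choose $i_0+2\in P_1\ssm T$, whose exponent in $\m_T$ is $a_{i_0}>0$ if $1\in T$ and $0$ otherwise, which determines whether $1\in T'$, and symmetrically $2\in T'$ is determined by a leftover coordinate of $P_2$; thus $T'=T$, and $T$ is a Scarf face. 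For $\mathrm{Scarf}(I)\subseteq\W$: if $T\notin\W$, say $P_1\subseteq T$, then adjoining or deleting the element $1$ changes neither the exponents on $X$ (those indices all lie in $T$, contributing $e_i$) nor the exponents on $P_2$ (where $u_1$ contributes nothing), producing a different face with the same lcm, so $T\notin\mathrm{Scarf}(I)$; the case $P_2\subseteq T$ is symmetric.

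Finally I would assemble the pieces. By \cref{t:main2} and \cref{t:BW}, $I$ has a minimal free resolution supported on a CW-complex $X_\M$ whose $i$-cells biject with the $\M$-critical vertices of cardinality $i+1$; hence the minimal free resolution of $I$ has exactly $f_i(\W)=f_i(\mathrm{Scarf}(I))$ free generators in homological degree $i$. Since any multigraded free resolution of $I$ contains the multigraded Scarf complex resolution as a subcomplex \cite{P}, matching ranks in each homological degree forces that inclusion to be an equality, so $\mathrm{Scarf}(I)$ itself supports the minimal free resolution of $I$. The Betti formula is then a face count on the join: an $i$-dimensional face is a disjoint union of an $a$-subset of $P_1$ with $a<|P_1|$, a $b$-subset of $P_2$ with $b<|P_2|$, and a $c$-subset of $\{1,2\}$, subject to $(a-1)+(b-1)+(c-1)+2=i$, i.e.\ $a+b+c=i+1$, giving $\binom{|P_1|}{a}\binom{|P_2|}{b}\binom{2}{c}$ such faces. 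I expect the conceptual content to be light; the points requiring care are the last step's argument that it is the Scarf complex---not merely some equicardinal complex---that supports the resolution, and keeping the exponent bookkeeping in \eqref{eq:mJ2} straight across its four cases.
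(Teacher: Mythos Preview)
Your proof is correct, but it differs from the paper's in two noteworthy ways.

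First, the order is reversed. The paper begins by invoking an external result (\cite[Theorem~5.6]{GA}): since $A\cap B=\varnothing$ forces $P_0=\varnothing$, no variable appears with the same nonzero exponent in $u_1$ and $u_2$, and Alesandroni's theorem then says directly that $\mathrm{Scarf}(I)$ supports the minimal resolution. Only afterwards does the paper identify the critical vertices with $\mathrm{Scarf}(I)$, using \cref{t:main2}(ii) together with the uniqueness of Scarf labels. You instead compute the critical set $\W$ explicitly from \cref{p:matching}, prove $\W=\mathrm{Scarf}(I)$ by a hands-on label argument via \eqref{eq:mJ2}, and only at the end deduce that $\mathrm{Scarf}(I)$ supports the minimal resolution.

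Second, and relatedly, you avoid Alesandroni's theorem entirely: your last step uses only the fact (from~\cite{P}) that the Scarf chain complex is a subcomplex of the minimal free resolution, together with the rank equality $\beta_i(I)=f_i(\W)=f_i(\mathrm{Scarf}(I))$ coming from \cref{t:BW} and \cref{t:main2}. This makes your argument more self-contained within the paper's own framework. The paper's route is shorter once one accepts the black-box citation; yours trades that citation for a direct verification of $\W=\mathrm{Scarf}(I)$ and a rank-matching argument. Both reach the join-of-skeletons description and the Betti formula by the same face count.
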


\begin{proof}
Since $A \cap B=\varnothing$ we have $P_0=\varnothing$. That means no variable appears with the same nonzero exponent in both $u_1, u_2$. In this case, $\mathrm{Scarf}(I)$ supports a minimal free resolution of $I$ by \cite[Theorem 5.6]{GA}. Utilizing \cref{t:main2}, for any two $\M$-critical vertices $T, T' \in V(G_I^\M)$ we have $\m_T \neq \m_{T'}$, so the set of $\M$-critical vertices of $G_I^\M$ coincides with $\mathrm{Scarf}(I)$ by the definition of $\mathrm{Scarf}(I)$. It turns out that
\begin{equation}\label{delta=Scarf}
\mathrm{Scarf}(I)=\langle T \subset [n+2] \colon \quad T \text{ is an $\M$-critical vertex of } G_I^\M \rangle.
\end{equation}
Let $T$ be a facet of the simplicial complex on the right hand side of \eqref{delta=Scarf}. By \cref{p:matching}, we have $|T|=n$ and $\{1,2\}\subseteq T$. So \eqref{delta=Scarf} can be read as
\[\mathrm{Scarf}(I)=\langle [n+2]\ssm \{a,b\}\colon \; a\in P_1, b\in P_2 \rangle.\]
Now we just need to prove that
\[\langle [n+2]\ssm \{a,b\}\colon \; a\in P_1, b\in P_2\rangle=\langle P_1\rangle^{|P_1|-2}\star\langle P_2\rangle ^{|P_2|-2}\star\langle 12\rangle.\]
Let $U \in \langle P_1 \rangle^{|P_1|-2}$, $V \in \langle P_2 \rangle^{|P_2|-2}$, and $S\in \langle 12 \rangle$. Then there exist $a,b$ such that $a \in P_1\ssm U$ and $b \in P_2\ssm V$. So
\[S \cup U \cup V \subseteq \langle [n+2]\ssm \{a,b\}\colon \;  a\in P_1, b\in P_2 \rangle.\]
Conversely, let $a\in P_1, b\in P_2$, and $T'=[n+2]\ssm \{a,b\}$. Since $P_1 \cup P_2 \cup \{1,2\} =[n+2]$, it is evident that $T' \in \langle P_1\rangle^{|P_1|-2}\star\langle P_2\rangle ^{|P_2|-2}\star\langle 12\rangle$.

For the last part of the theorem, we know that
\[\beta_i(I)=\sum_{\substack{\bu}}\beta_{i,\bu}\]
where $\bu$ ranges over all monomials $\m_T$ such that $T$ is an $\M$-critical vertex of $G_I^\M$ with  $|T|=i+1$. So, to compute $\beta_{i}(I)$, we have to count the number of $\M$-critical vertices of size $i+1$ by \cref{c:betti}. Thus,
\[\beta_i(I)= \sum_{\substack{a+b+c=i+1\\a<|P_1|,\ b<|P_2|}}
\binom{|P_1|}{a}\binom{|P_2|}{b}\binom{2}{c},\]
as required.
\end{proof}

\begin{example}
Let $I=\gen{x_1x_2, x_3x_4, x_{1}^2, x_{2}^2, x_{3}^2, x_{4}^2}$. Then with notation as in \eqref{eq:P1}--\eqref{eq:P2}, we have
\begin{align*}
P_0&=\{\varnothing\},\quad P_1=\{3,4\},\quad P_2=\{5,6\}.
\end{align*} 
Let $\M$ be the BW-matching obtained from \eqref{eq:matching 1}--\eqref{eq:matching 4}. Then, by \cref{p:matching}, the $\M$-critical vertices of $G_I^\M$ are
\begin{equation*}
\begin{matrix}
{\{1235\}}  &  {\{1236\}} &  {\{1245\}} &    {\{1246\}} \\ 
{\{123\}} &  {\{124\}} &
{\{125\}}  & {\{126\}} &
{\{135\}}  & {\{136\}} \\ 
{\{145\}} &  {\{146\}} &
{\{235\}}  & {\{236\}} &
{\{245\}}  & {\{246\}} \\
{\{12\}} &
{\{13\}}  &{\{14\}} &  {\{15\}} &  {\{16\}} & {\{23\}}\\
{\{24\}} &
{\{25\}}  &{\{26\}} &  {\{35\}} &  {\{36\}} & {\{45\}} & {\{46\}}\\
{\{1\}} &
{\{2\}}  &{\{3\}} &  {\{4\}} &  {\{5\}} & {\{6\}}
\end{matrix} 
\end{equation*}
By \cref{t:main2}, there is a CW-complex with exactly six vertices, thirteen 1-dimensional, twelve 2-dimensional and four 3-dimensional cells which support a minimal free resolution of $I$. Clearly, this CW-complex is $\mathrm{Scarf}$ and
\[\mathrm{Scarf}(I)=\facets{\{3\},\{4\}}\star\facets{\{5\},\{6\}}\star\facets{\{1,2\}}.\]

\end{example}
\begin{remark}
The converse of \cref{t:Scarf} is not true. Indeed, if $I=\gen{x_{1}^2x_2, x_1x_{2}^2, x_{1}^3, x_{2}^3}$, one can easily check that
\[\mathrm{Scarf}(I)=\facets{\{12\},\{13\},\{24\}}\]
supports a minimal free resolution of $I$ but $A \cap B \neq \varnothing$.
\end{remark}

Let $R$ be an Artinian standard graded algebra. Then $R$ is called a {\bf level algebra} if there is exactly one shift in the last free module in a minimal free resolution of $R$. It turns out that if $I$ is a monomial ideal, then $S/I$ is a level algebra if the last module in the minimal free resolution of $S/I$ is of the form $S^{\alpha}(m)$ where $\alpha$ is a positive integer and $S^{\alpha}(m)$ is the free $R$-module with generators in multidegree $m$ where $m$ is a monomial label of the faces of the Taylor complex (\cite{Ger}).
\begin{corollary}\label{c:level} 
Let $I=(u_1, u_2, {x_1}^{e_1},\ldots, {x_n}^{e_n})$ be as in \eqref{r=2} and \eqref{eq:P1}--\eqref{eq:P2}, and let $\M$ be as in \eqref{eq:matching 1}--\eqref{eq:matching 4}. The algebra $S/I$ is a level algebra if and only if there exist constants $\alpha, \beta, \gamma$ such that
\begin{itemize}
\item[\rm (i)]$a_{i}-e_{i}=\alpha$, for all $i+2\in P_1$,
\item[\rm (ii)]$b_{i}-e_{i}=\beta$, for all $i+2\in P_2$,
\item[\rm (iii)]$a_{i}-e_{i}=b_{j}-e_{j}=\gamma$, for all $i+2\in (P_2 \cap A) \cup P_0$ and $j+2\in P_1 \cap B$,
\item[\rm (iv)]$\gamma=\alpha+\beta$.
\end{itemize}
\end{corollary}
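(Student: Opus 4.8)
The plan is to read the last free module of the minimal free resolution of $S/I$ off the BW-matching $\M$ of \cref{t:main2} together with the classification of $\M$-critical vertices in \cref{p:matching}, compute the degrees of its generators, and translate ``single shift'' into the four linear conditions. Since $I$ is Artinian, $\mathrm{depth}(S/I)=0$, so $\mathrm{projdim}(S/I)=n$ by the Auslander--Buchsbaum formula, and (exactly as in the proof of \cref{c:betti}) the last free module of the minimal multigraded resolution is $\bigoplus_T S(\m_T)$, where $T$ runs over the $\M$-critical vertices of $G_I^\M$ with $|T|=n$. By the description of level algebras recalled just before the statement, $S/I$ is level if and only if all these monomials $\m_T$ have the same total degree. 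So the corollary reduces to computing $\deg\m_T$ for each such $T$ and determining when these numbers coincide.

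First I would record the structural fact that $P_1\neq\varnothing$ and $P_2\neq\varnothing$: if $P_1=\varnothing$ then $a_i\le b_i$ for every $i$, whence $u_1\mid u_2$, contradicting that $\{u_1,u_2\}$ is a minimal generating set, and symmetrically $P_2=\varnothing$ would force $u_2\mid u_1$; this nonemptiness is what makes the ``only if'' direction work. Next, reading \cref{p:matching} as in the proof of \cref{c:betti}, the $\M$-critical vertices $T\subset[n+2]$ with $|T|=n$ fall into three families: $T=[n+2]\ssm\{a,b\}$ with $(a,b)\in P_1\times P_2$; $T=[n+2]\ssm\{a,1\}$ with $a\in P_1\cap B$; and $T=[n+2]\ssm\{a,2\}$ with $a\in(P_2\cap A)\cup P_0$. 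Writing $E=\sum_{i=1}^{n}e_i$ and evaluating \eqref{eq:mJ2} branch by branch (using that $X\ssm T$ may be replaced by $A\ssm T$ or $B\ssm T$, that $\max\{a_i,b_i\}$ equals $a_i$ on $P_1$ and $b_i$ on $P_2$, and that $a_i+b_i>0$ forces the deleted generator index to lie in $A$, resp.\ $B$, in the last two families), a short computation gives
\[
\begin{aligned}
\deg \m_T &= E + (a_p - e_p) + (b_q - e_q), && p+2\in P_1,\ q+2\in P_2,\ T=[n+2]\ssm\{p+2,q+2\};\\
\deg \m_T &= E + (b_p - e_p), && p+2\in P_1\cap B,\ T=[n+2]\ssm\{p+2,1\};\\
\deg \m_T &= E + (a_p - e_p), && p+2\in (P_2\cap A)\cup P_0,\ T=[n+2]\ssm\{p+2,2\}.
\end{aligned}
\]

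Now $S/I$ is level if and only if all of the quantities above are equal. For the ``only if'' direction, fix $q_0+2\in P_2$ (possible since $P_2\neq\varnothing$): letting $p+2$ range over $P_1$ in the first line shows $a_p-e_p$ is independent of $p$, so it equals a constant $\alpha$; symmetrically, fixing some $p_0+2\in P_1$, the quantity $b_q-e_q$ is constant on $P_2$, equal to some $\beta$. Since the first-family degrees are then constant equal to $E+\alpha+\beta$, setting $\gamma:=\alpha+\beta$ and comparing with the second and third families yields $b_p-e_p=\gamma$ for $p+2\in P_1\cap B$ and $a_p-e_p=\gamma$ for $p+2\in(P_2\cap A)\cup P_0$; together with $\gamma=\alpha+\beta$ these are precisely (i)--(iv). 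Conversely, given constants $\alpha,\beta,\gamma$ satisfying (i)--(iv), the three displayed expressions all collapse to $E+\gamma$ (using $\alpha+\beta=\gamma$ in the first), so the top free module has a single shift and $S/I$ is level.

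I expect the only real work to be bookkeeping: isolating the cardinality-$n$ vertices among the five families of \cref{p:matching}, and then evaluating the four-branch formula \eqref{eq:mJ2} on each of them without index-shift or sign errors — in particular checking that in the second and third families the deleted generator index really lies in $B$, resp.\ $A$, so the relevant exponent is $b_p$, resp.\ $a_p$, and not $e_p$. The conceptual step (level $\Leftrightarrow$ equal top-degrees $\Leftrightarrow$ the four linear conditions) is then immediate, the one essential input being $P_1,P_2\neq\varnothing$.
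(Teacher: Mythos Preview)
Your proof is correct and follows the same route as the paper: identify the $\M$-critical vertices of size $n$ via \cref{p:matching}, compute $\deg\m_T$ in each of the three families using \eqref{eq:mJ2}, and translate equality of those degrees into conditions (i)--(iv). Your explicit verification that $P_1,P_2\neq\varnothing$ (so the first family is nonempty and supplies the reference degree $E+\alpha+\beta$) is a cleaner way to anchor the ``only if'' direction than the paper's brief remark about the intersections $P_2\cap((P_2\cap A)\cup P_0)$ and $P_1\cap(P_1\cap B)$.
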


\begin{proof}
Let $\M$ be the BW-matching obtained from \eqref{eq:matching 1}--\eqref{eq:matching 4} and $T\subset [n+2]$ be an $\M$-critical vertex of $G_I^\M$. Then $|T|=n$ if and only if $T$ is of one of the following forms:
\begin{itemize}
\item For any pair $(i+2,j+2)\in P_1\times P_2$, the set $T=[n+2]\ssm \{i+2,j+2\}$ is an $\M$-critical vertex. In this case, we have
 \[\deg \m_{T}=\sum\limits^{n}_{\substack{k=1}}{e_k}+(a_{i}-e_{i})+(b_{j}-e_{j}).\]
\item  For any $i+2\in P_1 \cap B$, the set $T=[n+2]\ssm \{i+2,1\}$ is an $\M$-critical vertex. In this case, we have 
\[\deg \m_{T}=\sum\limits^{n}_{\substack{k=1}}{e_k}+(b_{i}-e_{i}).\]
\item For any $i+2\in (P_2 \cap A) \cup P_0$, the set $T=[n+2]\ssm \{i+2,2\}$ is an $\M$-critical vertex. In this case, we have
\[\deg \m_{T}=\sum\limits^{n}_{\substack{k=1}}{e_k}+(a_{i}-e_{i}).\]
\end{itemize}
If $S/I$ is a level algebra, then there exist constants $\alpha$, $\beta$, $\gamma$, and $\gamma'$ such that 
\begin{align*}
&a_{i}-e_{i}=\alpha \quad \text{ for all }i+2\in P_1,\\
&b_{i}-e_{i}=\beta \quad \text{ for all }i+2\in P_2,\\
&b_{i}-e_{i}=\gamma  \quad \text{ for all }i+2\in P_1 \cap B,\\
&a_{i}-e_{i}=\gamma' \quad \text{for all }i+2\in (P_2 \cap A) \cup P_0.
\end{align*}
Note that $P_2 \cap ((P_2 \cap A) \cup P_0)\neq \varnothing$ and $P_1 \cap (P_1 \cap B) \neq \varnothing$. Hence $\gamma'=\alpha+\beta=\gamma$. Conversely, if (i)--(iv) holds for some constants $\alpha, \beta$, and $\gamma$, then all $\M$-critical vertices of $G_I^\M$ with size $n$ have the same degrees by the above discussion. Thus $S/I$ is a level algebra. This completes the proof.
\end{proof}


  


\appendix
\section{Finding a homogeneous matching $\M$ in $G_I$}\label{appendix A}
In the following, we present an algorithm which enables us to describe $i$-faces of a CW-complex $\Delta$ supporting $I$, where $I$ is an Artinian reduction of a monomial ideal with two generators. Indeed, this algorithm yields a matching $\M$ such that the $\M$-critical vertices of $G_I$ of size $i$ are in one-to-one correspondence with $i$-faces of $\Delta$. For more information about this algorithm the reader is referred to \cite{Ghor}. To states our algorithm, we use the following definition.
\begin{definition} 
A subset $T \subseteq [n+2]$ is {\bf admissible set} if $T\cap \{1,2\}\neq \varnothing$ and
\begin{itemize}
\item[(i)] if $1\notin T$ then $P_0\cup P_2 \cup\{2\} \subseteq T$;
\item[(ii)] if $2\notin T$  then $P_0 \cup P_1 \cup \{1\} \subseteq T$.
\end{itemize}
\end{definition}

Let $\M$ be a homogeneous matching of $G_{I}$. If $(T,T\ssm{\{j\}})\in \M$, then since $\m_T=\m_{T\ssm\{j\}}$, we must have  $j\in \{1,2\}\cap T$. In the following lemma, we show that the possible candidates for $T$ such that $(T,T\ssm{\{j\}})\in \M$ are admissible subsets of $[n+2].$

\begin{lemma}\label{add} 
Let $T\subseteq [n+2]$ and $j\in T\cap\{1,2\}$. If  $\m_T=\m_{T\ssm\{j\}}$, then $T$ is admissible.
\end{lemma}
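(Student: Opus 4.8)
The plan is to reduce the statement directly to \cref{l:mJj}, which already classifies exactly when deleting the element $j\in\{1,2\}$ from $T$ leaves the label unchanged. First I would note that the hypothesis $j\in T\cap\{1,2\}$ immediately gives $T\cap\{1,2\}\neq\varnothing$, so the only thing to check is conditions (i) and (ii) in the definition of admissible, and each of these is only a requirement when $1\notin T$ (resp. $2\notin T$). So the whole proof splits into the two cases $j=1$ and $j=2$, and within each case into the two alternatives provided by \cref{l:mJj}.

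In the case $j=1$: by \cref{l:mJj}(1), $\m_T=\m_{T\ssm\{1\}}$ forces either $P_1\cup\{1,2\}\subseteq T$ or else $A\cup\{1\}\subseteq T$ with $2\notin T$. In the first sub-case $2\in T$, so both admissibility conditions (i) and (ii) are vacuous and $T$ is admissible. In the second sub-case $2\notin T$, so I only need to verify condition (ii), namely $P_0\cup P_1\cup\{1\}\subseteq T$; but this follows because $A\subseteq T$, $1\in T$, and $A\supseteq P_0\cup P_1$ by \eqref{eq:P2}. The case $j=2$ is the mirror image: \cref{l:mJj}(2) gives either $P_2\cup\{1,2\}\subseteq T$ (then $1\in T$, both conditions vacuous) or $B\cup\{2\}\subseteq T$ with $1\notin T$, in which case condition (i) holds since $B\supseteq P_0\cup P_2$ and $2\in T$.

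I do not expect any genuine obstacle here: the content is entirely packaged in \cref{l:mJj} together with the inclusions $A\supseteq P_0\cup P_1$ and $B\supseteq P_0\cup P_2$ recorded in \eqref{eq:P2}. The only point requiring a moment's care is the observation that whenever \cref{l:mJj} returns the "easy" alternative ($P_1\cup\{1,2\}\subseteq T$ or $P_2\cup\{1,2\}\subseteq T$) the element $\{1,2\}\ssm\{j\}$ also lies in $T$, so the corresponding admissibility clause is vacuously satisfied; this is what makes the case analysis close without further work.
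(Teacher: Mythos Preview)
Your proposal is correct and follows essentially the same approach as the paper: both arguments reduce the claim directly to \cref{l:mJj} together with the inclusions $P_0\cup P_1\subseteq A$ and $P_0\cup P_2\subseteq B$ from \eqref{eq:P2}. The paper organizes the case split slightly differently (first disposing of the case $\{1,2\}\subseteq T$ as trivially admissible, then handling $1\notin T$ and $2\notin T$), but the content is identical and your version is in fact more explicit.
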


\begin{proof}
If $\{1,2\}\subseteq T$, then $T$ is admissible by definition. If $1 \notin T$, then we must have $2 \in T$, and similarly if $2 \notin T$, we must have $1 \in T$. The rest now follows directly from 	\cref{l:mJj}.
\end{proof}

\begin{center}
\begin{algorithm}
	\begin{algorithmic}[1]\baselineskip=10pt\relax 

\REQUIRE Monomial ideal $I$
\ENSURE A homogeneous matching $\A$ in $G_I$
\STATE $\A'\longleftarrow \{([n+2],[n+2]\ssm\{1\})\}$.
\STATE $m'_{1}\longleftarrow \prod\limits_{i+2\in P_1}x_i^{a_{i}}$, $m'_{2}\longleftarrow \prod\limits_{i+2\in P_2}x_i^{b_{i}}$.
 \STATE $\T\longleftarrow2^{[n+2]} \ssm \{{[n+2]}\}$.
\STATE $k \longleftarrow n+1$.
\WHILE{$k \geq 3$}
\STATE $\T_k \longleftarrow \{T \in \T \colon \quad |T|=k\} \ssm V(\A')$. 
\WHILE{$\T_k \neq \varnothing$}
\STATE Choose $T \in \T_k$.
\IF{$T$ is admissible set}
\STATE $T^*\longleftarrow \comp{T}\cap\{1, 2\}$.
\IF{$T^*=\varnothing$}
\IF{$\gcd(\m_{\comp{T}}, m'_{1})=1$}
\STATE $\A'\longleftarrow \A' \cup \{(T,T \ssm \{1\})\}$.
\ELSE
\IF{$\gcd(\m_{\comp{T}}, m'_{2})=1$}
\STATE $\A'\longleftarrow \A' \cup \{(T,T \ssm \{2\})\}$.
\ENDIF
\ENDIF
\ELSE
\IF{$T^*=\{1\}$}
\STATE $\A'\longleftarrow \A' \cup \{(T,T \ssm \{2\})\}$.
\ENDIF
\IF{$T^*=\{2\}$}
\STATE $\A'\longleftarrow \A' \cup \{(T,T \ssm \{1\})\}$.
\ENDIF
\ENDIF
\ENDIF
\STATE $\T_k\longleftarrow \T_k \ssm \{T\}$.
\ENDWHILE
\STATE $k \longleftarrow k-1$.
\ENDWHILE
\RETURN $\A:=\{( T, T^*) \colon \quad ([n+2]\ssm T,[n+2]\ssm T^*) \in\A'\}$
	\end{algorithmic}
	\caption{\textsc{Finding a homogeneous matching $\mathcal{M}$ in $G_I$}}
	\label{alg:matching in G_I}
\end{algorithm}
\end{center}
\end{document}